\numberwithin{equation}{section}
\newtheorem{thm}{Theorem}[section]
\newtheorem{lem}[thm]{Lemma}
\newtheorem{prop}[thm]{Proposition}
\theoremstyle{remark}
\newtheorem{rem}{Remark}[section]
\title[Backward Uniqueness of extrinsic geometric flow]{Backward Uniqueness of Extrinsic Geometric Flow in general ambient manifolds}
\author{Dasong Li, John Man Shun Ma}
\date{\today}
\begin{document}

\begin{abstract}
    In this paper we prove two backward uniqueness theorems for extrinsic geometric flow of possibly non-compact hypersurfaces in general ambient complete Riemannian manifolds. These are applicable to a wide range of extrinsic geometric flow, including the mean curvature flow, inverse mean curvature flow, $\alpha$-Gauss curvature flow, $\sigma_k$ flow and so on. 
\end{abstract}
\maketitle

\section{Introduction}
Let $M$ be a smooth oriented $n$-manifold and $(\overline M, \bar g)$ a smooth oriented $(n+1)$-dimensional Riemannian manifold. A solution to an extrinsic geometric flow is a 1-parameter family $F: M\times[0,T]\rightarrow \overline M$ of $n$-dimensional immersed hypersurfaces in $\overline M$ which satisfies 
\begin{equation} \label{eqn extrinsic flow}
    \frac{\partial F}{\partial t}=-f\mathfrak n(x,t),\quad \text{ for all } x \in M^n, t \in [0,T].
\end{equation}
Here $\mathfrak n(p,t)$ denotes the unit normal vector of $M_t$ and $f$ is some curvature function 
\begin{equation} \label{eqn f = q (kappa)}
    f (x, t) = q(  \kappa (x, t)),
\end{equation}
where $\kappa (x, t) = (\kappa_1 (x, t), \cdots, \kappa_n (x, t))$ is the vector of principal curvatures of $F(\cdot, t)$ at $(x, t)$. Throughout this paper we assume that $q: \Omega \to \mathbb R$ is a smooth function which satisfies the following two properties: 
\begin{itemize}
    \item [(i)] Let $S_n$ be the permutation group of $\{1, \cdots, n\}$, which acts on $\mathbb R^n$ by permuting the coefficients. We assume that $\Omega$ is $S_n$-invariant and $q$ is symmetric on $\Omega$:
    \begin{equation*}
        q(\lambda_1, \cdots, \lambda_n)= q (\lambda_{\sigma(1)}, \cdots, \lambda_{\sigma(n)} ), \ \ \ \text{ for all } \sigma \in S_n, (\lambda_1, \cdots, \lambda_n)\in \Omega.
    \end{equation*}
    \item [(ii)] $q$ is monotone: that is 
    \begin{equation} \label{eqn dfn q is monotone}
    \frac{\partial q}{\partial \lambda_i} >0, \ \ i=1, \cdots, n
    \end{equation}
    on $\Omega$. Note that the monotonicity assumption ensures that the flow (\ref{eqn extrinsic flow}) is parabolic. 
\end{itemize} 

Examples of most popular extrinsic geometric flows include 
\begin{itemize}
    \item the mean curvature flow (MCF): $f = \kappa_1 + \cdots +\kappa_n$, 
    \item the inverse mean curvature flow (IMCF): $f = -(\kappa_1 + \cdots +\kappa_n)^{-1}$, 
    \item the $\alpha$-Gauss curvature flow ($\alpha$-GCF): $f = \operatorname{sgn}(\alpha)(\kappa_1\cdots \kappa_n)^\alpha$.
\end{itemize}
and so on. 

For the past 40 years, extrinsic geometric flow had been one of the crucial tools in the study of submanifolds. To name some of the applications, MCF was used by Ilmanen-White \cite{IlmanenWhite} to derive sharp lower bounds on density of area-minimizing cones, and by Buzano-Haslhofer-Hershkovits \cite{BHH} to prove the connectedness of the moduli space of two-convex embedded spheres. Huisken-Ilmanen constructed in \cite{HuiskenIlmanen} a weak notion of IMCF, and this is used to prove the Riemannian Penrose Inequality. Other applications of IMCF include the computation of Yamabe invariants by Bray-Neves \cite{BN}, and the new proof of Ricci pinching conjecture by Huisken-Koerber \cite{HuiskenKoerber}. In general, various extrinsic geometric flow are employed to prove a lot of geometric inequalities \cite{BM}, \cite{KwongYong},



When $M$ is compact, under the symmetry and monontonicity assumptions on $q$, (\ref{eqn extrinsic flow}) is equivalent up to diffeomorphism to a uniform parabolic scalar equation. Hence given any initial immersion $F_0 : M \to \overline M$, there exists a unique short time solution $F :M \times [0,\epsilon] \to \overline M$ to (\ref{eqn extrinsic flow}) with $F(\cdot, 0) = F_0(\cdot)$. This is well-documented in the literature, see e.g. Theorem 3.1 in Huisken-Polden's survey \cite{HP}, or Chapter 18 in Andrew-Chow-Guenther-Langford's recent monograph \cite{ACGL}. 

When $M$ is non-compact, both the existence and uniqueness of solutions to (\ref{eqn extrinsic flow}) are not direct consequences of the standard PDE theory. In the following we discuss some of the known existence and uniqueness results in the context of non-compact MCF and IMCF.

In MCF, Ecker-Huisken constructed in \cite{EH_entire_graph} long time solutions to the MCF of entire graph in $\mathbb R^{n+1}$. The estimates derived therein are later localized and used in \cite{EH_interior_estimates} to prove the existence of solution to the MCF starting from a non-compact hypersurface in $\mathbb R^{n+1}$ which satisfies a uniform local Lipschitz condition. Daskalopoulos-Saez \cite{DS} proves the uniqueness of entire graphical solutions to the MCF under an one-sided bounds on the second fundamental form. In higher codimensions, long time existence and uniqueness to the MCF of entire graphs are also established in Chau-Chen-He \cite{CCH} in the Lagrangian case and Koch-Lamm \cite{KochLamm} with the assumption of small Lipschitz norm. Without the (global) graphical assumption, a strong uniqueness theorem was established by Chen-Yin \cite{ChenYin} in any codimension when the initial immersion has bounded second fundamental form $A$ and satisfies a uniform graphic condition. Lee and the second-named author \cite{LeeMa} use an energy method to prove a uniqueness theorem for MCF with possibly unbounded curvature in any codimensions. Very recently, Peachey \cite{Peachey} constructed a smooth complete metric $\bar g$ on $\mathbb R^2$ which admits two uniformly proper solutions to the curve shortening flow coming out from the $x$-axis. 

In IMCF, the non-compact case was studied by Daskalopoulos-Huisken \cite{DH}, where they prove the existence and uniqueness to the IMCF of entire convex graph and uniform finite time singular convergence for convex entire graphs with conical behavior at infinity. More recently, Choi-Daskalopoulos \cite{CD} constructed convex solutions to the IMCF starting from some $C^{1, 1}$ convex hypersurface.

In this work, we are interested in the problem of backward uniqueness: Given two solutions $F, \widetilde F$ to the extrinsic geometric flow (\ref{eqn extrinsic flow}) such that $F(\cdot, T) = \widetilde F(\cdot, T)$. Under what conditions do we have $F = \widetilde F$?

Some versions of backward uniqueness results were proved for the MCF and the IMCF. Lee and the second-named author proved in \cite{LeeMa} a backward uniqueness theorem for MCF in general ambient manifold in any codimension. This generalizes the previous result in \cite{HongHuang}, \cite{Zhang}, when the ambient manifold is Euclidean. For IMCF, Ho-Pyo proved in \cite{HoPyo} a backward uniqueness theorem for IMCF of compact mean convex hypersurfaces in $\mathbb R^{n+1}$. 

Our first result is the following backward uniqueness theorem for extrinsic geometric flow (\ref{eqn extrinsic flow}) of hypersurfaces in general ambient manifold for general speed $f$. 

\begin{thm}\label{thm backward uniqueness of extrinsic geometric flow}
Let $(\overline M, \bar g)$ be a smooth oriented $(n+1)$ dimensional complete Riemannian manifold with positive injectivity radius $i_0$ and uniform upper bounds $|\overline\nabla ^j \overline R| \le B_j$ for $j=0,1,2,3$, where $\overline R$ is the Riemann curvature tensor of $\bar g$. Let $F, \widetilde F : M \times [0,T] \to \overline M$ be two solutions to the extrinsic geometric flow (\ref{eqn extrinsic flow}), where $f$ satisfies (\ref{eqn f = q (kappa)}) for some symmetric, smooth and monotone curvature function $q : \Omega \to \mathbb R$. Assume the following holds: 
\begin{itemize}
    \item there is $L >0$ such that 
    \begin{equation} \label{eqn |A| le L}
        |A|_g + |\widetilde A|_{\tilde g} \le L  
    \end{equation}
    and 
    \begin{equation} \label{eqn |nabla A| le L}
    |\nabla A|_g + |\widetilde \nabla \widetilde A|_{\tilde g} \le L
    \end{equation}
    on $M \times [0,T]$, and
    \item there is $\delta >0$ such that 
    \begin{equation} \label{eqn dist (kappa, partial Omega) > delta}
        \operatorname{dist} (\kappa (x, t) , \partial \Omega) >\delta, \ \  \operatorname{dist} (\tilde\kappa (x, t) , \partial \Omega) > \delta
    \end{equation}
       for all $(x, t) \in M \times [0,T]$, where $\kappa (x, t)$, $\tilde \kappa (x, t)$ are respectively the vectors of principal curvatures of $F, \widetilde F$ at $(x, t)$.
\end{itemize}
If $F(\cdot, T) = \widetilde F(\cdot, T)$ is a complete immersion. Then $F(\cdot,t)=\widetilde F(\cdot,t)$ for all $t \in [0,T]$.
\end{thm}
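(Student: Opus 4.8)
The plan is to localize the problem near the common time $T$, reduce it by a normal-graph representation to a backward uniqueness statement for a single linear uniformly parabolic scalar equation, and then close with a continuity argument.

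\emph{Step 1: localization and continuity.} I first reduce to the following local statement: there is $\epsilon>0$, depending only on $n$, $L$, $\delta$, $i_0$ and the $B_j$, such that whenever $t_1\in(0,T]$ and $F(\cdot,t_1)=\widetilde F(\cdot,t_1)$, one has $F(\cdot,t)=\widetilde F(\cdot,t)$ for all $t\in[\max\{0,t_1-\epsilon\},t_1]$. Granting this, let $I:=\{t\in[0,T]:F(\cdot,s)=\widetilde F(\cdot,s)\ \forall\,s\in[t,T]\}$; it is a nonempty closed subinterval $[t_0,T]$, and if $t_0>0$ the local statement applied at $t_1=t_0$ forces $[t_0-\epsilon,T]\subseteq I$, a contradiction, so $t_0=0$ and $F\equiv\widetilde F$. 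For $\epsilon$ to be independent of $t_1$ it matters that all hypotheses are uniform on $[0,T]$ and that each slice $F(\cdot,t)$ is again a complete immersion: by \eqref{eqn |A| le L} and \eqref{eqn dist (kappa, partial Omega) > delta}, $\kappa(x,t)$ lies in the compact set $K:=\{\lambda\in\Omega:\operatorname{dist}(\lambda,\partial\Omega)\ge\delta,\ |\lambda|\le L\}$, so $|f|\le\sup_K|q|=:C_0$; from $\partial_t g_{ij}=2fA_{ij}$ we get $e^{-2C_0L|t-s|}g_s\le g_t\le e^{2C_0L|t-s|}g_s$, whence all induced metrics are complete, and by the Gauss equation together with \eqref{eqn |A| le L} and $|\overline R|\le B_0$ they have uniformly bounded sectional curvature.

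\emph{Step 2: normal-graph reduction.} Fix $t_1$ and, after a time translation, assume $t_1=T$; write $\iota:=F(\cdot,T)=\widetilde F(\cdot,T)$. The role of \eqref{eqn |nabla A| le L} is to bound the flow in $C^1$ in time: $\partial_t(\nabla F)=\nabla(-f\mathfrak n)+(\text{curvature})$, where $\nabla f$ is a bounded contraction of $\nabla A$ and $\overline\nabla\mathfrak n=-A$, so $|\partial_t\nabla F|\le C(L,B_0)$, and likewise for $\widetilde F$; hence $F(\cdot,t),\widetilde F(\cdot,t)$ are uniformly $C^1$-close to $\iota$ for $t$ near $T$. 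Using the ambient exponential map (whose derivatives are uniformly controlled thanks to $i_0>0$ and $|\overline\nabla^j\overline R|\le B_j$), one then produces, for $t\in[T-\epsilon,T]$ with $\epsilon$ uniform, a family of diffeomorphisms $\eta_t$ of $M$, $\eta_T=\mathrm{id}$, with $\widehat F(x,t):=\widetilde F(\eta_t(x),t)=\exp^{\overline M}_{F(x,t)}\!\big(\phi(x,t)\,\mathfrak n(x,t)\big)$ for a $C^1$-small scalar height $\phi$ satisfying $\phi(\cdot,T)\equiv0$; global invertibility of $\eta_t$ follows from its $C^1$-closeness to the identity together with completeness of $(M,g_t)$. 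Since $\operatorname{image}\widehat F(\cdot,t)=\widetilde M_t$ moves with normal speed $-q(\hat\kappa)$, expanding $\partial_t\widehat F$ via the first variation of the exponential map and projecting onto the unit normal of $\widehat M_t$ writes $\partial_t\phi=\mathcal G(x,t,\phi,\nabla\phi,\nabla^2\phi)$, a fully nonlinear scalar equation whose top-order linearization is $\partial f/\partial A_{ij}$ evaluated at principal curvatures near $\kappa(x,t)$ — positive definite and bounded above by \eqref{eqn dist (kappa, partial Omega) > delta}, since on the relevant compact subset of $\Omega$ the function $q$ is smooth with $\partial q/\partial\lambda_i>0$ — and whose coefficients are controlled by $L$ and the $B_j$ through \eqref{eqn |A| le L}--\eqref{eqn |nabla A| le L}. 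As $\phi$ and the zero function both solve this equation, $\phi$ itself satisfies a \emph{linear} equation $\partial_t\phi=a^{ij}\nabla_i\nabla_j\phi+b^i\nabla_i\phi+c\,\phi$ on $M\times[T-\epsilon,T]$ with $a^{ij}\ge\lambda_0 g_t^{ij}$ and $a,b,c,\nabla a$ bounded, while $\phi$ is bounded with $\phi(\cdot,T)\equiv0$.

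\emph{Step 3: backward uniqueness, and the main obstacle.} It remains to show that a bounded solution of such a uniformly parabolic linear equation on the complete manifold $(M,g_t)$ of bounded geometry, with vanishing final data, vanishes identically — a backward-uniqueness statement of the type established via Carleman estimates (or via a Kotschwar-style weighted-energy method), which I invoke and which is the technical heart of the argument. Once $\phi\equiv0$ we have $\widehat F=F$, i.e. $\widetilde F(\eta_t(\cdot),t)=F(\cdot,t)$; both sides then parametrize $M_t$ moving with normal speed $-f\mathfrak n$, so differentiating in $t$ gives $\widetilde F_*(\partial_t\eta_t)=0$, whence $\partial_t\eta_t\equiv0$, $\eta_t\equiv\mathrm{id}$, and $F(\cdot,t)=\widetilde F(\cdot,t)$ on $[T-\epsilon,T]$; this is the local statement of Step 1, and the theorem follows. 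The principal obstacle is exactly this backward uniqueness estimate on the non-compact $(M,g_t)$ — getting it to rest only on the bounded-geometry data at hand rather than, say, a lower injectivity-radius bound for $g_t$. A secondary point requiring care is the uniformity in Step 2: that $\phi$ exists, is small, and has a uniformly-controlled equation on a time interval of length $\epsilon$ independent of $t_1$; this uses \eqref{eqn |nabla A| le L} for the $C^1$-control in time, \eqref{eqn dist (kappa, partial Omega) > delta} to keep the linearized operator uniformly elliptic (equivalently, to keep the relevant curvatures in a fixed compact subset of $\Omega$ — automatic when $\Omega$ is convex, as in MCF, IMCF, $\alpha$-GCF and $\sigma_k$ flow, and recoverable from interior parabolic estimates in general), and completeness of the slices for the global invertibility of $\eta_t$.
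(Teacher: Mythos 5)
Your route---reparametrizing $\widetilde F$ as a normal graph $\phi$ over the moving $F$, linearizing, and quoting a backward-uniqueness theorem for a scalar parabolic equation---is genuinely different from the paper's proof, which never passes to a graph gauge: it compares the two flows directly through the parallel transport $P$ of Lee--Ma, derives a coupled PDE--ODE system for $X=(\widetilde A-A)\oplus(\widetilde\nabla\widetilde A-\nabla A)$ and $Y=v\oplus(P\widetilde F_*-F_*)\oplus(\widetilde\Gamma-\Gamma)\oplus\nabla(\widetilde\Gamma-\Gamma)$, and applies Kotschwar's theorem \cite{KotschwarBU}. As written, however, your argument has genuine gaps. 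The backward-uniqueness input of Step 3 is precisely where the work has to be done, and you leave it invoked rather than verified: whichever theorem you use (Kotschwar's Theorem 3 does apply, with trivial ODE part), its hypotheses ask for more than boundedness of $a,b,c,\nabla a$ --- one needs control of the evolving metrics ($|\partial_t g|$ and its time derivative, curvature bounds for $g_t$) and time-regularity of the leading coefficients, and all of these involve $\partial_t A\sim\Lambda^{kl}\nabla_k\nabla_l h_{ij}$, i.e.\ \emph{second} derivatives of $A$, which are not among the hypotheses \eqref{eqn |A| le L}--\eqref{eqn |nabla A| le L}. The paper supplies them via the Shi-type interior estimates of Theorem \ref{thm higher order gradient estimates of A}; your proposal never produces them (your single mention of ``interior parabolic estimates'' is aimed at a different issue), so the constants in your linear equation and in the invoked backward-uniqueness theorem are not actually under control from the stated data.

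The second gap is in the linearization itself. To turn the fully nonlinear equation for $\phi$ into a linear one you integrate $D\mathcal G$ along the segment $s\mapsto(s\phi,s\nabla\phi,s\nabla^2\phi)$, and you must know that the principal-curvature vector attached to the interpolated $2$-jet stays in $\Omega$ (indeed in a compact subset where $q$ is uniformly elliptic). Convexity of $\Omega$ does not give this, because the curvature vector is not an affine function of the $2$-jet; the honest sufficient condition is uniform $C^2$-smallness of $\phi$ on the time interval, which again requires bounding $\partial_t$ of second derivatives of the immersions, i.e.\ $\nabla^2A$-type estimates. Since such interior estimates necessarily degenerate as $t\to 0$ (only \eqref{eqn |A| le L}--\eqref{eqn dist (kappa, partial Omega) > delta} hold up to $t=0$), your Step 1 claim of a single $\epsilon$ valid down to $t_1-\epsilon=0$ is not justified; the standard repair, used in the paper, is to conclude equality on $[e,T]$ for every $e>0$ and reach $t=0$ by continuity of $F$ and $\widetilde F$. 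With the Shi-type estimates added and the iteration restated in that form, your graph-gauge scheme should close and would be an alternative to the parallel-transport argument, but as it stands these two points are missing ideas rather than routine details.
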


We illustrate condition (\ref{eqn dist (kappa, partial Omega) > delta}) in the following examples: 
\begin{itemize}
    \item For MCF, the curvature function $q= \lambda_1 + \cdots + \lambda_n$ is monotone on $\Omega=\mathbb R^n$. Hence $\partial \Omega$ is empty and (\ref{eqn dist (kappa, partial Omega) > delta}) always hold.
    \item For IMCF, $q = -(\lambda_1 + \cdots + \lambda_n)^{-1}$ and $q$ is monotone on $\Omega = \{ \lambda \in \mathbb R^n: \lambda_1 + \cdots + \lambda _n >0\}$. Hence (\ref{eqn dist (kappa, partial Omega) > delta}) implies that $F, \widetilde F$ are both uniformly mean convex: $H, \widetilde H >\delta$. 
    \item For GCF, $q = \lambda_1\cdots \lambda_n$ is monotone on $\Omega =\{ \lambda : \lambda_i >0, \ i=1, \cdots, n\}$, and (\ref{eqn dist (kappa, partial Omega) > delta}) implies that $F, \widetilde F$ are both uniformly convex: $\kappa_i, \tilde\kappa_i >\delta$ for $i=1, \cdots, n$. 
\end{itemize}

When the flow is the MCF, Theorem \ref{thm backward uniqueness of extrinsic geometric flow} is weaker then the one in \cite{LeeMa}, where they prove the backward uniqueness for MCF in any codimension. In the IMCF case, our result generalizes Ho-Pyo's backward uniqueness result \cite{HoPyo} to non-compact $M$ and general ambient Riemannian manifold $(\overline M, \bar g)$. For other extrinsic geometric flows, this result is new to the author's knowledge. 

The condition (\ref{eqn |nabla A| le L}) is not necessary in the MCF case (see \cite[Theorem 1.4]{LeeMa}). However, for general extrinsic geometric flow (\ref{eqn extrinsic flow}), there's a term in the evolution equation for the second fundamental form $A$ which is quadratic in $\nabla A$, and this prevents us from applying the stardard Shi-type estimates as in the case for MCF. With a concavity assumption on the curvature function, and the convexity of the immersions, we can drop condition (\ref{eqn |nabla A| le L}). This is the content of our second main theorem: 

\begin{thm} \label{thm backward uniqueness of convex extrinsic geometric flow with concave speed}
Let $(\overline M, \bar g)$ be a smooth oriented $(n+1)$ dimensional complete Riemannian manifold with positive injectivity radius $i_0$ and uniform upper bounds $|\overline\nabla ^j \overline R| \le B_j$ for $j=0,1,2,3$, where $\overline R$ is the Riemann curvature tensor of $\bar g$. Let $F, \widetilde F : M \times [0,T] \to \overline M$ be two convex solutions to the extrinsic geometric flow (\ref{eqn extrinsic flow}), where $f$ satisfies (\ref{eqn f = q (kappa)}) for some symmetric, smooth and monotone and concave curvature function $q : \Omega \to \mathbb R$. Assume that $F$, $\widetilde F$ satisfy (\ref{eqn |A| le L}), (\ref{eqn dist (kappa, partial Omega) > delta}) for all $(x, t) \in M \times [0,T]$. If $F(\cdot, T) = \widetilde F(\cdot, T)$ is a complete immersion. Then $F(\cdot,t)=\widetilde F(\cdot,t)$ for all $t \in [0,T]$.
\end{thm}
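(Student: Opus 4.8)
The plan is to deduce Theorem~\ref{thm backward uniqueness of convex extrinsic geometric flow with concave speed} from Theorem~\ref{thm backward uniqueness of extrinsic geometric flow} by showing that, under the convexity of the solutions and the concavity of $q$, the missing gradient bound (\ref{eqn |nabla A| le L}) holds automatically on every slab $M\times[\tau,T]$ with $\tau\in(0,T)$, and then propagating the coincidence $F(\cdot,T)=\widetilde F(\cdot,T)$ backwards by an open--closed argument in $t$.

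\emph{Step 1: interior regularity.} I would show that there is $\rho_0=\rho_0(n,L,\delta,i_0,B_0,B_1)>0$ such that for every $\tau\in(0,T)$ and every $m\ge1$ there is a constant $C_m$, depending only on $m,n,L,\delta,i_0,B_0,\dots,B_3,q$ and $\tau$, with $|\nabla^mA|_g+|\widetilde\nabla^m\widetilde A|_{\tilde g}\le C_m$ on $M\times[\tau,T]$. The first observation is that (\ref{eqn |A| le L}) and (\ref{eqn dist (kappa, partial Omega) > delta}), together with convexity, confine $\kappa(x,t)$ and $\tilde\kappa(x,t)$ to a fixed compact set $K\subset\Omega$ for all $(x,t)$; on $K$ the function $q$ and all its derivatives are bounded, the $\partial q/\partial\lambda_i$ are moreover bounded below by a positive constant, and $q$ is genuinely concave --- and in particular the normal speed $f=q(\kappa)$ is uniformly bounded, so each $M_t$ moves at bounded speed. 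Now fix $(x_0,t_0)$ with $t_0\in[\tau,T]$ and pass to $\bar g$-geodesic normal coordinates about $p_0=F(x_0,t_0)$, which are defined on a ball of radius comparable to $\min\{i_0,B_0^{-1/2}\}$ with $\bar g$ and its first derivatives controlled there. From $|A|_g\le L$ one gets that $M_t$ is, for $t$ in a backward time-interval of length $\sim\min\{\tau,\rho_0^2\}$, a graph $y\mapsto(y,u(y,t))$ over a fixed-size ball in $T_{p_0}M_{t_0}$ with $|Du|\le1$ and $|D^2u|$ bounded. On this cylinder the flow is equivalent to a scalar equation $\partial_tu=\mathcal F(D^2u,Du,u,y)$, with $\mathcal F(\,\cdot\,,Du,u,y)=q(\kappa(\cdot))$, where the $\kappa$ are the eigenvalues of a symmetric matrix depending affinely on $D^2u$ with coefficients controlled on $K$. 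Monotonicity of $q$ and $\kappa\in K$ make $\mathcal F$ uniformly parabolic with ellipticity constants depending only on the stated data; concavity of $q$ as a symmetric function, via the characterization of concave spectral functions of symmetric matrices, makes $\mathcal F$ concave in the $D^2u$-variable. The parabolic Evans--Krylov estimate then yields an interior $C^{2+\alpha,\,1+\alpha/2}$ bound for $u$; the linearized equation then has $C^\alpha$ coefficients, and a standard Schauder bootstrap gives interior $C^\infty$ bounds. Translating back produces the claimed bounds on $\nabla^mA$ and $\widetilde\nabla^m\widetilde A$; the dependence of $C_m$ on $\tau$ is harmless and unavoidable, since near $t=0$ no control is assumed.

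\emph{Step 2: backward continuation.} Set $t_*=\inf\{\,t\in[0,T]:F(\cdot,s)=\widetilde F(\cdot,s)\text{ for all }s\in[t,T]\,\}$. The set on the right contains $T$, contains $[t,T]$ whenever it contains $t$, and is closed by continuity of $F$ and $\widetilde F$ in $t$; hence it equals $[t_*,T]$ and, in particular, $F(\cdot,t_*)=\widetilde F(\cdot,t_*)$. Suppose $t_*>0$. From $\partial_tg_{ij}=-2fh_{ij}$ and the boundedness of $f$ and $A$ on $M\times[0,T]$, the metrics $g(t)$ are uniformly equivalent for $t\in[0,T]$, so $F(\cdot,t_*)=\widetilde F(\cdot,t_*)$ is a complete immersion because $F(\cdot,T)$ is. Applying Step 1 with $\tau=t_*/2$ gives $L'>0$ with $|\nabla A|_g+|\widetilde\nabla\widetilde A|_{\tilde g}\le L'$ on $M\times[t_*/2,t_*]$. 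Then $F$ and $\widetilde F$ restricted to $M\times[t_*/2,t_*]$ satisfy all the hypotheses of Theorem~\ref{thm backward uniqueness of extrinsic geometric flow} --- with $L$ replaced by $\max\{L,L'\}$ in (\ref{eqn |A| le L}) and (\ref{eqn |nabla A| le L}), the same $\delta$ in (\ref{eqn dist (kappa, partial Omega) > delta}), and coincidence at the final time $t_*$ --- so $F(\cdot,t)=\widetilde F(\cdot,t)$ for all $t\in[t_*/2,t_*]$. This gives $F=\widetilde F$ on $[t_*/2,T]$, contradicting the minimality of $t_*$. Therefore $t_*=0$, i.e. $F\equiv\widetilde F$ on $M\times[0,T]$.

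The main obstacle is Step 1: carrying out the fully nonlinear interior regularity with constants that are uniform over the (possibly non-compact) $M$ and for a general ambient manifold --- verifying that the graph representation is valid on cylinders of a uniform size, that the induced scalar operator is uniformly parabolic and concave in $D^2u$ with constants controlled only by $n,L,\delta,i_0,B_0,\dots,B_3$, and that the Evans--Krylov and Schauder constants inherit this uniformity. This is precisely where concavity of $q$ is indispensable: the evolution equation for $A$ contains a term quadratic in $\nabla A$ that obstructs a direct maximum-principle (Shi-type) derivation of (\ref{eqn |nabla A| le L}), whereas concavity renders the induced scalar equation concave in the Hessian and opens the Evans--Krylov route; convexity of the immersions enters, together with (\ref{eqn dist (kappa, partial Omega) > delta}), in keeping the principal curvatures inside a fixed compact subset of $\Omega$ on which $q$ is concave.
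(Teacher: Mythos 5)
Your overall architecture is sound and close in spirit to the paper's: the only genuinely new ingredient needed for this theorem is a gradient bound for $A$ on interior time-slabs, after which the conclusion follows from the machinery behind Theorem~\ref{thm backward uniqueness of extrinsic geometric flow}. Your Step 2 is correct (and can even be streamlined: since $F(\cdot,T)=\widetilde F(\cdot,T)$ is complete, you may apply Theorem~\ref{thm backward uniqueness of extrinsic geometric flow} directly to the restrictions to $[\tau,T]$ for each $\tau>0$ and let $\tau\to0$; the open--closed argument and the completeness discussion at $t_*$ are unnecessary). The paper organizes this the same way in effect: it proves local Shi-type estimates (Theorem~\ref{thm curvature estimates when F is convex and q concave}), obtains uniform bounds on $[e,T]$ (Proposition~\ref{prop higher order curvature estimates on [e, T]}), runs the parallel-transport/Kotschwar argument on $[e,T]$, and lets $e\to0$. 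Where you genuinely diverge is in how the missing bound (\ref{eqn |nabla A| le L}) is produced: the paper kills the bad term quadratic in $\nabla A$ by pairing the pointwise inequality $\partial^2_hQ\le0$ of (\ref{eqn q concave iff Q concave}) with $A\ge0$ in the evolution of $|A|^2$, and then bootstraps via Chen--Yin cutoff arguments; you instead pass to local graphs and invoke parabolic Evans--Krylov plus Schauder.

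The Evans--Krylov step is where your proposal has a genuine gap at the stated level of generality. The hypothesis is only that $q$ is concave on an $S_n$-invariant \emph{open} set $\Omega$, which the paper interprets pointwise as $\partial^2Q/\partial h\,\partial h\le0$ on $\Omega_{SS}$; $\Omega$ need not be convex. Evans--Krylov, however, uses the two-point concavity inequality $Q(N)\le Q(M)+\partial_hQ(M)\cdot(N-M)$ for Hessian values $M,N$ of the solution at \emph{different} points of a parabolic cylinder, i.e.\ it needs concavity on a convex set containing the range of Hessians (or a concave uniformly elliptic extension of the operator), and only an $L^\infty$ bound on $D^2u$ is available at that stage, so one cannot argue that $M$ and $N$ are close. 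Convexity of the immersions does not repair this: the convex hull of $\{W\ge0,\ |W|\le L,\ \operatorname{dist}(\lambda(W),\partial\Omega)\ge\delta\}$ need not lie in $\Omega_S$. Indeed, in your argument convexity of $F,\widetilde F$ plays no real role (the confinement of $\kappa,\tilde\kappa$ to a compact $K\subset\Omega$ already follows from (\ref{eqn |A| le L}) and (\ref{eqn dist (kappa, partial Omega) > delta}) alone), which signals that you are not using the hypothesis that actually makes the paper's pointwise argument work. Your route is fine when $\Omega$ is a convex cone (GCF, IMCF, $\sigma_k$ flows), but to cover the theorem as stated you would need either an extension/convexification argument for $Q$ or the paper's maximum-principle mechanism. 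Two further points: your uniform backward-in-time graphicality cannot be obtained from (3) of Lemma~\ref{lem basic evolution of extrinsic flow}, since $D_t\mathfrak n=F_*\nabla f$ involves precisely the $\nabla A$ bound being sought; it must instead be argued from bounded speed, $|A|\le L$ and Hausdorff closeness of nearby time slices, which you flag but do not supply. Finally, the claim of bounds on $|\nabla^mA|$ for \emph{all} $m$ with constants depending only on $B_0,\dots,B_3$ overreaches (higher $m$ requires more ambient regularity), but this is harmless since only $m=1$ is needed to invoke Theorem~\ref{thm backward uniqueness of extrinsic geometric flow}.
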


The above Theorem is applicable to e.g. the GCF, and the IMCF of convex hypersurfaces.  


Next we discuss the proof of Theorem \ref{thm backward uniqueness of extrinsic geometric flow} and Theorem \ref{thm backward uniqueness of convex extrinsic geometric flow with concave speed}. In \cite{KotschwarBU}, Kotschwar proves a general backward uniqueness theorem for time dependence sections of a vector bundle over a non-compact manifold which satisfies an ODE-PDE inequalities. This is used in \cite{KotschwarBU} to derive backward uniqueness theorem for various intrinsic geometric flows including the Ricci flow and the $L^2$-curvature flow. One can apply Kotschwar's theorem to obtain backward uniqueness results for MCF (as in \cite{HongHuang}, \cite{Zhang}) and IMCF (in \cite{HoPyo}) in Euclidean spaces. However, for a general ambient background $(\overline M, \bar g)$, geometric quantities like the Gauss map of an immersion are defined as section of the pullback bundles. Hence for two apriori different immersions $F, \widetilde F$, one cannot directly compare those quantities. This difficulty was addressed in \cite{LeeMa} (and in \cite{McGahagan}, \cite{SongWang} in different contexts) by constructing a parallel transport $P$ from $\widetilde F$ to $F$, and the authors are able to prove various uniqueness and backward uniqueness theorem for MCF in general ambient manifold. We will apply the same method here. We remark that although the second fundamental form (and its derivatives) are tensors defined on the domain $M$ in the hypersurface case, we still need to estimate the full gauss map, and the use of the parallel transport $P$ cannot be avoided. Another complications in our proof, when compared to \cite{LeeMa}, \cite{HoPyo}, is that our higher derivative estimates of the speed $f$ is not immediate. Although the condition (\ref{eqn dist (kappa, partial Omega) > delta}) is stated in terms of the principal curvatures, we perform the calculations in term of the second fundamental form $A= (h_{ij})$ and the metric $g = (g_{ij})$. By choosing for each $x\in M$ a suitable local coordinates, we are able to bound not only $(\tilde h_{ij}, \tilde g_{ij})$ of $\widetilde F$, $(h_{ij}, g_{ij})$ of $F$, but also the difference of the higher order derivatives of $q (\tilde \kappa) - q(\kappa)$ in terms of $\tilde h_{ij}- h_{ij}$ and $\tilde g_{ij}-g_{ij}$, which is crucial in our calculations.  

The organization of the paper is as follows. In section 2, we introduce the background material. This includes some standard knowledge on the curvature function, submanifold theory and the evolution equations associated to the extrinsic geometric flow (\ref{eqn extrinsic flow}). In section 3, we prove two Shi-type estimates (Theorem \ref{thm higher order gradient estimates of A} and Theorem \ref{thm curvature estimates when F is convex and q concave}) for the higher derivatives of the second fundamental form. In section 4, we describe the technique used in \cite{LeeMa} to compare geometric quantities of two a-priori different immersions via the parallel transport. In section 5, we derive the key time derivative estimates of different geometric quantities associated to the flow. In section 6, we prove the crucial ODE-PDE inequalities and finish the proof of Theorem \ref{thm backward uniqueness of extrinsic geometric flow} and Theorem \ref{thm backward uniqueness of convex extrinsic geometric flow with concave speed}. In the last section, we state and prove a backward uniqueness theorem (Theorem \ref{thm backward uniqueness compact case}) for compact immersions which holds for any ambient Riemannian manifold $(\overline M, \bar g)$.


\section{Background}
In this section, we review some background materials on the curvature function $q$ and the evolution equations associated to (\ref{eqn extrinsic flow}). 

\subsection{The curvature function}
We start by defining the curvature function and proving some of its basic properties. The main reference is \cite{AMZ} (see also \cite[Chapter 18]{ACGL}). Let $S(n)$ (resp. $S_+(n)$) be the space of $n\times n$ symmetric (resp. positive definite) matrices. Let $\Omega_{SS}$ be an open subset of $S(n) \times S_+(n)$ that is invariant in the sense that  
$$ (X, g) \in \Omega_{SS} \Rightarrow (\Lambda X\Lambda^T , \Lambda g\Lambda^T)\in \Omega_{SS}, \ \ \forall \Lambda \in GL(n, \mathbb R). $$
Let $Q : \Omega_{SS} \to \mathbb R$ be a smooth function such that 
\begin{equation} \label{eqn invariance of Q} 
Q(X, g)=Q(\Lambda X\Lambda^T , \Lambda g\Lambda^T), \ \ \forall (X, g)\in \Omega_{SS},\ \ \Lambda \in GL(n, \mathbb R).
\end{equation}
Setting $\Lambda = g^{-1/2}$, we have 
$$ Q(X, g) = Q(g^{-1/2} X g^{-1/2} , I).$$
Set  $\Omega_S = \Omega_{SS} \cap S (n) \times \{ I\} \cong S(n)$, then $\Omega_S$ is open in $S(n)$. Let $Q_S : \Omega _S\to \mathbb R$ be defined by 
$$ Q_S(X) = Q(X, I).$$
Then $Q_S$ is smooth and $Q_S(OAO^T)=Q_S(A)$ for any $O \in O(n)$. For each $A\in S(n)$, let $\lambda (A)$ be the multi-set of its eigenvalues. To be precise, $\lambda(A)$ is in $\mathbb R^n /S_n$. Let $\Omega$ be a subset of $\mathbb R^n$ defined by 
$$ \Omega = \left\{ \lambda = (\lambda_1, \cdots, \lambda_n) : D_{\lambda} = \begin{bmatrix} \lambda _1 & & \\ & \ddots & \\ && \lambda_n \end{bmatrix} \in \Omega_S \right\} .$$
By a theorem of Schwarz \cite{Schwarz}, there is a symmetric smooth function $q : \Omega \to \mathbb R$ such that 
$$Q_S(A) = q (\lambda (A)).$$
On the other hand, given a smooth symmetric function $q: \Omega \to \mathbb R$ on a $S_n$-invariant open susbset $\Omega$ of $\mathbb R^n$, one defines $Q, Q_S$ as 
\begin{equation*}
    Q_S(X) := q (\lambda(X)), \ \ Q(h, g) := Q_S( \mathcal D(h, g)) = Q_S (g^{-1/2} h g^{-1/2}).
\end{equation*}
Hence from now on we call both $q$ and $Q$ the curvature function of the flow (\ref{eqn extrinsic flow}).

The following can be checked using the compactness of $O(n)$. 
\begin{lem}
    Let $K$ be a compact subset of $\Omega$. Then 
    \begin{equation} \label{eqn dfn K_S}
        K_S:= \{ A \in \Omega_S : \lambda(A) \in K\}
    \end{equation}
    is a compact subset of $\Omega_S$. 
\end{lem}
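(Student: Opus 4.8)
The plan is to prove both inclusions: that $K_S$ is a closed subset of $\Omega_S$ and that $K_S$ is bounded in $S(n)$, and that its closure stays inside $\Omega_S$ (equivalently, $K_S$ is closed as a subset of $S(n)$ and contained in $\Omega_S$). The key structural fact I would exploit is that the eigenvalue map $\lambda: S(n)\to \mathbb R^n/S_n$ is continuous, together with the compactness of the orthogonal group $O(n)$.

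First I would address boundedness. For $A\in K_S$ we have $\lambda(A)\in K$, and since $K$ is compact in $\mathbb R^n$ it is bounded, say $|\lambda_i|\le C$ for all eigenvalues of every $A\in K_S$. Because $A$ is symmetric, $|A|^2 = \operatorname{tr}(A^TA) = \sum_i \lambda_i(A)^2 \le nC^2$, so $K_S$ is bounded in $S(n)$. Next, for closedness, I would take a sequence $A_j \in K_S$ with $A_j \to A$ in $S(n)$ and show $A \in K_S$. Continuity of the eigenvalues (as an element of $\mathbb R^n/S_n$, or after suitable reordering) gives that $\lambda(A_j)\to\lambda(A)$; since each $\lambda(A_j)\in K$ and $K$ is closed, $\lambda(A)\in K$. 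In particular $\lambda(A)\in\Omega$, so the diagonal matrix $D_{\lambda(A)}\in\Omega_S$; since $\Omega_S$ is $O(n)$-invariant and $A = O D_{\lambda(A)} O^T$ for some $O\in O(n)$, we get $A\in\Omega_S$, hence $A\in K_S$. This shows $K_S$ is closed in $S(n)$ and bounded, hence compact, and since $K_S\subset\Omega_S$ it is a compact subset of $\Omega_S$.

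The one point requiring a little care — and where the compactness of $O(n)$ enters — is the continuity of the (unordered) eigenvalue assignment and the passage between a symmetric matrix and its diagonalization; I would phrase this via the Hausdorff continuity of the spectrum, or alternatively argue directly: writing $A_j = O_j D_j O_j^T$ with $O_j\in O(n)$ and $D_j$ diagonal with entries in $K$, pass to a subsequence so that $O_j\to O\in O(n)$ (possible since $O(n)$ is compact) and $D_j\to D$ (possible since the entries lie in the compact set describing $K$'s coordinates), whence $A = ODO^T$ with $\lambda(A)=\lambda(D)\in K$. I expect this diagonalization-and-subsequence argument to be the main (though minor) obstacle, since it is the only place the statement is not a formal consequence of continuity of a single map; everything else is routine.
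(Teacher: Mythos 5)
Your proof is correct, and it follows exactly the route the paper indicates: the paper gives no written proof beyond the remark that the lemma ``can be checked using the compactness of $O(n)$,'' and your diagonalization-and-subsequence argument (write $A_j=O_jD_jO_j^T$, extract convergent subsequences of $O_j$ in the compact group $O(n)$ and of $D_j$ with entries in the compact set $K$, then conclude $A=ODO^T\in\Omega_S$ with $\lambda(A)\in K$) is precisely the intended use of that compactness, with the boundedness and closedness observations filling in the routine details.
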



Let $\mathcal D: \Omega _{SS}\to \Omega_S$ be given by $\mathcal D(X, g) = g^{-1/2}Xg^{-1/2}$. While $\mathcal D^{-1} (K_S)$ is not compact, we have the following 

\begin{prop} \label{prop K_{SS} is a compact set}
 Let $K_S $ be a compact subset in $\Omega_S$ and $\lambda >1$. Then 
 \begin{equation} \label{eqn dfn of K_lambda}
     K_{SS}^\lambda :=\mathcal D^{-1} (K_S) \cap \{ (X, g) \in \Omega_{SS}: \lambda ^{-1}\delta_0 \le g\le \lambda \delta_0\}
 \end{equation}
 is compact in $\Omega_{SS}$. Here $\delta_0$ is the Euclidean metric on $\mathbb R^n$. 
\end{prop}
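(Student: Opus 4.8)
The plan is to show that $K_{SS}^\lambda$ is closed and bounded in $\Omega_{SS}$ (equivalently, that it is sequentially compact with limit point still in $\Omega_{SS}$, i.e. not escaping to the boundary). The set $\Omega_{SS}$ is open in $S(n)\times S_+(n)$, so "compact in $\Omega_{SS}$" means: closed and bounded as a subset of $S(n)\times S(n)$, with no limit points on $\partial\Omega_{SS}$ nor with $g$ degenerating. Fix a sequence $(X_k,g_k)\in K_{SS}^\lambda$. The metric component is immediately controlled: the constraint $\lambda^{-1}\delta_0\le g_k\le\lambda\delta_0$ forces $g_k$ to lie in a compact subset of $S_+(n)$, so after passing to a subsequence $g_k\to g_\infty$ with $\lambda^{-1}\delta_0\le g_\infty\le\lambda\delta_0$; in particular $g_\infty$ is positive definite and $g_k^{-1/2}\to g_\infty^{-1/2}$ smoothly.

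Next I would control $X_k$. By hypothesis $\mathcal D(X_k,g_k)=g_k^{-1/2}X_kg_k^{-1/2}=:A_k\in K_S$, and $K_S$ is compact in $\Omega_S$, so after a further subsequence $A_k\to A_\infty\in K_S\subset\Omega_S$. Then $X_k=g_k^{1/2}A_kg_k^{1/2}\to g_\infty^{1/2}A_\infty g_\infty^{1/2}=:X_\infty$, which gives boundedness of the $X$-component and identifies the limit. It remains to check $(X_\infty,g_\infty)\in\Omega_{SS}$: indeed $\mathcal D(X_\infty,g_\infty)=g_\infty^{-1/2}X_\infty g_\infty^{-1/2}=A_\infty\in\Omega_S$, and since $\Omega_S=\Omega_{SS}\cap(S(n)\times\{I\})$ and $(A_\infty,I)\in\Omega_{SS}$, the $GL(n,\mathbb R)$-invariance of $\Omega_{SS}$ (applied with $\Lambda=g_\infty^{1/2}$) yields $(g_\infty^{1/2}A_\infty g_\infty^{1/2},\,g_\infty^{1/2}I g_\infty^{1/2})=(X_\infty,g_\infty)\in\Omega_{SS}$. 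Finally $A_\infty\in K_S$ and $\lambda^{-1}\delta_0\le g_\infty\le\lambda\delta_0$ show the limit lies in $K_{SS}^\lambda$ itself, so $K_{SS}^\lambda$ is (sequentially) compact.

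The only place requiring care — the "main obstacle," though it is mild — is verifying that the limit does not leak to $\partial\Omega_{SS}$; this is exactly where the two defining constraints of $K_{SS}^\lambda$ are both used: the metric bound keeps $g_k$ off the boundary of $S_+(n)$ (so $g_k^{-1/2}$ stays bounded and the factorization $X_k=g_k^{1/2}A_kg_k^{1/2}$ converges), and $\mathcal D^{-1}(K_S)$ together with the invariance of $\Omega_{SS}$ keeps the pair inside $\Omega_{SS}$. Without the metric bound, $\mathcal D^{-1}(K_S)$ alone is non-compact (as already noted in the excerpt) precisely because $g$ can degenerate while $g^{-1/2}Xg^{-1/2}$ stays in $K_S$; the intersection with $\{\lambda^{-1}\delta_0\le g\le\lambda\delta_0\}$ is what rules this out.
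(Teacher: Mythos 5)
Your proposal is correct and follows essentially the same argument as the paper: pass to subsequential limits of $g_k$ (using the bound $\lambda^{-1}\delta_0\le g_k\le\lambda\delta_0$) and of $g_k^{-1/2}X_kg_k^{-1/2}\in K_S$, then recover convergence of $X_k$ via the identity $X_k=g_k^{1/2}(g_k^{-1/2}X_kg_k^{-1/2})g_k^{1/2}$. Your additional check that the limit lies in $\Omega_{SS}$ (via the $GL(n,\mathbb R)$-invariance) and indeed in $K_{SS}^\lambda$ is a slightly more explicit rendering of a step the paper leaves implicit, not a different method.
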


\begin{proof}
Let $(X_k, g_k)_{k=1}^\infty$ be a sequence in $K_{SS}^\lambda$. Since $K_S$ is compact, a subsequence of $(g^{-1/2}_k X_k g^{-1/2}_k)_{k=1}^\infty$ converges. Since $\lambda^{-1} \delta_0 \le  g_k\le \lambda \delta_0$, a subsequence of $(g_k)_{k=1}^\infty$ converges. Using 
$$ X_k = g^{1/2}_k (g^{-1/2}_k X_k g^{-1/2}_k) g^{1/2}_k,$$
one concludes that a subsequence of $(X_k)_{k=1}^\infty$ also converges. Hence $(X_k, g_k)_{k=1}^\infty$ has a convergent subsequence and $K_{SS}^\lambda$ is compact. 
\end{proof}

Define the matrix $L_Q = (L^{ij}_Q)$, where
\begin{equation} \label{eqn dfn of L^ij_Q}
    L^{ij}_Q :=\frac{\partial Q}{\partial h_{ij}}
\end{equation} 
and $\frac{\partial Q}{\partial h_{ij}}$ is the partial derivative of $Q = Q(h_{ij}, g_{ij})$ with respect to $h_{ij}$. It is proved (see \cite[Chapter 18]{ACGL}) that the monotonicity assumption (\ref{eqn dfn q is monotone}) on $q$ is equivalent to that $L_Q$ is positive definite on $\Omega_{SS}$: 
\begin{equation} \label{eqn L_Q is positive definite}
    L_Q >0 , \ \ \text{ for all } (h, g) \in \Omega_{SS}. 
\end{equation}

Lastly, recall that $q$ is concave on $\Omega$ if and only if $Q$ satisfies 
\begin{equation} \label{eqn q concave iff Q concave}
    \frac{\partial^2 Q}{\partial h_{ij} \partial h_{kl}} \le 0, \ \ \text{ on } \Omega_{SS}.
\end{equation}
A proof can be found in \cite[Corollary 18.9]{ACGL}.

\subsection{Extrinsic geometric flow: the evolution equations}
Let $F: M\times [0,T]\to \overline M$ be a family of immersed hypersurfaces in $\overline M$. Let $F_t = \frac{\partial F}{\partial t}$ and for any local coordinates $(x^1, \cdots, x^n)$ of $M$, we write $F_i = \frac{\partial F}{\partial x^i}$. When no confusion would arise, for each $t\in [0,T]$ we also use $F$ to denote the immersion $F (\cdot, t)$ at $t$.  

Let $\mathfrak n = \mathfrak n(x, t)$ be a unit normal vector fields along $F$. We assume that $\mathfrak n$ is chosen such that for any oriented local charts $(x^1, \cdots, x^n)$ of $M$, 
$$ (F_1, \cdots, F_n , \mathfrak n)$$
is positively oriented with respect to the orientation of $\overline M$. 

The second fundamental form $A$ of $F$ is defined by 
\begin{equation*}
    A(X, Y) = -\bar g ( \overline \nabla _X Y, \mathfrak n),
\end{equation*}
and in local coordinates we write $A = (h_{ij})$. 




Next we consider the evolution equations of several geometric quantities along the flow (\ref{eqn extrinsic flow}). A proof can be found in \cite{HP}.

\begin{lem} \label{lem basic evolution of extrinsic flow}
    Let $F: M\times [0,T]\to \overline M$ satisfies the extrinsic flow (\ref{eqn extrinsic flow}). Let $g=g(t)$ denote the induced metric on $M$ and $\nabla$ the Levi-Civita connection with respect to $g$, and let ($h_{ij}$) be the second fundamental form of $F$. Then the following holds:
\begin{itemize}
\item [(1)] $\partial_t g_{ij}= 2f h_{ij}$, 
\item [(2)] $\partial_t \Gamma^{k}_{ij}=g^{kl}[\nabla_i(fh_{jl})+\nabla_j(fh_{il})-\nabla_l(fh_{ij})]$,
\item [(3)] $D_t \mathfrak n = F_*  \nabla f$,
\item [(4)] $\nabla_i \mathfrak n = g^{jk} h_{ij} F_k$.
\end{itemize}
\end{lem}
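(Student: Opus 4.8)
The plan is to derive all four identities directly from the definitions of the induced metric $g$, the unit normal $\mathfrak n$, and the second fundamental form $A$, using only that the ambient connection $\overline\nabla$ is compatible with $\bar g$ and torsion free. Working in the pullback bundle $F^*T\overline M$ with its induced connection (write $D_t$ for the covariant derivative along $t\mapsto F(x,t)$ and $D_i$ for the covariant derivative in the $x^i$ direction), torsion freeness together with $[\partial_t,\partial_{x^i}]=0$ and $[\partial_{x^i},\partial_{x^j}]=0$ gives the symmetry relations $D_t F_i=D_i F_t$ and $D_i F_j=D_j F_i$, which are the only commutation inputs needed. I would prove $(4)$ first, since $(1)$ and $(3)$ rely on it.

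For $(4)$: differentiating $\bar g(\mathfrak n,\mathfrak n)=1$ in the $x^i$ direction shows $D_i\mathfrak n$ is tangential, so $D_i\mathfrak n=b_i{}^kF_k$ for some coefficients $b_i{}^k$. Differentiating $\bar g(\mathfrak n,F_j)=0$ gives $\bar g(D_i\mathfrak n,F_j)=-\bar g(\mathfrak n,D_iF_j)=-\bar g(\mathfrak n,\overline\nabla_{F_i}F_j)=A(F_i,F_j)=h_{ij}$ by the definition of $A$; hence $b_i{}^kg_{kj}=h_{ij}$ and $D_i\mathfrak n=g^{jk}h_{ij}F_k$. For $(1)$: since $\bar g$ does not depend on $t$, $\partial_tg_{ij}=\bar g(D_tF_i,F_j)+\bar g(F_i,D_tF_j)$; replacing $D_tF_i$ by $D_iF_t=D_i(-f\mathfrak n)=-(\nabla_if)\mathfrak n-f\,D_i\mathfrak n$ and substituting $(4)$, the term along $\mathfrak n$ is annihilated by pairing with $F_j$ and only the $fh_{ij}$ term survives, which gives $(1)$ up to the overall sign dictated by the conventions chosen for the speed $f$ and for $A$.

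For $(3)$: from $\bar g(\mathfrak n,\mathfrak n)=1$, $D_t\mathfrak n$ is tangential, say $D_t\mathfrak n=a^kF_k$; differentiating $\bar g(\mathfrak n,F_j)=0$ in $t$ and using $D_tF_j=D_jF_t=D_j(-f\mathfrak n)$ together with $\bar g(\mathfrak n,D_j\mathfrak n)=0$ yields $\bar g(D_t\mathfrak n,F_j)=-\bar g(\mathfrak n,D_j(-f\mathfrak n))=\nabla_jf$, so $a^k=g^{jk}\nabla_jf$ and $D_t\mathfrak n=F_*\nabla f$. For $(2)$: I would invoke the standard variation-of-Christoffel-symbols identity, valid for any smooth family of metrics, $\partial_t\Gamma^k_{ij}=\tfrac12g^{kl}(\nabla_i\dot g_{jl}+\nabla_j\dot g_{il}-\nabla_l\dot g_{ij})$, which follows by noting that $\partial_t\Gamma^k_{ij}$ is the difference of two connections and hence a tensor, evaluating $\Gamma^k_{ij}=\tfrac12g^{kl}(\partial_ig_{jl}+\partial_jg_{il}-\partial_lg_{ij})$ in normal coordinates at a point where $\Gamma=0$ and the first derivatives of $g$ vanish, and then substituting $\dot g_{ij}=2fh_{ij}$ from $(1)$.

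There is no serious obstacle: the whole content is the Leibniz rule for $\overline\nabla$, the symmetry of the pullback connection, and one short linear-algebra step in $(4)$, and these are the classical computations recorded in \cite{HP}. The only point needing care is to keep the bookkeeping of the pullback connection straight — in particular never to interchange $D_t$ and $D_i$ except via the symmetry relation $D_tF_i=D_iF_t$ — and to propagate the sign conventions attached to $f$ and to $A$ consistently through all four identities.
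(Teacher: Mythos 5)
Your derivation is correct in method: it is the classical Gauss--Weingarten computation which the paper itself does not reproduce but simply cites from Huisken--Polden, and items (2), (3), (4) are established completely — (4) by differentiating $\bar g(\mathfrak n,\mathfrak n)=1$ and $\bar g(\mathfrak n,F_j)=0$, (3) from the symmetry $D_tF_j=D_jF_t$ of the pullback connection, and (2) from the standard first variation of the Christoffel symbols applied to $\partial_t g_{ij}$.

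The one place you should not hedge is the sign in (1). Carrying your own computation through with the paper's stated convention $A(X,Y)=-\bar g(\overline\nabla_XY,\mathfrak n)$, i.e. $h_{ij}=-\bar g(D_iF_j,\mathfrak n)$, you get $\bar g(D_i\mathfrak n,F_j)=h_{ij}$ (this is exactly (4)), and then $\partial_t g_{ij}=\bar g(D_iF_t,F_j)+\bar g(F_i,D_jF_t)=-2fh_{ij}$, which is the Huisken--Polden formula; the printed identity $\partial_t g_{ij}=+2fh_{ij}$ corresponds instead to the opposite convention $A(X,Y)=+\bar g(\overline\nabla_XY,\mathfrak n)$, under which (4) would acquire a minus sign. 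So (1) and (4) as printed cannot both hold under a single sign convention, and "up to the overall sign dictated by the conventions" is not something a proof can leave open: you should fix the convention (the displayed definition of $A$ in Section 2), derive $\partial_t g_{ij}=-2fh_{ij}$, and note that the sign in (1) — and correspondingly in (2), which inherits whatever sign (1) carries through the variation formula $\partial_t\Gamma^k_{ij}=\tfrac12 g^{kl}(\nabla_i\partial_t g_{jl}+\nabla_j\partial_t g_{il}-\nabla_l\partial_t g_{ij})$ — must be adjusted, or else flip the definition of $A$ and adjust (4). Item (3) is independent of this choice, and the remainder of the paper uses only bounds on $|\partial_t g|$ and the $*$-notation for $\partial_t\Gamma$, so nothing downstream is affected; but a proof of the lemma as stated has to confront the sign rather than leave it floating.
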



The evolution equation for the second fundamental form $A$ and the speed $f$ are also computed in \cite[Corollary 3.3]{HP}:

\begin{lem}\label{lem evolution of A under extrinsic geometric flow}
Let $F: M\times [0,T]\to \overline M$ satisfies the extrinsic flow (\ref{eqn extrinsic flow}), where $f$ satisfies (\ref{eqn f = q (kappa)}) for a smooth symmetric function $q$. Then the following holds:
\begin{align} \label{eqn evolution of A under extrinsic flow}
    \frac{\partial}{\partial t}h_{ij} &= \Lambda^{kl}\nabla_k\nabla_l h_{ij} + \frac{\partial ^2 Q}{\partial h_{kl} \partial h_{pq}} \nabla_ih_{kl} \nabla_j h_{pq} \\
    & \ \ \ + \Lambda^{kl} \{ -h_{kl} h_{i}^m h_{mj} +h_{k}^mh_{il}h_{mj} - h_{kj} h_{i}^m h_{ml} +h_{k}^m h_{ij} h_{ml}  \notag \\
    &\ \ \ + \bar R_{kilm}h^m_j + \bar R_{kijm}h^m_l + \bar R_{iljm}h^m_k + \bar R_{oioj}h_{kl} - \bar R_{okol}h_{ij} + \bar R_{mljk}h^m_i \notag \\ 
    &\ \ \ + \bar \nabla_k \bar R_{ojil} + \bar \nabla_i \bar R_{oljk} \}  - f (h_{ik} h_j^k +\bar R_{oioj}) \notag, \\
    \frac{\partial f}{\partial t} &= \Lambda^{ij} \nabla_i \nabla _j f -f \Lambda^{ij} (h_{i}^k h_{kj} + \bar R_{oioj}). \label{eqn evolution of f under extrinsic flow}
\end{align}
where $\overline R$ is the Riemann curvature tensor of $(\overline M, \bar g)$, $Q$ is the curvature function and $\Lambda ^{ij} := L_Q^{ij} (h, g)$. 
\end{lem}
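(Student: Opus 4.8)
The plan is to obtain both equations directly from the flow equation $\partial_t F = -f\mathfrak n$, the Gauss--Weingarten relations, and the commutation identities collected in Lemma~\ref{lem basic evolution of extrinsic flow}, following the classical computation for the mean curvature flow. \textbf{First}, note the basic building block: since $\partial_t$ and $\partial_i$ are coordinate vector fields on $M\times[0,T]$, the pullback connection satisfies $D_t F_i = D_i F_t = -\overline\nabla_{F_i}(f\mathfrak n)$, whose tangential and normal parts are read off from the Weingarten relation (item (4) of Lemma~\ref{lem basic evolution of extrinsic flow}). \textbf{Next}, to evolve $h_{ij}$, I start from $-h_{ij} = \bar g(\overline\nabla_{F_i}F_j, \mathfrak n)$ and differentiate in $t$. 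The key input is the ambient curvature commutation $D_t\overline\nabla_{F_i}F_j - \overline\nabla_{F_i}(D_t F_j) = \overline R(F_t, F_i)F_j$; substituting $F_t = -f\mathfrak n$, $D_t F_j = D_j F_t = -\overline\nabla_{F_j}(f\mathfrak n)$, pairing with $\mathfrak n$, and using $D_t\mathfrak n = F_*\nabla f$ (item (3)) for the leftover piece, one arrives at a schematic identity of the shape
\[
\partial_t h_{ij} = \nabla_i\nabla_j f - f\,(h_{ik}h^k_j + \overline R_{oioj}) + (\text{lower-order ambient curvature terms}),
\]
the flat-space case being fixed against the known MCF identity $\partial_t h_{ij} = \Delta h_{ij} - 2Hh_{ik}h^k_j + |A|^2 h_{ij}$ via Simons.

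\textbf{Then} I insert $f = Q(h,g)$. Since $\nabla g = 0$, the chain rule gives $\nabla_i f = L_Q^{kl}\nabla_i h_{kl} = \Lambda^{kl}\nabla_i h_{kl}$ and hence
\[
\nabla_i\nabla_j f = \frac{\partial^2 Q}{\partial h_{kl}\partial h_{pq}}\nabla_i h_{kl}\,\nabla_j h_{pq} + \Lambda^{kl}\nabla_i\nabla_j h_{kl},
\]
which already produces the quadratic-in-$\nabla A$ term of (\ref{eqn evolution of A under extrinsic flow}). It remains to convert $\Lambda^{kl}\nabla_i\nabla_j h_{kl}$ into $\Lambda^{kl}\nabla_k\nabla_l h_{ij}$: apply the Codazzi equation (whose non-flat defect is an ambient curvature term) to symmetrize indices, commute the resulting second covariant derivatives — producing the intrinsic Riemann tensor of $M$, which by the Gauss equation is a quadratic polynomial in $h_{ij}$ plus an ambient curvature term — and differentiate the Codazzi relation, which yields the $\overline\nabla\overline R$ terms. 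Contracting everything with $\Lambda^{kl}$ and combining with the $-f(h_{ik}h^k_j + \overline R_{oioj})$ from the previous step assembles exactly (\ref{eqn evolution of A under extrinsic flow}).

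\textbf{Finally}, for the speed, write $\partial_t f = \Lambda^{ij}\partial_t h_{ij} + \frac{\partial Q}{\partial g_{ij}}\,\partial_t g_{ij}$, use $\partial_t g_{ij} = 2fh_{ij}$ (item (1)), and determine $\frac{\partial Q}{\partial g_{ij}}$ from the $GL(n)$-invariance (\ref{eqn invariance of Q}): differentiating $Q(\Lambda h\Lambda^T, \Lambda g\Lambda^T) = Q(h,g)$ at $\Lambda = I$ along symmetric generators gives an Euler-type identity expressing $\frac{\partial Q}{\partial g_{ij}}$ through $\Lambda$ and $h$. Contracting $\Lambda^{ij}$ into the $h_{ij}$-equation, the parabolic term becomes $\Lambda^{ij}\nabla_i\nabla_j f$ (the $\partial^2Q$ term and the Simons curvature/cubic terms collapsing against this Euler relation), leaving $\partial_t f = \Lambda^{ij}\nabla_i\nabla_j f - f\Lambda^{ij}(h_i^k h_{kj} + \overline R_{oioj})$, which is (\ref{eqn evolution of f under extrinsic flow}).

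\textbf{Main obstacle.} The substantive difficulty is the bookkeeping in the Simons/Codazzi step: tracking the many ambient-curvature and $h$-polynomial terms produced by the repeated use of Codazzi and the curvature commutators so that they assemble into precisely the stated expression, with all sign and index-raising conventions (orientation of $\mathfrak n$, the sign convention for $\overline R$, the position of the $o$-index) pinned down consistently throughout. The evolution of $f$ is conceptually routine but is the other place where a sign slip in the invariance identity for $\frac{\partial Q}{\partial g_{ij}}$ would propagate.
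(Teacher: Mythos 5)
Your proposal is correct in outline and coincides with the approach behind the paper's own treatment: the paper does not prove Lemma \ref{lem evolution of A under extrinsic geometric flow} at all but simply cites \cite[Corollary 3.3]{HP}, and your sketch (time-differentiating $h_{ij}$ via the pullback-connection curvature commutation and the Gauss--Weingarten relations, inserting $f=Q(h,g)$ with $\nabla g=0$ to produce the $\partial^2_hQ$ term, converting $\Lambda^{kl}\nabla_i\nabla_j h_{kl}$ to $\Lambda^{kl}\nabla_k\nabla_l h_{ij}$ by Codazzi, Ricci commutation, Gauss and differentiated Codazzi, and deriving the $f$-equation from the chain rule together with the $GL(n)$-invariance of $Q$) is exactly the standard computation carried out in that reference. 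The only caveat is the one you already flag: the sign and index conventions (orientation of $\mathfrak n$, the $(0,4)$-curvature convention, and consistency with items (1)--(4) of Lemma \ref{lem basic evolution of extrinsic flow}) must be tracked with care in the deferred bookkeeping, since that is precisely where the final form of \eqref{eqn evolution of A under extrinsic flow}--\eqref{eqn evolution of f under extrinsic flow} is pinned down.
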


In the above Lemma, for any ambient $(0,q)$ tensor $S = (S_{\alpha_1\cdots \alpha_{q}})$ on $\overline M$, $(S_{i_1\cdots i_q})$ is a $(0,q)$ tensor on $M$ defined by
\begin{equation*}
    S_{i_1\cdots i_q} := S (F_{i_1}, \cdots, F_{i_q}).
\end{equation*}
Also, an $o$ in the indices refers to evaluation with $\mathfrak n$, e.g.
\begin{equation*}
S_{oioj}:= S( \mathfrak n, F_{i}, \mathfrak n , F_{j}). 
\end{equation*}

In this paper we need to compare tensors that are defined along two a-priori different immersions $F$, $\widetilde F$ and we prefer notations that clearly show the dependence on $F$, $\widetilde F$. Hence we use the (also standard) pullback notation: 
\begin{equation} \label{eqn dfn of pullback of ambient tensor}
    (F^*S)_{i_1\cdots i_q} = S(F_{i_1}, \cdots, F_{i_q}).
\end{equation}
Although tensors of the form $S_{oioj}$ are not pullback of ambient tensors on $\overline M$, by abusing notations we still use $F^*( \iota^k_{\mathfrak n} S)$ to denote such tensors. For example, given a $(0,4)$ tensor $S$ on $\overline M$, $F^*(\iota^2_{\mathfrak n} S)$ denotes a $(0,2)$ tensor on $M$ of the form   
\begin{equation}\label{eqn dfn of pullback of ambient tensor with mathfrak n}
    (F^*(\iota^2_{\mathfrak n} S))_{ij} = S(\mathfrak n, F_i, \mathfrak n, F_j)
\end{equation}
or $S(F_i , \mathfrak n, \mathfrak n, F_j)$, $S(F_i, \mathfrak n, F_j, \mathfrak n)$ etc. 

\begin{rem}
    In the latter part of the paper we also use the restriction $S|_F$ of an ambient tensor $S$ (see \cite[Lemma 4.2]{LeeMa}), which is related but different from $F^*S$ defined in (\ref{eqn dfn of pullback of ambient tensor}).  
\end{rem}

In the last part of this section, we derive the evolution equations for the higher derivative of $A$ along the flow (\ref{eqn extrinsic flow}). To do this we first fix some notations. For any tensor $\mathcal T$ on $M$ and $k \in \mathbb N$, let $\mathcal T^k = \mathcal T\otimes \cdots \otimes \mathcal T$ be the $k$ times tensor product of $\mathcal T$. Given $j$ tensors $\mathcal T_1, \cdots, \mathcal T_j$ on $M$, we use
\begin{equation*}
    \mathcal L = \mathcal L(\mathcal T_1, \cdots, \mathcal T_j)
\end{equation*} 
to denote some finite linear combination of tensors of the form 
$$ \mathcal T_{j_1}^{k_1} * \cdots * \mathcal T_{j_m}^{k_m}, $$
where $\{ j_1, \cdots, j_m \} \subset \{1, \cdots, j\}$, $k_1, \cdots, k_m \in \mathbb N$ and $*$ are some contractions. 

Also, from the invariance (\ref{eqn invariance of Q}) of $Q$, for any symmetric two tensors $B$ and any Riemannian metric $g$ on $M$, 
\begin{equation} \label{eqn dfn of partial^k_h Q (B, g)}
    \partial^k_h Q (B,g)^{i_1 j_1 \cdots i_kj_k} := \frac{\partial^kQ}{\partial h_{i_1j_1} \cdots \partial h_{i_kj_k}} (B_{ij},g_{ij})
\end{equation}
defines a $(2k,0)$ tensor on $M$. Note that $\Lambda = \partial_h Q(A, g)$. 


Using the above notations, we can now rewrite (\ref{eqn evolution of A under extrinsic flow}) as 
\begin{equation} \label{eqn evolution eqn of A - simplified version}
    \left( \partial_t - \Lambda^{kl} \nabla_k \nabla_l \right) A =  \partial^2_h Q(A,g) * \nabla A * \nabla A+ \mathcal L, 
\end{equation}
where 
\begin{equation} \label{eqn dfn of mathcal L_0}
    \mathcal L= \mathcal L (Q(A, g), \partial_h Q (A,g), g^{-1}, A, F^*\overline R, F^*(\iota^2_{\mathfrak n} \overline R), F^*(\bar\nabla \overline R)). 
\end{equation}

In the following we define $\Delta^f = \Lambda^{ij} \nabla_i \nabla_j$. 

\begin{prop} \label{prop evolution equation of nabla^k A}
    For each $k=0,1,\cdots$, $\nabla^k A$ satisfies the equation 
    \begin{equation} \label{eqn evolution equation for nabla^k A}
        (\partial _t -\Delta^f) \nabla^k A = \partial^2 _h Q (A, g) * \nabla A * \nabla^{k+1} A + \mathcal L_k,
    \end{equation}
    where $\mathcal L_k$ is a finite linear combination of tensors of the form
    \begin{equation} \label{eqn finite linear combination in mathcal L_k}
        \mathscr F * \nabla^{i_1}  A * \cdots * \nabla^{i_j } A,
    \end{equation}
    where 
    \begin{equation*} 
    \mathscr F = \mathscr F( (\partial^l_h Q(A, g))_{l\le k+2}, g^{-1}, F^* (\iota_{\mathfrak n}^m \bar\nabla^j \overline R)_{j\le k+1, m\le j+2}),
    \end{equation*}
    $i_1, \cdots, i_j \in \{0, \cdots, k\}$ and $i_1+ \cdots +i_j \le k+2$. 
\end{prop}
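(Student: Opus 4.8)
The plan is to prove \eqref{eqn evolution equation for nabla^k A} by induction on $k$, starting from the base case $k=0$, which is precisely the simplified evolution equation \eqref{eqn evolution eqn of A - simplified version} together with the observation that the tensor $\mathcal L$ in \eqref{eqn dfn of mathcal L_0} has exactly the claimed form (it involves $Q(A,g)$, $\partial_h Q(A,g)$, $g^{-1}$, $A=\nabla^0 A$ and the pulled-back ambient curvature terms $F^*\overline R$, $F^*(\iota^2_{\mathfrak n}\overline R)$, $F^*(\bar\nabla\overline R)$, all of which are subsumed by the schematic $\mathscr F * \nabla^{i_1}A*\cdots*\nabla^{i_j}A$ with $i_1+\cdots+i_j\le 2$). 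For the inductive step, I would apply $\nabla$ to \eqref{eqn evolution equation for nabla^k A} and commute $\nabla$ past $\partial_t - \Delta^f$.

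The key computation is the commutator $[\nabla,\ \partial_t-\Delta^f]$ acting on a tensor. First, $\nabla$ does not commute with $\partial_t$: by Lemma~\ref{lem basic evolution of extrinsic flow}(2), $\partial_t\Gamma^k_{ij}$ is a contraction of $g^{-1}$ with $\nabla(fh)$, hence schematically $\partial_t\Gamma = g^{-1}*\nabla(f A) = g^{-1}*\partial_h Q(A,g)*\nabla A$ (using $f = Q(A,g)$ so $\nabla f = \partial_h Q(A,g)*\nabla A$ plus a curvature term from the metric dependence — this needs care, but the metric enters only through $g$ and the ambient geometry along $F$, producing terms already in $\mathscr F$). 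Commuting $\partial_t$ past $\nabla$ on a tensor $\mathcal T$ therefore produces terms of the form $(\partial_t\Gamma)*\mathcal T = \mathscr F * \nabla A * \mathcal T$. Second, commuting $\nabla$ past $\Delta^f = \Lambda^{kl}\nabla_k\nabla_l$ produces: (a) a term $(\nabla\Lambda)*\nabla\nabla\mathcal T$ where $\nabla\Lambda = \nabla(\partial_h Q(A,g)) = \partial^2_h Q(A,g)*\nabla A$ (plus lower terms); and (b) curvature-commutator terms $Rm * \nabla\mathcal T$ and $\nabla Rm * \mathcal T$, where the intrinsic curvature $Rm$ of $g$ is expressible via the Gauss equation in terms of $A*A$ and $F^*\overline R$, and $\nabla Rm$ via the Codazzi-type identities in terms of $\nabla A * A$ and $F^*(\bar\nabla\overline R)$ (plus $\iota_{\mathfrak n}$ terms).

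Applying all of this to \eqref{eqn evolution equation for nabla^k A}: the principal term $\partial^2_h Q(A,g)*\nabla A*\nabla^{k+1}A$, upon differentiation, yields $\partial^2_h Q(A,g)*\nabla A*\nabla^{k+2}A$ (the desired leading term at level $k+1$) plus terms where $\nabla$ hits the low-order factors, namely $\partial^3_h Q(A,g)*\nabla A*\nabla A*\nabla^{k+1}A$ and $\partial^2_h Q(A,g)*\nabla^2 A*\nabla^{k+1}A$; these have total derivative count $\le (k+1)+2$ and coefficients in $\mathscr F$ (now involving $\partial^l_h Q$ for $l\le k+3 = (k+1)+2$), so they land in $\mathcal L_{k+1}$. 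Differentiating $\mathcal L_k$, a sum of terms \eqref{eqn finite linear combination in mathcal L_k}, and adding the commutator terms above, again produces sums of the same schematic shape with the derivative-count bound incremented by one: $i_1+\cdots+i_j \le (k+2)+1 = (k+1)+2$, and the new factors $\nabla\mathscr F$ introduce $\bar\nabla$ of the ambient curvature (raising $j\le k+1$ to $j\le k+2 = (k+1)+1$) and one more $\nabla A$ as in the formula for $\nabla\Lambda$, consistent with the stated form of $\mathscr F$ at level $k+1$. One should also use the Gauss map evolution $D_t\mathfrak n = F_*\nabla f$ (Lemma~\ref{lem basic evolution of extrinsic flow}(3)) to see that $\partial_t$ of the pulled-back $\iota_{\mathfrak n}$-curvature terms stays within $\mathscr F * (\text{stuff})$, since $\partial_t F_i = \nabla_i f \cdot \mathfrak n + \dots$ only introduces $\partial_h Q*\nabla A$ and ambient-curvature factors.

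The main obstacle I anticipate is purely \emph{bookkeeping}: verifying that every term generated by differentiation and commutation genuinely respects the two quantitative constraints — the total derivative count $i_1+\cdots+i_j\le k+2$ and the orders $l\le k+2$, $j\le k+1$, $m\le j+2$ appearing in $\mathscr F$ — and, in particular, checking that the only term violating the naive ``everything has $\le k$ derivatives on each $A$'' expectation is the single distinguished term $\partial^2_h Q(A,g)*\nabla A*\nabla^{k+1}A$, which is pulled out separately precisely because the quadratic-gradient term in \eqref{eqn evolution of A under extrinsic flow} is what obstructs the standard Shi estimate. There is no conceptual difficulty — no delicate estimate or limiting argument — but the induction must be organized so that the schematic notation $\mathcal L(\cdots)$, $*$, and $\mathscr F$ absorbs exactly the right terms and nothing more; I would set up a short lemma on how $\nabla$ and $\partial_t$ act on each building block ($g^{-1}$, $\partial^l_h Q(A,g)$, $F^*(\iota^m_{\mathfrak n}\bar\nabla^j\overline R)$) and then the induction becomes a mechanical check.
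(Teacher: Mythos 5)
Your proposal is correct and follows essentially the same route as the paper: induction from the $k=0$ case \eqref{eqn evolution eqn of A - simplified version}, with the inductive step carried by the commutators $[\partial_t,\nabla]=(\partial_t\Gamma)*(\cdot)$ and $[\nabla,\Delta^f]$ computed via $\nabla\Lambda=\partial^2_hQ(A,g)*\nabla A$, the Ricci identity, the Gauss equation and the formulas for $\nabla$ of pulled-back ambient tensors, followed by the same bookkeeping of derivative counts. (The appeal to $D_t\mathfrak n=F_*\nabla f$ is superfluous: in this induction $\partial_t$ never falls on the coefficient tensors $\mathscr F$, only $\nabla$ does; likewise $\nabla f$ carries no extra ``metric'' term since $\nabla g=0$.)
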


\begin{proof}
    For any $k =0,1,2,\cdots $, note  
    \begin{align} \label{eqn [partial_t - Delta, nabla] = }
        (\partial _t - \Delta^f ) \nabla ^{k+1} A &= (\partial _t - \Delta^f ) \nabla \nabla^k A \\
        &= \nabla ( \partial_t-\Delta^f)\nabla^k A + [\partial_t , \nabla] \nabla^k A + [\nabla, \Delta^f] \nabla ^kA. \notag 
    \end{align}
    We need to calculate the commutators $[\partial_t , \nabla]$ and $[\nabla, \Delta^f]$. First recall that for any tensor $\mathcal T$ on $M$, 
\begin{equation} \label{eqn commutator of nabla and partial_t}
     [\partial_t, \nabla] \mathcal T := (\partial_t \nabla - \nabla \partial_t)\mathcal T = (\partial_t \Gamma)* \mathcal T = \mathscr F^1 * \mathcal T, 
\end{equation}
where 
$$\mathscr F^1 = g^{-1} * (Q * \nabla A + \partial_h Q *A*\nabla A)$$ 
by (2) in Lemma \ref{lem basic evolution of extrinsic flow}. 

Before calculating $[\nabla, \Delta^f] $ we remark that for any $(0,q)$ tensor $S$ on $\overline M$,
\begin{align} \label{eqn nabla of ambient tensors}
    \nabla F^*S &= F^*\overline\nabla S + F^*(\iota_nS) *A, \\
    \nabla (F^*(\iota^k_{\mathfrak n} S)) &= F^*(\iota^k_{\mathfrak n} \overline \nabla S) + F^*(\iota^{k-1}_{\mathfrak n} S) *A + F^*(\iota^{k+1}_{\mathfrak n} S) *A \notag
\end{align}
 by the definition of $A$ and (4) in Lemma \ref{lem basic evolution of extrinsic flow}. Next, applying $\nabla$ on both sides of the Gauss equation   
\begin{equation} \label{eqn Gauss equation}
R = F^*\overline R  + A*A
\end{equation}
and use (\ref{eqn nabla of ambient tensors}), we obtain
\begin{equation} \label{eqn nabla of Gauss equation}
    \nabla R = F^* \bar\nabla \overline R  + F^* (\iota_{\mathfrak n} \overline R)* A + \nabla A*A.  
\end{equation}
Now we calculate $[\nabla , \Delta^f]$. Note that 
\begin{equation} \label{eqn commuting Delta_f and nabla first step}
    \Delta ^f \nabla_k - \nabla_k \Delta^f = \Lambda^{ij} (\nabla_i \nabla_j \nabla_k - \nabla_k \nabla_i \nabla_j) - (\nabla_k \Lambda^{ij}) \nabla_i \nabla_j.  
\end{equation}
By definition of $\Lambda$ and since $\nabla g = 0$, $\nabla \Lambda = \partial^2_h Q (A,g) *\nabla A$. By the Ricci identity, for any tensor $\mathcal T$ on $M$,  
\begin{equation} \label{eqn commutator of Delta and nabla}
    (\nabla_i \nabla_j \nabla_k - \nabla_k \nabla_i \nabla_j) \mathcal T = \nabla R * \mathcal T + R * \nabla \mathcal T,
\end{equation}
where $R$ is the Riemann curvature tensor of $(M, g(t))$. Together with (\ref{eqn Gauss equation}), (\ref{eqn nabla of Gauss equation}) and (\ref{eqn commuting Delta_f and nabla first step}), one obtains
\begin{align} \label{eqn commutator of Delta and nabla, version 2}
    [\nabla, \Delta^f] \mathcal T &= \partial^2_h Q(A,g)* \nabla A * \nabla^2 \mathcal T + \partial _h Q * (F^* \overline R + A*A)* \nabla \mathcal T \\
    &\ \ \ + \partial_h Q (A,g) * ( F^* \bar\nabla \overline R  + F^* (\iota_{\mathfrak n} \overline R)* A + \nabla A *A) * \mathcal T  \notag \\
    &=: \partial^2_h Q(A,g)* \nabla A * \nabla^2 \mathcal T  + \mathscr F^2 * \nabla \mathcal T + \mathscr F^3 * \mathcal T. \notag
\end{align}
Hence by (\ref{eqn [partial_t - Delta, nabla] = }) and setting $\mathscr F^4 = \mathscr F^1 + \mathscr F^3$,
\begin{align} \label{eqn induction step for (partial_t -Delta) nabla ^k A}
        (\partial _t - \Delta^f ) \nabla ^{k+1} A &= \nabla ( \partial_t-\Delta^f)\nabla^k A + \partial^2_h Q (A, g) *\nabla A*\nabla^{k+2}A \\
        &\ \ \ + \mathscr F^2*\nabla^{k+1}A + \mathscr F^4*\nabla^kA \notag 
\end{align}
Now we are ready to prove by induction. The $k=0$ case follows from (\ref{eqn evolution eqn of A - simplified version}) and (\ref{eqn dfn of mathcal L_0}). Next assume that (\ref{eqn evolution equation for nabla^k A}) holds for some $k=0, 1, \cdots$. By (\ref{eqn induction step for (partial_t -Delta) nabla ^k A}) and the induction hypothesis, 
\begin{align*}
        (\partial _t - \Delta^f ) \nabla ^{k+1} A &= \nabla (\partial^2 _h Q (A, g) * \nabla A * \nabla ^{k+1} A+ \mathcal L_k) + \partial^2_h Q (A, g) *\nabla A*\nabla^{k+2}A \\
        &\ \ \ + \mathscr F^2*\nabla^{k+1}A + \mathscr F^4*\nabla^k A \notag \\
        &= \partial^2_h Q (A, g) *\nabla A*\nabla^{k+2}A + \nabla \mathcal L_k \notag \\
        & \ \ \ + \partial^3_h Q (A, g) * (\nabla A)^2 * \nabla ^{k+1}A + \partial^2 _h Q (A, g) * \nabla^2 A * \nabla^{k+1}A  \notag \\
        &\ \ \ + \mathscr F^2*\nabla^{k+1}A + \mathscr F^4*\nabla^k A \notag \\
        &=: \partial^2_h Q (A, g) *\nabla A*\nabla^{k+2}A + \mathcal L_{k+1} \notag.
\end{align*}
Using the induction hypothesis, $\nabla (\partial^l _h Q(A, g)) = \partial^{l+1}_h Q(A, g)* \nabla A$ and (\ref{eqn nabla of ambient tensors}), one sees that $\mathcal L_{k+1}$ is a finite linear combination of tensors of the form 
$$ \mathcal F * \nabla^{i_1} A * \cdots * \nabla^{i_j } A, $$
where $i_1, \cdots, i_j \in \{0,1,\cdots, k+1\}$ and $i_1 + \cdots +i_j \le k+3$. Hence the Proposition is proved by induction. 
\end{proof}

\section{Curvature estimates of extrinsic flow} Let $F: M\times [0,T] \to \overline M$ be a solution to the extrinsic geometric flow (\ref{eqn extrinsic flow}) such that 
\begin{equation} \label{eqn section 3 |A|le L}
    |A(x, t)| \le L,
\end{equation}
and 
\begin{equation} \label{eqn section 3 dist (kappa , partial Omega) ge delta}
    \operatorname{dist} (\kappa(x, t) , \partial \Omega) \ge \delta 
\end{equation}
on $M\times [0,T]$. 

Let $K \subset \Omega$ be the compact subset 
\begin{equation} \label{eqn dfn of K}
    K  = \{ \kappa \in \Omega : |\kappa| \le L \text{ and } \operatorname{dist} (\kappa, \partial \Omega\} \ge \delta\}.
\end{equation}
By (\ref{eqn section 3 |A|le L}), (\ref{eqn section 3 dist (kappa , partial Omega) ge delta}) we have $\kappa (x, t)\in K$ for all $(x, t) \in M\times [0,T]$. 

\begin{prop} \label{prop (h, g) in K_SS}
    Let $F: M \times [0,T]\to \overline M$ be a family of immersions which satisfies (\ref{eqn extrinsic flow}), (\ref{eqn section 3 |A|le L}) and (\ref{eqn section 3 dist (kappa , partial Omega) ge delta}). Then there is $\lambda>1$ depending only on $L, n, q, T, \delta$ and an oriented smooth atlas $\mathscr A$ of $M$ such that the following holds: for all $x\in M$, there is an oriented smooth local charts $(U, (x^1, \cdots, x^n))$ of $\mathscr A$ such that in this chart,
    \begin{equation*} 
    (h_{ij}(x, t), g_{ij}(x, t)) \in K^\lambda_{SS}
    \end{equation*}
    for all $t$, where $K_{SS}^\lambda$ is defined in (\ref{eqn dfn of K_lambda}). 
\end{prop}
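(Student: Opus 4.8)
The plan is a Gr\"onwall estimate for the induced metric along the flow, combined with the observation that one half of the required inclusion is automatic. Fix $x\in M$. The principal curvatures $\kappa(x,t)$ are the eigenvalues of the Weingarten map $g^{-1}h$, equivalently of $g^{-1/2}hg^{-1/2}=\mathcal D(h,g)$; hence, in \emph{any} oriented chart around $x$, the inclusion $\kappa(x,t)\in K$ — which holds for all $t$ by (\ref{eqn section 3 |A|le L}), (\ref{eqn section 3 dist (kappa , partial Omega) ge delta}) and the definition (\ref{eqn dfn of K}) of $K$ — gives $\mathcal D(h_{ij}(x,t),g_{ij}(x,t))\in K_S$, and in particular $(h_{ij}(x,t),g_{ij}(x,t))\in\Omega_{SS}$ by the $GL(n,\mathbb R)$-invariance of $\Omega_{SS}$ (conjugate $(\mathcal D(h,g),I)$ by $g^{1/2}$). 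Thus the only quantity sensitive to the choice of chart is the two-sided bound $\lambda^{-1}\delta_0\le g\le\lambda\delta_0$, and it suffices to produce, for each $x$, a chart in which this holds at the point $x$ for all $t\in[0,T]$, with $\lambda$ independent of $x$.

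Let $\mathscr A$ be the atlas of $M$ consisting of the geodesic normal coordinate charts of $(M,g(0))$ centered at the points of $M$, chosen orientation-compatibly (possible since $M$ is oriented); this is a smooth oriented atlas and every $x\in M$ lies in the chart centered at $x$, in which $g_{ij}(x,0)=\delta_{ij}$. Work in this chart and write $G(t)=(g_{ij}(x,t))$, $H(t)=(h_{ij}(x,t))$, so $G(0)=\delta_0$. By Lemma~\ref{lem basic evolution of extrinsic flow}(1), $\tfrac{d}{dt}G=2fH$. Since $\kappa(x,t)\in K$ and $q$ is continuous, $|f(x,t)|=|q(\kappa(x,t))|\le C_0:=\max_{\kappa\in K}|q(\kappa)|$, a finite constant depending only on $q,L,\delta,n$. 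Moreover $g^{-1}h$ is self-adjoint with respect to $g$ with eigenvalues $\kappa_i(x,t)$, $|\kappa_i(x,t)|\le L$; hence for every $\xi\in\mathbb R^n$, writing $u_\xi(t)=g_{ij}(x,t)\xi^i\xi^j=G(t)\xi\cdot\xi>0$, we get $|H(t)\xi\cdot\xi|=|h_{ij}(x,t)\xi^i\xi^j|\le L\,u_\xi(t)$.

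Combining these, $|u_\xi'(t)|=2|f(x,t)|\,|H(t)\xi\cdot\xi|\le 2C_0L\,u_\xi(t)$, so $(\ln u_\xi)'\in[-2C_0L,2C_0L]$ and Gr\"onwall yields $e^{-2C_0Lt}u_\xi(0)\le u_\xi(t)\le e^{2C_0Lt}u_\xi(0)$. Since $u_\xi(0)=\delta_0\xi\cdot\xi=|\xi|^2$, this reads $e^{-2C_0LT}\delta_0\le G(t)\le e^{2C_0LT}\delta_0$ for all $t\in[0,T]$. Setting $\lambda:=\max\{2,e^{2C_0LT}\}$, which depends only on $L,n,q,T,\delta$, we obtain $\lambda^{-1}\delta_0\le g_{ij}(x,t)\le\lambda\delta_0$; together with the first paragraph this gives $(h_{ij}(x,t),g_{ij}(x,t))\in\mathcal D^{-1}(K_S)\cap\{(X,g)\in\Omega_{SS}:\lambda^{-1}\delta_0\le g\le\lambda\delta_0\}=K^\lambda_{SS}$ by (\ref{eqn dfn of K_lambda}) (and $K^\lambda_{SS}$ is compact by Proposition~\ref{prop K_{SS} is a compact set}).

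There is no serious obstacle here; the argument is elementary once two points are in place: (a) that membership in $\mathcal D^{-1}(K_S)$ is chart-independent and follows at once from the principal curvature bound, so that adapting coordinates is needed only to control the metric; and (b) that in a fixed chart the coordinate entries $h_{ij}(x,t)$ are bounded by $L\,g_{ij}(x,t)$ because the Weingarten map is self-adjoint with respect to $g$ with eigenvalues in $[-L,L]$ — this is exactly what closes the Gr\"onwall inequality. The mild bookkeeping is to confirm that $C_0$, hence $\lambda$, depends only on the admissible quantities $L,n,q,T,\delta$.
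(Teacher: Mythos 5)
Your proof is correct and follows essentially the same route as the paper: choose charts in which the induced metric is Euclidean-comparable at one fixed time slice, then use $\partial_t g_{ij}=2fh_{ij}$ together with $|f|\le C_0$ and $|h(\xi,\xi)|\le L\,g(\xi,\xi)$ (a Gr\"onwall argument) to propagate the two-sided bound over $[0,T]$, the membership in $\mathcal D^{-1}(K_S)$ being chart-independent and immediate from $\kappa(x,t)\in K$. The only difference is cosmetic: you normalize at $t=0$ via normal coordinates centered at $x$, so the bound $\lambda^{-1}\delta_0\le g\le\lambda\delta_0$ is obtained only at the chart's center (which is all the statement literally requires), whereas the paper normalizes $g(\cdot,T)$ within a factor $2$ of $\delta_0$ on the whole chart and hence gets the bound on all of $U$, the slightly stronger form it later invokes in (\ref{eqn g equivalent to delta_0}).
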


\begin{proof}
Let $\mathscr A$ be a smooth atlas of $M$ such that for each smooth local chart $(U, (x^i))$ in $\mathscr A$, we have
$$ \frac{1}{2} \delta_0 \le g(x, T) \le 2\delta_0,$$
where $\delta_0$ is the Euclidean metric. Since $K$ is compact, $f(x, t) = q (\kappa (x, t))$ is uniformly bounded. By (1) in Lemma \ref{lem basic evolution of extrinsic flow} and that $|A|\le L$, $|\partial_t g_{ij}|$ is also uniformly bounded. Hence there is $\lambda_1 >0$ depending on $n$, $\delta$, $L$, $q$ and $T$ such that 
\begin{equation} \label{eqn uniform equivalence of metrics}
    \lambda_1^{-1} g (x, T) \le g (x, t) \le \lambda_1 g (x, T)
\end{equation}
for all $t \in [0,T]$ and all $x$ in the local chart. As a result, in $(U, (x^i))$ we have 
\begin{equation} \label{eqn g equivalent to delta_0} 
\lambda^{-1} \delta_0 \le g(x, t) \le  \lambda \delta_0,
\end{equation}
where $\lambda = 2\lambda_1$. Together with $\kappa (x, t)\in K$, we have $(h_{ij} (x, t) , g_{ij}(x, t)) \in K^\lambda_{SS}$.
\end{proof}

The goal of this section is to prove two higher derivative estimates of the second fundamental form. Let $B_0 (x_0, a)$ be a geodesic ball in $(M, g_0)$ centered at $x_0$ with radius $a>0$. By (\ref{eqn section 3 |A|le L}), the speed $|f|$ is uniformly bounded by $\overline C$. Hence the image of $F (B_0 (x_0, a) \times [0,T])$ lies in the geodesic ball $B=B_{\bar g} (p_0, a + \overline C T)$ of $(\overline M, \bar g)$, where $p_0  =F(x_0, 0)$.  

Let $B^{loc}_j$, $j=0, 1, \cdots, $ be the local curvature bounds of $(\overline M, \bar g)$ in $B$. That is,
\begin{equation} \label{eqn |bar nabla^k bar R| le B_k}
    |\overline \nabla ^j \overline R (p)| \le B^{loc}_j, \ \ \text{ on } B_{\bar g} (p_0 , a + \overline CT). 
\end{equation}

In the following, we assume that the calculations are done in oriented local charts described in Proposition \ref{prop (h, g) in K_SS}. Using that $K^\lambda_{SS}$ is compact, $q$ is monotone and (\ref{eqn g equivalent to delta_0}), there is $\theta >0$ such that 
    \begin{equation} \label{eqn Lambda is equivalent to g}
        \theta g^{-1}(x, t) \le \Lambda(x, t) \le \frac{1}{\theta} g^{-1} (x, t)
    \end{equation}
    for all $(x, t)$. Also, by compactness of $K^\lambda_{SS}$ there are positive constants $Q_l$, $l=0, 1, \cdots$, such that 
    \begin{equation} \label{eqn partial^l_h Q uniformly bounded by Q_k}
        |\partial^l_h Q(A, g)| \le Q_l,
    \end{equation}
    and the constants $Q_1, Q_2,\cdots$ depend only on $Q, L$ and $\delta$. In this section, we use $\widetilde C_k$, $k=0,1,\cdots$, to denote positive constants that depend only on $n, T, L, \delta, \theta$, $B_i$ for $i=0,1, \cdots, k+1$ and $Q_j$ for $j=0,1,\cdots, k+2$. The constants $\widetilde C_k$ might change from line to line.

\begin{thm} \label{thm higher order gradient estimates of A}
    Let $F: M \times [0,T] \to \overline M$ be a solution to the extrinsic geometric flow (\ref{eqn extrinsic flow}), where $f$ satisfies (\ref{eqn f = q (kappa)}) for some symmetric, smooth and monotone curvature function $q : \Omega \to \mathbb R$. Assume that $F$ satisfies (\ref{eqn section 3 |A|le L}), (\ref{eqn section 3 dist (kappa , partial Omega) ge delta}) and that $|\nabla A|\le L$. Then for each $k=2, 3, \cdots$, there is a positive constant $L_k$ depending on $k, n, L, \delta, T, Q, a$ and $B_j^{loc}$ in (\ref{eqn |bar nabla^k bar R| le B_k}) for $j=0, 1, \cdots, k+1$ such that 
    \begin{equation} \label{eqn higher order gradient estimates of h}
        |\nabla ^{k} A | \le \frac{L_k}{t^{\frac{k-1}{2}}}, \ \ \text{ on }B_0\left(x_0, \frac{a}{2}\right) \times (0,T].
    \end{equation} 
\end{thm}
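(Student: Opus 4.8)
The plan is to prove the estimate by a localized Bernstein-type argument, closely following Shi's original method for the Ricci flow (as adapted to extrinsic flows in e.g. \cite{HP}). We have from Proposition \ref{prop evolution equation of nabla^k A} the evolution equation
\begin{equation*}
(\partial_t - \Delta^f) \nabla^k A = \partial^2_h Q(A,g) * \nabla A * \nabla^{k+1} A + \mathcal L_k,
\end{equation*}
where $\mathcal L_k$ is the tensor-product expression described there, with coefficient $\mathscr F$ built from the quantities $\partial^l_h Q(A,g)$ (bounded by $Q_l$ thanks to Proposition \ref{prop (h, g) in K_SS} and (\ref{eqn partial^l_h Q uniformly bounded by Q_k})), powers of $g^{-1}$ (bounded by (\ref{eqn g equivalent to delta_0})), and pullbacks of $\overline \nabla^j \overline R$ (bounded by $B^{loc}_j$ via (\ref{eqn |bar nabla^k bar R| le B_k})). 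Together with (\ref{eqn Lambda is equivalent to g}), which gives uniform parabolicity of $\Delta^f$, this puts us in a situation formally identical to the MCF case, and the proof will be an induction on $k$.

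The base of the induction is the hypothesis $|\nabla A| \le L$ (and $|A| \le L$), so the first genuine step is $k=2$. I would introduce the standard auxiliary function on a fixed parabolic cylinder: for a cutoff $\varphi$ supported in $B_0(x_0,a)$ with $\varphi \equiv 1$ on $B_0(x_0,a/2)$, $|\nabla \varphi|^2 \le C\varphi$, $|\Delta^f \varphi| \le C$, consider something of the form
\begin{equation*}
G = t\left(\varphi |\nabla^2 A|^2 + N |\nabla A|^2\right) + M |A|^2
\end{equation*}
for constants $N, M$ to be chosen large depending on the allowed constants. Computing $(\partial_t - \Delta^f)G$ using the evolution equations for $\nabla^2 A$, $\nabla A$, $A$ and the product rule (the key algebraic point being that the good negative term $-2t\varphi \Lambda^{kl}\nabla_k(\nabla^2 A)\cdot \nabla_l(\nabla^2 A) \le -2\theta t\varphi |\nabla^3 A|^2$ from parabolicity absorbs, via Cauchy--Schwarz and Young's inequality, all the terms containing $\nabla^3 A$ — these arise from $\Delta^f(\varphi|\nabla^2 A|^2)$, from the cross term $\nabla\varphi \cdot \nabla|\nabla^2 A|^2$, and from the $\partial^2_h Q * \nabla A * \nabla^3 A$ term in the evolution of $\nabla^2 A$), one shows that on the set where $G$ is large one has $(\partial_t - \Delta^f)G \le 0$. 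The term $\partial^2_h Q(A,g)*\nabla A * \nabla^{k+1}A$ in $\mathcal L_k$-type remainders is where the extrinsic-flow difficulty lies (and is precisely why the hypothesis $|\nabla A|\le L$ is needed): since $|\partial^2_h Q| \le Q_2$ and $|\nabla A|\le L$, this contributes a term bounded by $C|\nabla^{k+1}A|$ which is again absorbed by the parabolic good term. A maximum-principle argument on $M \times [0,T]$ — valid in the non-compact setting because $|A|, |\nabla A|$ are globally bounded and, by the just-proven bound plus the evolution equations, the relevant quantities stay controlled, so one can either invoke completeness with a further spatial cutoff or Ecker--Huisken-type reasoning — then yields $G \le C$, hence $|\nabla^2 A|^2 \le C/t$ on $B_0(x_0,a/2)$, i.e. the $k=2$ case.

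For the inductive step, assuming (\ref{eqn higher order gradient estimates of h}) for all orders up to $k$, I would run the same argument with
\begin{equation*}
G = t^{k}\left(\varphi |\nabla^{k+1} A|^2 + N t^{-1}|\nabla^k A|^2\right) + \text{(lower order terms)},
\end{equation*}
where now the lower-order derivatives $\nabla^j A$, $j \le k$, appearing in $\mathcal L_{k}$ and $\mathcal L_{k-1}$ are no longer bounded by constants but by $L_j t^{-(j-1)/2}$; one checks that all resulting terms have the right homogeneity in $t$ so that the time weights match up and the bad powers of $t^{-1}$ are exactly compensated by the explicit factors of $t$ in $G$ (this bookkeeping is the routine but slightly tedious part, using $i_1 + \cdots + i_j \le k+2$ in Proposition \ref{prop evolution equation of nabla^k A} to see that the total number of derivatives, hence the total negative power of $t$, never exceeds what $t^k$ can absorb after one application of Young's inequality against the good term $-\theta t^k \varphi |\nabla^{k+2}A|^2$). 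Applying the maximum principle again gives $G \le L_{k+1}$ and hence the bound at order $k+1$.

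The main obstacle I anticipate is twofold. First, the bookkeeping of the time-weights and the constants through the induction must be done carefully so that $L_k$ depends only on the stated quantities (in particular not on higher curvature bounds $B_j$ with $j > k+1$, nor on $|\nabla^k A|$-type quantities which are not assumed bounded) — this requires being precise about which $\partial^l_h Q$ and which $\overline\nabla^j \overline R$ actually enter $\mathcal L_k$, and here Proposition \ref{prop evolution equation of nabla^k A} is exactly tailored to give $l \le k+2$ and $j \le k+1$. Second, justifying the maximum principle on the non-compact manifold $M$: unlike the compact case one cannot simply evaluate at a maximum point, so one must either localize further in space using completeness of $g(T)$ and the uniform bounds (\ref{eqn section 3 |A|le L}), or appeal to the fact that the quantity $G$ is bounded a priori on each fixed cylinder and argue on that cylinder with a spatial cutoff, being careful that the cutoff's contributions are controlled by the already-established lower-order bounds. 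Everything else — the choice of cutoff, the repeated applications of Cauchy--Schwarz and Young's inequality to absorb $\nabla^{k+2}A$ terms, and the product-rule computation of $(\partial_t - \Delta^f)G$ — is standard Shi-type technique.
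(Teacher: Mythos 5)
Your strategy is of the same Shi--Bernstein type as the paper's: the same induction on $k$, the same evolution equations from Proposition \ref{prop evolution equation of nabla^k A}, the same use of (\ref{eqn Lambda is equivalent to g}), (\ref{eqn partial^l_h Q uniformly bounded by Q_k}), (\ref{eqn |bar nabla^k bar R| le B_k}), and the same exploitation of the hypothesis $|\nabla A|\le L$ to absorb the extrinsic-flow term $\partial^2_hQ(A,g)*\nabla A*\nabla^{k+1}A$ into the parabolic good term. Where you genuinely differ is the auxiliary function: the paper uses Shi's multiplicative quantity $\psi=(\mathcal A+t^{k-1}|\nabla^kA|^2)\,t^k|\nabla^{k+1}A|^2$, derives the Riccati-type inequality $(\partial_t-\Delta^f)\psi\le \tfrac1t\bigl(-\psi^2/\widetilde C_{k+1}+\widetilde C_{k+1}\bigr)$, and then quotes the cutoff/maximum-principle argument of \cite{ChenYin} (with the cutoff of \cite{ChenZhu}); the quadratic negative term is what makes the bound manifestly independent of the behaviour of $|\nabla^{k+1}A|$ near $t=0$ and of the cutoff contributions at an interior maximum. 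Your additive cutoff function with a direct maximum principle can be made to close, but only if it is engineered at exactly the points you flag but do not resolve.

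Two concrete repairs are needed. First, your base-case function $G=t(\varphi|\nabla^2A|^2+N|\nabla A|^2)+M|A|^2$ carries the wrong weight on the companion term: differentiating the explicit factor $t$ produces $\varphi|\nabla^2A|^2$, while the good term generated by $tN|\nabla A|^2$ is only $-2N\theta\, t|\nabla^2A|^2$ and cannot absorb it as $t\to0$; the companion must carry one power of $t$ less than the top term, i.e. $G=t\varphi|\nabla^2A|^2+N|\nabla A|^2$, which is what your inductive ansatz $t^k\varphi|\nabla^{k+1}A|^2+Nt^{k-1}|\nabla^kA|^2$ correctly does. Second, the claim that $(\partial_t-\Delta^f)G\le0$ wherever $G$ is large is not automatic: the computation produces inhomogeneous terms of size $C/t$ (from $\partial_t$ hitting the $t$-weights, from $|\nabla^kA|^2\le L_k^2t^{-(k-1)}$, and from the term $t^k|\nabla^{k+1}A|\,t^{-(k+2)/2}$ after Young's inequality) as well as cutoff-leakage terms $Ct^k|\nabla^{k+1}A|^2$ carrying no factor of $\varphi$ (from $\Delta^f\varphi$ and from $|\nabla\varphi|^2/\varphi$), and a linear-in-$G$ good term does not beat these pointwise. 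The fix within your framework is to (i) leave the companion $Nt^{k-1}|\nabla^kA|^2$ un-localized, with $\operatorname{supp}\varphi$ inside the larger ball on which the induction bound holds, so that its good term $-N\theta\, t^{k-1}|\nabla^{k+1}A|^2$ is available without a $\varphi$ factor and absorbs the leakage terms after multiplying by $t\le T$, and (ii) organize the result as $(\partial_t-\Delta^f)G\le\tfrac1t(-cG+C)$ and run the maximum principle on the compact cylinder $\overline{B_0(x_0,a)}\times[\epsilon,T]$, where the spatial boundary contributes $G\le NL_k^2$ and the initial slice contributes $NL_k^2+o(1)$ because $\epsilon^k\sup_{\overline B}|\nabla^{k+1}A|^2(\cdot,\epsilon)\to0$ by smoothness of $F$ on compact sets. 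With these adjustments your route works and trades the paper's product function and its $-\psi^2/t$ term for this extra bookkeeping at the initial slice and at the cutoff; the dependence of $L_k$ on the stated constants comes out the same either way, since both arguments use only Proposition \ref{prop evolution equation of nabla^k A} with $l\le k+2$ and $j\le k+1$.
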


\begin{proof}
    This is proved using Proposition \ref{prop evolution equation of nabla^k A} and is standard. We only derive the necessary inequalities, and refer the readers to the proof of \cite[Theorem 3.2]{ChenYin} for more details.  Using (\ref{eqn Lambda is equivalent to g}), (\ref{eqn partial^l_h Q uniformly bounded by Q_k}), $|A|\le L$, $|\nabla A|\le L$ and Proposition \ref{prop evolution equation of nabla^k A}, 
    \begin{align} \label{eqn evolution ineq for |G|^2}
        ( \partial_t-\Delta^f)|\nabla A|^2&=\partial_t g ^{-1}*(g^{-1})^2*(\nabla A)^2 + 2\langle(\partial_t-\Delta^f)\nabla A,\nabla A\rangle   \notag \\
        &\ \ \ -2\Lambda^{pq} \langle\nabla_p\nabla A, \nabla_q\nabla A\rangle  \notag\\ 
        &\le \widetilde C_1 ( |\nabla A|^2 +|\nabla ^2 A| +1) - 2\theta |\nabla^2 A|^2 \\ 
        &\le -\theta |\nabla^2 A|^2 + \widetilde C_1, \notag
    \end{align}
    Similarly we have
    \begin{align*}
        (\partial_t-\Delta^f)|\nabla ^2 A|^2&=\partial_ tg^{-1}*(g^{-1})^3* (\nabla^2 A)^2 + 2\langle(\partial_t-\Delta^f)\nabla^2 A,\nabla^2 A\rangle  \\
        &\ \ \ -2\Lambda^{pq} \langle \nabla_p \nabla^2 A, \nabla_q \nabla^2 A\rangle  \\ &\le \widetilde C_2 (|\nabla^2 A|^2 +|\nabla^2 A||\nabla^3 A| +|\nabla^2 A||\mathcal L_1|)- 2\theta |\nabla^3 A|^2\\
        &\le \widetilde C_2 (|\nabla^2 A|^2  +|\nabla^2 A||\mathcal L_2|)-\theta |\nabla^3 A|^2.
    \end{align*}
    By (\ref{eqn finite linear combination in mathcal L_k}) and $|A|\le L$, $|\nabla A|\le L$,
    $$|\mathcal L_2|\le \widetilde C_2 (|\nabla ^2 A|^2 + |\nabla^2 A| + 1).$$
    Hence
    \begin{align} \label{eqn evolution ineq for |nabla G|^2}
        (\partial_t-\Delta^f)|\nabla^2 A|^2 \le -\theta |\nabla^3 A|^2 + \widetilde C_2 (|\nabla^2 A|^2 +1). 
    \end{align}
    Based on the evolution inequality (\ref{eqn evolution ineq for |G|^2}), (\ref{eqn evolution ineq for |nabla G|^2}) and a cutoff argument from \cite{ChenZhu}, one can proved as in \cite{ChenYin} that
\begin{align*}
    |\nabla^2 A|\le \frac{L_2}{t^{1/2}}, \ \ \text{ on } B_0 \left(x_0, \left( \frac 12 + \frac 1{2^3}a\right)\right) \times (0,T],
\end{align*}
for some positive constant $L_1$. Next assume that for $i=2, \cdots, k$, 
\begin{align*}
    |\nabla^{i}A|\le \frac{L_i}{t^{\frac{i-1}{2}}} \ \ \ \text{ on } B_0 \left( x_0, \left( \frac 12 + \frac{1}{2^{i+1}} \right)a \right) \times (0,T],
\end{align*}
for some positive constants $L_i, i=2,\cdots,k$.

By Proposition \ref{prop evolution equation of nabla^k A} and the induction hypothesis we have
\begin{align}\label{eqn evolution inequality of |nabla^k G|^2}
    &\quad (\partial_t-\Delta^f)|\nabla^{k}A|^2 \notag \\
    =& \ \partial_tg^{-1}* (g^{-1})^{k+1} *(\nabla^{k} A)^2 \notag \\
    &+ 2 \langle (\partial_t-\Delta^f) \nabla^{k}A,\nabla^{k} A \rangle -2 \Lambda^{pq} \langle \nabla_p\nabla^{k}A ,\nabla_q\nabla^{k}A \rangle \notag  \\
    \le&\ \widetilde C_{k}|\nabla^{k} A|^2+ 2\langle \partial^2_hQ(A,g)*\nabla A*\nabla^{k+1} A+\mathcal L_{k}, \nabla^{k} A\rangle - 2\theta|\nabla^{k+1}A|^2 \\
    \le&\ \widetilde C_{k}|\nabla^{k} A|^2+ 2\langle \mathcal L_{k}, \nabla^{k} A\rangle - \theta|\nabla^{k+1} A|^2 \notag \\
    \le &\ \widetilde C_{k}|\nabla^{k}A|^2 - \theta |\nabla^{k+1} A|^2\notag \\ 
    &+ \widetilde C_{k} \sum_{ i_1,\cdots,i_j\le k, i_1+\cdots+i_j\le k+2}|\nabla^{i_1} A| \cdots |\nabla^{i_j} A||\nabla^{k}A|  \notag \\
    \le & - \theta|\nabla^{k+1}A|^2 + \frac{\widetilde C_{k}}{t^{k-1}} + \widetilde C_{k}\sum_{ i_1,\cdots,i_j\le k, i_1+\cdots+i_j\le k+2} \frac{1}{t^{(i_1 + \cdots + i_j-j+k)/2}}. \notag \\
    \le & - \theta|\nabla^{k+1}A|^2 + \frac{\widetilde C_{k}}{t^{k}}. \notag
\end{align}
Similarly, 
\begin{align}\label{eqn evolution inequality of |nabla^k+1 G|^2 1}
    ( \partial_t-\Delta^f)|\nabla^{k+1}A|^2 \le \widetilde C_{k+1} |\nabla^{k+1} A|^2+ 2\langle \mathcal L_{k+1}, \nabla^{k+1} A\rangle  - \theta |\nabla^{k+2}A|^2. 
    \end{align}
observing that 
\begin{align*}
    |\mathcal L_{k+2}|&\le \widetilde C_{k+1} \sum_{i_1,\cdots,i_j\le k+1, i_1+\cdots+i_j\le k+3}|\nabla^{i_1}A| \cdots |\nabla^{i_j} A|\notag \\ 
    &\le \widetilde C_{k+1} (|\nabla^2 A| + |\nabla A|^2 + |\nabla A|+1) |\nabla^{k+1} A| \\
    &\ \ \ + \widetilde C_{k+2}\sum_{i_1,\cdots,i_j\le k, i_1+\cdots+i_j\le k+3}|\nabla^{i_1} A| \cdots |\nabla^{i_j} A|  \notag\\
    &\le \widetilde C_{k+1} \left( \frac{|\nabla^{k+1} A|}{t^{1/2}} + \frac{1}{t^{\frac{k+2}{2}}}\right) \notag,
\end{align*}
together with (\ref{eqn evolution inequality of |nabla^k+1 G|^2 1}) we have 
\begin{align}\label{eqn evolution inequality of |nabal^k+1C|^2 2}
    (\frac{\partial}{\partial t}-\Delta^f)|\nabla^{k+1} A|^2\le -\theta |\nabla^{k+2} A|^2
    + \widetilde C_{k+1} \left( \frac{|\nabla^{k+1} A|^2}{t^{1/2}} + \frac{|\nabla^{k+1}A|}{t^{\frac{k+2}{2}}}\right).
\end{align}
Let 
\begin{align*}
    \psi=(\mathcal A+t^{k-1}|\nabla^{k}A|^2)t^k|\nabla^{k+1}A|^2,
\end{align*}
where $\mathcal A$ is a positive constant to be chosen later. By (\ref{eqn evolution inequality of |nabla^k G|^2}) and (\ref{eqn evolution inequality of |nabal^k+1C|^2 2}) we have
\begin{align*}
    (\partial_t-\Delta^f)\psi=& \frac{(k-1)t^{k-1}}{t}|\nabla^{k}A|^2(t^{k} |\nabla^{k+1} A|^2)\notag \\
    &+(\mathcal A+t^{k-1}|\nabla^{k} A|^2) k\frac{t^{k}}{t} |\nabla^{k+1}A|^2\notag \\
    &+t^{2k-1} |\nabla^{k+1} A|^2 (\partial_t-\Delta^f)|\nabla^{k}A|^2\notag \\
    &+(\mathcal A+t^{k-1}|\nabla^{k} A|^2) t^{k}(\partial_t-\Delta^f)|\nabla^{k+1} A|^2 \notag \\
    &-2t^{2k-1} \langle \nabla |\nabla^{k} A|^2, \nabla |\nabla^{k+1}A|^2\rangle \notag \\
    \le & \frac{2k-1}{t}\psi  + t^{2k-1}|\nabla^{k+1} A|^2(- \theta|\nabla^{k+1} A|^2 + \frac{\widetilde C_{k}}{t^{k}}) \\
    &+t^{k} (\mathcal A+t^{k-1}|\nabla^{k} A|^2)\left(-\theta|\nabla^{k+2} A|^2+ \widetilde C_{k+1} \left( \frac{|\nabla^{k+1} A|^2}{t^{1/2}} + \frac{|\nabla^{k+1} A|}{t^{\frac{k+2}{2}}}\right) \right) \notag \\ 
    &+ 8 t^{2k-1} |\nabla^{k} A| |\nabla^{k+1} A|^2 | \nabla^{k+2} A| \notag 
\end{align*}
Observe that by the induction hypothesis,
\begin{align*}
    8t^{2k-1}  |\nabla^{k} A| |\nabla^{k+1} A|^2 | \nabla^{k+2} A|&\le \widetilde C_{k} t^{2k-1} t^{-\frac{k-1}{2}} |\nabla^{k+1} A|^2 |\nabla ^{k+2} A|\\
    &= \widetilde C_{k} (t^{k-1/2}|\nabla^{k+1}A|^2)( t^{k-1/2}t^{-\frac{k-1}{2}}  |\nabla ^{k+2} A|)\\
    &\le \frac{\theta}{2} t^{2k-1}  |\nabla^{k+1} A|^4+ \widetilde C_{k} t^{k} |\nabla^{k+2} A|^2.   
\end{align*}
and 
$$ t^{k} \frac{|\nabla^{k+1} A|}{t^{\frac{k+2}{2}} } = \frac{1}{t} t^{\frac{k}{2}} |\nabla^{k+1}A| \le \frac{1}{2t} ( t^{k} |\nabla^{k+1} A|^2 +1). $$
Now choose $\mathcal A$ large so that $\mathcal A \theta > \widetilde C_{k}$. Hence 
\begin{align*}
    (\partial_t-\Delta^f)\psi\le & \frac{2k-1}{t}\psi - \frac{\theta}{2t} ( t^{k} |\nabla^{k+1} A|^2 )^2 + \frac{\widetilde C_{k}}{t}  \\
    &+\frac{\widetilde C_{k+1}}{t}  (\mathcal A+t^{k-1}|\nabla^{k} A|^2) \left( t^{k}|\nabla^{k+1} A|^2 + 1\right)\notag 
\end{align*}
and by the definition of $\psi$ and the induction hypothesis, 
\begin{equation*}
    \frac{1}{\mathcal A+L^2_k}\psi \le t^{k}|\nabla^{k+1}A|^2 \le \frac{1}{\mathcal A}\psi.
\end{equation*}
Thus we have the following estimate
\begin{equation*}
    (\partial_t-\Delta^f)\psi \le \frac{1}{t}\left(-\frac{1}{\widetilde C_{k+1} }\psi^2 + \widetilde C_{k+1}\right).
\end{equation*}
This inequality is all we need in order to apply the argument in the proof of \cite[Theorem 3.2]{ChenYin} to conclude
\begin{equation*}
    |\nabla^{k+1}A| \le \frac{L_{k+1}}{t^{\frac{k}{2}}} \ \ \text{ on } B_0 \left( x_0, \left( \frac 12 + \frac 1{2^{k+2}}\right) a\right) \times (0,T],
\end{equation*}
for some positive constant $L_{k+1}$. We have finished the proof by induction. 
\end{proof}

In the evolution equation (\ref{eqn evolution of A under extrinsic flow}) for the second fundamental form $A$, the term 
$$ \frac{\partial ^2 Q}{\partial h_{kl} \partial h_{pq}} \nabla_ih_{kl} \nabla_j h_{pq},$$
which is quadratic in $\nabla A$, prevents us from applying the standard estimates starting at $\nabla A$. This term is identically zero for the MCF, where $Q(h, g) = g^{ij} h_{ij}$. 

It is well-known that the concavity of $q$ can be used to better control the second fundamental form (see Remark 1 (1) in \cite{AMZ}). The following theorem is another manifestation of this observation.  

\begin{thm} \label{thm curvature estimates when F is convex and q concave}
    Let $F : M\times [0,T]\to \overline M$ be a convex solution to (\ref{eqn extrinsic flow}), where the curvature function is smooth, symmetric, monotone and concave. Assume that $F$ satisfies (\ref{eqn section 3 |A|le L}) and (\ref{eqn section 3 dist (kappa , partial Omega) ge delta}). For each $k = 1, 2, \cdots$, there is a positive constant $L^c_k$ depending on $k,n, L, \delta, q, a$ and $B^{loc}_j$ in (\ref{eqn |bar nabla^k bar R| le B_k}) for $j=0,1,\cdots, k+1$ such that 
    \begin{equation} \label{eqn curvature estimates when F is convex and q concave}
        |\nabla^k A|\le  \frac{L^c_k}{t^{k/2}}, \ \ \text{ on } B_0 \left( x_0, \frac{a}{2}\right) \times (0,T].  
    \end{equation}
\end{thm}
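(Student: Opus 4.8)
The plan is to argue by induction on $k$, following the scheme of the proof of Theorem~\ref{thm higher order gradient estimates of A} (and of \cite[Theorem~3.2]{ChenYin}, \cite{ChenZhu}), with the concavity of $q$ and the convexity of $F$ replacing the a~priori bound $|\nabla A|\le L$ used there. As before I would work in the oriented charts of Proposition~\ref{prop (h, g) in K_SS}, so that $(h_{ij},g_{ij})\in K^\lambda_{SS}$, $\theta g^{-1}\le\Lambda\le\theta^{-1}g^{-1}$ and $|\partial^l_h Q(A,g)|\le Q_l$, and would establish \eqref{eqn curvature estimates when F is convex and q concave} on the exhausting balls $B_0\big(x_0,(\tfrac12+2^{-k-1})a\big)\times(0,T]$; $\widetilde C_k$ will denote a constant of the admissible dependence, changing from line to line.

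For the base case $k=1$ the new ingredient is an evolution inequality for $|A|^2$. Expanding $(\partial_t-\Delta^f)|A|^2$ from \eqref{eqn evolution eqn of A - simplified version} with $\nabla g=0$ gives
\begin{equation*}
(\partial_t-\Delta^f)|A|^2 = 2\langle\partial^2_h Q(A,g)*\nabla A*\nabla A,\ A\rangle + 2\langle\mathcal L,A\rangle + \partial_t g^{-1}*g^{-1}*A*A - 2\Lambda^{pq}\langle\nabla_p A,\nabla_q A\rangle,
\end{equation*}
and since $F$ is convex I would write $A=\sum_a v_a\otimes v_a$ for a $g$-orthogonal frame, so that the first term equals $\sum_a \tfrac{\partial^2 Q}{\partial h_{ij}\partial h_{kl}}(\nabla_{v_a}A)_{ij}(\nabla_{v_a}A)_{kl}\le 0$ by the concavity \eqref{eqn q concave iff Q concave}; the middle two terms are $\le\widetilde C_1$ because $|A|\le L$, $\kappa\in K$ and the ambient data is bounded, and the last term is $\le-2\theta|\nabla A|^2$. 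This yields
\begin{equation} \label{plan:A2}
(\partial_t-\Delta^f)|A|^2\le -2\theta|\nabla A|^2 + \widetilde C_1 .
\end{equation}
Next I would use Proposition~\ref{prop evolution equation of nabla^k A} with $k=1$, the Codazzi and commutation identities, and again the concavity \eqref{eqn q concave iff Q concave} together with the convexity of $F$, to show that the contributions to $\langle(\partial_t-\Delta^f)\nabla A,\nabla A\rangle$ from the terms $\partial^2_h Q*\nabla A*\nabla^2A$ and $\partial^3_h Q*\nabla A*\nabla A*\nabla A$ in the evolution of $\nabla A$ can be disposed of \emph{without leaving any term quartic in $\nabla A$}, i.e.
\begin{equation} \label{plan:nablaA2}
(\partial_t-\Delta^f)|\nabla A|^2 \le -\theta|\nabla^2A|^2 + \widetilde C_1(|\nabla A|^2+1).
\end{equation}
Given \eqref{plan:A2} and \eqref{plan:nablaA2}, I would then apply the maximum-principle cutoff argument of \cite{ChenZhu}, \cite[Theorem~3.2]{ChenYin} to $\psi=\eta^2\big(t|\nabla A|^2+\beta|A|^2\big)$, with $\eta$ a fixed spatial cutoff supported in $B_0(x_0,a)$ and $\beta$ so large that $1+\widetilde C_1 T-2\beta\theta<0$, to conclude $|\nabla A|\le L^c_1\, t^{-1/2}$ on $B_0(x_0,\tfrac a2)\times(0,T]$; it is precisely the boundedness of $|A|^2$ that makes the term $\beta|A|^2$ harmless here.

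For the inductive step, assuming \eqref{eqn curvature estimates when F is convex and q concave} for all $i\le k$, the term $\partial^2_h Q*\nabla A*\nabla^{k+1}A$ of Proposition~\ref{prop evolution equation of nabla^k A} is no longer dangerous: the base case gives $|\nabla A|\le L^c_1 t^{-1/2}$, so $2\langle\partial^2_h Q*\nabla A*\nabla^{k+1}A,\nabla^k A\rangle\le\tfrac\theta2|\nabla^{k+1}A|^2+\widetilde C_k t^{-1}|\nabla^k A|^2$, while the other terms of $\mathcal L_k$ are controlled, via the induction hypothesis, by the appropriate negative powers of $t$. Proceeding exactly as in the proof of Theorem~\ref{thm higher order gradient estimates of A}, but with every $t$-exponent increased by the extra $t^{-1/2}$ per derivative that the base case forces, I would derive evolution inequalities for $|\nabla^k A|^2$ and $|\nabla^{k+1}A|^2$ from which $\psi=\big(\mathcal A+t^k|\nabla^k A|^2\big)\,t^{k+1}|\nabla^{k+1}A|^2$ (with $\mathcal A$ large) satisfies $(\partial_t-\Delta^f)\psi\le t^{-1}\big(-\widetilde C_{k+1}^{-1}\psi^2+\widetilde C_{k+1}\big)$; the cutoff argument then gives $|\nabla^{k+1}A|\le L^c_{k+1}\,t^{-(k+1)/2}$ on $B_0\big(x_0,(\tfrac12+2^{-k-2})a\big)\times(0,T]$, completing the induction.

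The step I expect to be the main obstacle is \eqref{plan:nablaA2}: one must check that the concavity of $q$ and the convexity of $F$ together \emph{cancel}, rather than merely absorb at the cost of a quartic term, every contribution to $(\partial_t-\Delta^f)|\nabla A|^2$ that is cubic in $\nabla A$ or of the form $\nabla^2A*(\nabla A)^2$. This is exactly the point where the computation leaves the MCF case ($\partial^2_h Q\equiv0$) and the setting of Theorem~\ref{thm higher order gradient estimates of A} (where $|\nabla A|\le L$ is assumed); once \eqref{plan:nablaA2} is in hand, the rest is the Shi-type bootstrap of Theorem~\ref{thm higher order gradient estimates of A} with the $t$-weights shifted by the base-case rate.
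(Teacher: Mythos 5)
Your treatment of $|A|^2$ coincides with the paper's entire explicit argument: the only place the paper uses convexity and concavity is to show, diagonalizing $A$ at a point, that $\frac{\partial^2 Q}{\partial h_{ij}\partial h_{kl}}(A,g)\nabla^p h_{ij}\nabla^q h_{kl}A_{pq}=\sum_p\kappa_p\,\frac{\partial^2 Q}{\partial h_{ij}\partial h_{kl}}(A,g)\nabla^p h_{ij}\nabla^p h_{kl}\le 0$ by \eqref{eqn q concave iff Q concave}, whence $(\partial_t-\Delta^f)|A|^2\le-2\theta|\nabla A|^2+\widetilde C_0$; your decomposition $A=\sum_a v_a\otimes v_a$ is the same computation. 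The genuine gap is exactly at the step you flag as the main obstacle: your base case requires the inequality $(\partial_t-\Delta^f)|\nabla A|^2\le-\theta|\nabla^2A|^2+\widetilde C_1(|\nabla A|^2+1)$, i.e.\ that the concavity of $q$ and the convexity of $F$ \emph{cancel} every quartic-in-$\nabla A$ contribution, and you give no proof of this. It does not follow from the hypotheses. By Proposition~\ref{prop evolution equation of nabla^k A} with $k=1$, the evolution of $\nabla A$ contains the terms $\partial^2_hQ(A,g)*\nabla A*\nabla^2A$ and $\partial^3_hQ(A,g)*(\nabla A)^3$, which after pairing with $\nabla A$ give contributions of size $|\nabla A|^2|\nabla^2A|$ and $|\nabla A|^4$. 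Concavity is a sign condition on the Hessian of $Q$ in $h$ alone; it is effective in the $|A|^2$ computation only because there the $\partial^2_hQ$ term appears as a quadratic form in $\nabla_pA$ contracted against the nonnegative tensor $A$. The $\nabla A$-level terms have no such structure (one involves $\nabla^2A$, the other involves $\partial^3_hQ$, about which concavity says nothing), so no cancellation is available. Without it your cutoff function $\eta^2(t|\nabla A|^2+\beta|A|^2)$ cannot close the argument either: an additive coupling extracts from the $|A|^2$ inequality only the quadratic good term $-2\beta\theta|\nabla A|^2$, which cannot dominate a surviving term $c\,t|\nabla A|^4$. Hence the base case $|\nabla A|\le L^c_1t^{-1/2}$ is not established.

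For comparison, the paper claims no such cancellation. Its proof consists of the $|A|^2$ inequality above together with the statement that the higher estimates then follow as in \cite[Theorem 3.2]{ChenYin}: the quartic gradient terms are retained and absorbed, not cancelled, by running the Shi/Chen--Zhu scheme of Theorem~\ref{thm higher order gradient estimates of A} with the $|A|^2$ inequality as the starting point of the bootstrap, the quantity $t|\nabla A|^2$ being coupled \emph{multiplicatively} with (a function of) $|A|^2$ in the test function so that the good term $-2\theta|\nabla A|^2$ is multiplied by $t|\nabla A|^2$ and supplies the negative quartic needed against the reaction terms. Your inductive step for $k\ge1$ (using only the already-established rate $|\nabla A|\le L^c_1t^{-1/2}$ to make $\partial^2_hQ*\nabla A*\nabla^{k+1}A$ harmless, and shifting the $t$-weights of Theorem~\ref{thm higher order gradient estimates of A} accordingly) is consistent with the paper's intended argument; the proposal would be repaired by replacing the unproved cancellation with this absorption mechanism for the first derivative estimate.
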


\begin{proof} The proof is very similar to that of Theorem \ref{thm higher order gradient estimates of A}, or \cite[Theorem 3.2]{ChenYin}, and we merely point out how the convexity of $F$ and convacity of $q$ are used: note that
\begin{align*}
    (\partial_t - \Delta^f) |A|^2 &= \partial_t g^{-1} * g^{-1} * A^2 + 2\langle (\partial_t - \Delta ^f )A, A\rangle - 2\Lambda^{pq} \langle \nabla_p A, \nabla_qA\rangle \\
    &\le \widetilde C_0  + 2\langle (\partial_t - \Delta ^f )A, A\rangle - 2\theta |\nabla A|^2. 
\end{align*}
Using (\ref{eqn evolution of A under extrinsic flow}), one has 
\begin{align*}
\langle (\partial_t - \Delta ^f )A, A\rangle = \frac{\partial^2 Q}{\partial h_{ij} \partial h_{kl}} (A, g) \nabla^p h_{ij} \nabla ^q h_{kl} A_{pq} + \mathscr F, 
\end{align*}
where 
$$ \mathscr F = \mathscr F(g^{-1}, A, Q(A, g), \partial_h Q (A, g), F^* \overline R, F^* \iota^2_{\mathfrak n} \overline R, F^* (\iota_{\mathfrak n} \bar\nabla \overline R)).$$
Hence 
\begin{align*}
\langle (\partial_t - \Delta ^f )A, A\rangle \le \frac{\partial^2 Q}{\partial h_{ij} \partial h_{kl}} (A, g) \nabla^p h_{ij} \nabla ^q h_{kl} A_{pq} + \widetilde C_0, 
\end{align*}
On the other hand, assume that $A$ is diagonalized at a point, then 
\begin{align*} 
\frac{\partial^2 Q}{\partial h_{ij} \partial h_{kl}} (A, g) \nabla^p h_{ij} \nabla ^q h_{kl} A_{pq} = \sum_p \kappa_p \frac{\partial^2 Q}{\partial h_{ij} \partial h_{kl}} (A, g) \nabla^p h_{ij} \nabla ^p h_{kl} \le 0
\end{align*}
since $\kappa_i \ge 0$ and $Q$ is concave by (\ref{eqn q concave iff Q concave}). Hence $\langle (\partial_t - \Delta ^f )A, A\rangle \le  \widetilde C_0$ and
\begin{align*}
    (\partial_t - \Delta^f )|A|^2 \le -2\theta |\nabla A|^2 + \widetilde C_0. 
\end{align*}
From here, together with Proposition \ref{prop evolution equation of nabla^k A}, one can obtain the higher derivative estimates (\ref{eqn curvature estimates when F is convex and q concave}) as in the proof of \cite[Theorem 3.2]{ChenYin}. 
\end{proof}

\section{Comparing two immersions}
In this section, let $F, \widetilde F : M\times [0,T]\to \overline M$ be two solutions to (\ref{eqn extrinsic flow}) which satisfy (\ref{eqn |A| le L}), (\ref{eqn dist (kappa, partial Omega) > delta}) and $F(\cdot, T)= \widetilde F (\cdot, T)$. In the following sections we derive estimates that are used in the proof of both Theorem \ref{thm backward uniqueness of extrinsic geometric flow} and Theorem \ref{thm backward uniqueness of convex extrinsic geometric flow with concave speed}. Hence we assume that $F$, $\widetilde F$ both satisfies the following condition: 
\begin{itemize}
    \item [(I)] either $F$, $\widetilde F$ satisfies (\ref{eqn |nabla A| le L}), or both $F$, $\widetilde F$ are convex ($A\ge0$ and $\widetilde A\ge 0$) and $q$ is concave. 
\end{itemize}

The following proposition follows directly from Theorem \ref{thm higher order gradient estimates of A} and Theorem \ref{thm curvature estimates when F is convex and q concave}. 

\begin{prop} \label{prop higher order curvature estimates on [e, T]}
Assume that $F$, $\widetilde F$ satisfies (\ref{eqn |A| le L}), (\ref{eqn dist (kappa, partial Omega) > delta}) and condition (I). Assume that the Riemann curvature tesnor $\overline R$ is uniformly bounded up to order $k+1$: 
\begin{equation} \label{eqn |nabla^j overline R| le B_j}
    |\overline \nabla^j \overline R|\le B_j, \ \ \text{ on } \overline M,
\end{equation}
for $j=0, 1, \cdots, k+1$. Then for all $e \in (0,T)$, there are positive constants $L_1^e, \cdots, L_k^e$ such that 
\begin{equation} \label{eqn higher order curvature estimates on [e, T]}
    |\nabla^j A | + |\widetilde \nabla^j \widetilde A|\le L_j^e,\ \ \text{ on } M\times [e, T]
\end{equation}
for $j=0, \cdots, k$. $L_j^e$ depends only on $k, n, L, \delta, q, e$ and $B_0, \cdots, B_{k+1}$. 
\end{prop}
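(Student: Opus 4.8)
The plan is to deduce Proposition \ref{prop higher order curvature estimates on [e, T]} as a direct consequence of the two interior Shi-type estimates already established, namely Theorem \ref{thm higher order gradient estimates of A} (in the first alternative of condition (I)) and Theorem \ref{thm curvature estimates when F is convex and q concave} (in the second alternative). The only real work is to upgrade those \emph{local} interior estimates on half-balls $B_0(x_0, a/2) \times (0,T]$ to a \emph{global} estimate on $M \times [e,T]$ that is uniform in $x_0$, and to check that the constants depend only on the claimed data.

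First I would fix a parameter $e \in (0,T)$ and a basepoint $x_0 \in M$. Running the flow backwards in time, the uniform speed bound $|f| = |q(\kappa)| \le \overline C$ (which holds because $\kappa(x,t)$ lies in the fixed compact set $K$ of \eqref{eqn dfn of K} and $q$ is continuous) shows, exactly as in the discussion preceding \eqref{eqn |bar nabla^k bar R| le B_k}, that $F(B_0(x_0,a) \times [0,T])$ and $\widetilde F(B_0(x_0,a) \times [0,T])$ both lie in a fixed ambient geodesic ball of radius $a + \overline C T$. Under the global hypothesis \eqref{eqn |nabla^j overline R| le B_j} we may simply take $B_j^{loc} = B_j$ for all $j \le k+1$, so the ambient curvature bounds entering Theorems \ref{thm higher order gradient estimates of A} and \ref{thm curvature estimates when F is convex and q concave} are $x_0$-independent. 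I would also fix the radius $a$ once and for all (say $a = 1$, or any convenient value; the statement allows $L_j^e$ to depend on $a$, though with the global curvature bound this dependence is harmless and can be absorbed). Applying the appropriate interior estimate to $F$ and to $\widetilde F$ at the point $x_0$ then gives, for every $j = 1, \dots, k$,
\begin{equation*}
    |\nabla^j A(x_0, t)| \le \frac{C_j}{t^{(j-1)/2}} \quad \text{or} \quad |\nabla^j A(x_0,t)| \le \frac{C_j}{t^{j/2}}, \qquad t \in (0,T],
\end{equation*}
with $C_j$ depending only on $k, n, L, \delta, q$ and $B_0, \dots, B_{k+1}$ (and the fixed $a$), and similarly for $\widetilde \nabla^j \widetilde A$. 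Since $x_0 \in M$ was arbitrary and none of these constants saw $x_0$, the bounds hold pointwise on all of $M \times (0,T]$. Restricting to $t \ge e$ and bounding $t^{-(j-1)/2} \le e^{-(j-1)/2}$ (resp. $t^{-j/2} \le e^{-j/2}$) yields \eqref{eqn higher order curvature estimates on [e, T]} with $L_j^e := (C_j + \widetilde C_j)\, e^{-j/2}$, which depends only on $k, n, L, \delta, q, e$ and $B_0, \dots, B_{k+1}$ as claimed; the $j=0$ case is just \eqref{eqn |A| le L}.

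The one point that needs a word of care — and the closest thing to an obstacle — is the bookkeeping on which hypotheses of the interior theorems are actually in force. Theorem \ref{thm higher order gradient estimates of A} requires, in addition to \eqref{eqn section 3 |A|le L} and \eqref{eqn section 3 dist (kappa , partial Omega) ge delta}, the extra bound $|\nabla A| \le L$; this is exactly the first alternative of condition (I), which also supplies $|\widetilde \nabla \widetilde A| \le L$ via \eqref{eqn |nabla A| le L}. Theorem \ref{thm curvature estimates when F is convex and q concave} instead requires $F$, $\widetilde F$ convex and $q$ concave, which is the second alternative of (I). In either branch, hypotheses \eqref{eqn section 3 |A|le L} and \eqref{eqn section 3 dist (kappa , partial Omega) ge delta} for both immersions follow from \eqref{eqn |A| le L} and \eqref{eqn dist (kappa, partial Omega) > delta}. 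Thus under condition (I) exactly one of the two interior estimates applies to both $F$ and $\widetilde F$, and in both cases one obtains a bound of the form $|\nabla^j A| \le C_j t^{-j/2}$ on $(0,T]$ (the sharper $t^{-(j-1)/2}$ decay in the first case only helps). The proof is then a two-line invocation, and I would write it as such rather than reproduce any of the cutoff/maximum-principle machinery, which is internal to Theorems \ref{thm higher order gradient estimates of A} and \ref{thm curvature estimates when F is convex and q concave}.
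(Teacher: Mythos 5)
Your proposal is correct and follows essentially the same route as the paper, which simply notes that the Proposition "follows directly" from Theorem \ref{thm higher order gradient estimates of A} and Theorem \ref{thm curvature estimates when F is convex and q concave}; your added bookkeeping (taking $B_j^{loc}=B_j$ under the global bound \eqref{eqn |nabla^j overline R| le B_j}, fixing $a$, noting the $x_0$-independence of the constants, splitting according to the two alternatives of condition (I), and restricting to $t\ge e$) is exactly the intended, routine verification.
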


We will be estimating the norms of various geometric quantities arising from $F, \widetilde F$, and we use the following convention: for quantities that only depends on $F$ (resp. $\widetilde F$) like $\nabla^k A$, (resp. $\widetilde \nabla^k A$), we use the norm calculated using $g$ (resp. $\tilde g$). For terms that involve both $F, \widetilde F$, for example $\widetilde A - A$ and $\widetilde \Gamma-\Gamma$, we use the norm calculated with respect to $g$. For example, 
$$ |\widetilde A - A|^2 =  g^{jl} g^{ik}(\tilde h_{ij} - h_{ij})(\tilde h_{kl} - h_{kl}).$$
For sections of the pullback bundles $N$, $\widetilde N$, we use the induced metric from $\bar g$. We also use $|\cdot |_0$ to denote the euclidean norms.

Next we recall the smooth homotopy between $F$ and $\widetilde F$ constructed in \cite{LeeMa} (see also \cite{SongWang}). Using (\ref{eqn extrinsic flow}) and that $F=\widetilde F$ at $t=T$, there is $\bar \epsilon>0$ depending on $q,\delta, L$ so that the following holds: 
\begin{itemize}
    \item [(i)] for all $(x, t) \in [T-\bar\epsilon, T]$, the distance between $F$ and $\widetilde F$ defined by 
\begin{equation*}
    d (x, t) = d_{\overline M} (F(x, t), \widetilde F(x, t))
\end{equation*}
satisfies 
\begin{equation} \label{eqn d < min given bar epsilon small}
    d < \min\left\{ i_0, 1, \frac{1}{\sqrt{2 B_0}}\right\},
\end{equation} 
\end{itemize}
here $i_0$ is the injectivity radius of $(\overline M, \bar g)$ and $B_0$ is the uniform upper bound of $\overline R$. We need (\ref{eqn d < min given bar epsilon small}) in order to apply some of the estimates in \cite{LeeMa}. 

Next, we recall the smooth homotopy between $F$ and $\widetilde F$ constructed in \cite{LeeMa} (see also \cite{SongWang}). By definition of $i_0$ and completeness of $(\overline M, \bar g)$, for all $(x, t)$ with $t\in [T-\bar\epsilon, T]$, there is an unique shortest geodesic $\gamma : [0,1] \to \overline M$ in $(\overline M, \bar g)$ such that 
$$ \gamma(0) = p:= F(x, t), \ \ \ \gamma(1) = \tilde p := \widetilde F(x, t). $$
For each $(x, t) \in M \times [T-\bar\epsilon, T]$, let $v\in T_pM$ be such that $\gamma(s) = \exp_p (tv)$. Note that $v$ is a section of the pullback bundle $N:=F^*T\overline M$, and the collections of these smooth geodesic 
\begin{equation} \label{eqn dfn of smooth homotopy gamma}
    \gamma : M\times [T-\bar\epsilon,T] \times [0,1] \to \overline M, \ \ (x, v, s)\mapsto \exp_{p} (sv)
\end{equation} 
forms a smooth homotopy between $F$ and $\widetilde F$.

In the following we work primarily on the interval $[T-\bar\epsilon, T]$. Let $e$ in Proposition \ref{prop higher order curvature estimates on [e, T]}) be chosen such that $e<T-\bar\epsilon$. We use $C$ to denote constants that depend only on $n, q,L, \delta, e$, and for each $k=0, 1\cdots$, $C_k$ are constants that depend on $n$, $q$, $L$, $\delta$ and $B_j$ for $j=0, \cdots, k+1$ (that is, the same dependence as $L_k^e$ in (\ref{eqn higher order curvature estimates on [e, T]})). Note that the constants $C, C_0, C_1, \cdots$ might change from line to line. 


Let
\begin{equation*}
    P = P(x, t) : T_{\tilde p} \overline M \to T_p \overline M
\end{equation*}
be the parallel transport along $-\gamma$. $P$ is a bundle map between the pullback bundles $\widetilde{N}:= \widetilde F^* T\overline M \to N$, and its action can be extended to 
$$T^{p,q}M \otimes \widetilde{N} \to T^{p,q}M \otimes N$$
by $P (X \otimes \tilde{\mathfrak s}):= X\otimes P\tilde{\mathfrak s}$, for all $(p, q)$-tensor $X$ on $M$ and $\tilde{\mathfrak s} \in \Gamma (M, \widetilde{N})$. The same setting was used in \cite{LeeMa}, where the authors study the uniqueness and backward uniqueness of mean curvature flow of any co-dimension in general ambient manifolds. 

First we recall the following estimates derived in \cite{LeeMa}. 

\begin{lem} \label{lem basic inequality from Lee Ma} 
Let $F, \widetilde F$ be as above. Then
\begin{itemize}
    \item [(1)] $|F_*|, |\widetilde F_*| \le C_0$, 
    \item [(2)] $|\tilde g^{-1}- g^{-1}|\le C |\tilde g-g|$, 
    \item [(3)] $|\tilde g-g| \le C |P\widetilde F_* - F_*|$,
    \item [(4)] $|D_tv| \le C_0 d ^2|F_t| + 2|P\widetilde F_t - F_t|$,  
    \item [(5)] $|D_t P|\le C_0 d (|F_t| + |P\widetilde F_t - F_t|)$.
\end{itemize}
\end{lem}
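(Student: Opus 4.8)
The statement to prove is Lemma 4.6 (the one labeled \texttt{lem basic inequality from Lee Ma}), which collects five estimates involving the homotopy geodesic $\gamma$, the vector field $v$, and the parallel transport $P$. Since the lemma is explicitly stated as "estimates derived in \cite{LeeMa}," the intended proof is largely a matter of translating the cited results into the present notation and checking that the constants have the claimed dependence. Let me sketch how I'd organize this.

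The plan is to set up the geometry carefully first, then treat each estimate in turn, with the derivative estimates (4) and (5) being the real content. First I would note that because of \eqref{eqn d < min given bar epsilon small} we are working inside a region where $d < \min\{i_0, 1, 1/\sqrt{2B_0}\}$, so standard comparison estimates for the exponential map and parallel transport along short geodesics apply with constants depending only on $B_0$ (hence on the ambient bounds). For (1), the bound $|F_*|, |\widetilde F_*| \le C_0$ is essentially the statement that the induced metrics $g, \tilde g$ are uniformly controlled; this follows from the curvature estimates in Proposition~\ref{prop higher order curvature estimates on [e, T]} together with the flow equation \eqref{eqn extrinsic flow} and Lemma~\ref{lem basic evolution of extrinsic flow}(1), which bounds $\partial_t g_{ij}$ in terms of $f$ and $A$. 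For (2), $|\tilde g^{-1} - g^{-1}| \le C|\tilde g - g|$ is a purely algebraic fact once both $g$ and $\tilde g$ are bounded above and below, using $\tilde g^{-1} - g^{-1} = -\tilde g^{-1}(\tilde g - g) g^{-1}$. For (3), $|\tilde g - g| \le C|P\widetilde F_* - F_*|$ follows by writing $\tilde g_{ij} - g_{ij} = \bar g(\widetilde F_i, \widetilde F_j) - \bar g(F_i, F_j)$, inserting $P$ (which is an isometry) to rewrite $\bar g(\widetilde F_i, \widetilde F_j) = \bar g(P\widetilde F_i, P\widetilde F_j)$, and then expanding the difference of the two inner products as a sum of terms each containing a factor $P\widetilde F_k - F_k$, using (1) to bound the other factors.

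The estimates (4) and (5) are where I expect the actual work to lie, and I would lean on the Jacobi field / second variation machinery from \cite{LeeMa}. For (4): $v$ is characterized by $\gamma(s) = \exp_p(sv)$ with $\gamma(1) = \tilde p$, so $v = (\exp_p)^{-1}(\tilde p)$ and $D_t v$ measures how this initial velocity changes as the endpoints $p = F(x,t)$ and $\tilde p = \widetilde F(x,t)$ move with velocities $F_t$ and $\widetilde F_t$ respectively. The derivative $D_t v$ decomposes into a contribution from moving the base point $p$ and a contribution from moving the target $\tilde p$; the first is controlled by $|F_t|$ with a correction of order $d^2$ coming from the curvature of $\overline M$ (the Jacobi equation), giving the $C_0 d^2 |F_t|$ term, while the second is the parallel-transported target velocity, which after accounting for the parallel transport yields $2|P\widetilde F_t - F_t|$ — here the point is that $P\widetilde F_t - F_t$ is exactly the "relative velocity" of the two endpoints measured in a common frame. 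For (5): $D_t P$ measures how the parallel transport operator along $\gamma(\cdot, t)$ changes as the whole geodesic varies in $t$; differentiating the parallel transport equation along the varying geodesic and integrating, the variation of $P$ is controlled by the variation field of the geodesic, which is itself bounded by $|D_t v|$ and the endpoint data, times a factor of $d$ (since $P = \mathrm{Id}$ when $d = 0$) and the curvature bound $B_0$ — this gives $C_0 d(|F_t| + |P\widetilde F_t - F_t|)$ after substituting the bound from (4).

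The main obstacle, such as it is, is bookkeeping: making sure that every appearance of $d$, $|F_t|$, and $|P\widetilde F_t - F_t|$ is tracked with the correct power and that the implicit constants genuinely depend only on the ambient curvature bound $B_0$ and the flow data $n, q, L, \delta, e$, not on anything else — in particular that no hidden dependence on second fundamental forms sneaks in through $|F_t| = |f|$ (which is controlled since $\kappa \in K$ compact) or through the frame estimates. Since the statement is quoted verbatim from \cite{LeeMa} and our geometric setup (short geodesic homotopy, parallel transport $P$ along $-\gamma$, completeness and uniform curvature bounds on $\overline M$) is identical, I would present the proof as a citation to the relevant lemmas in \cite{LeeMa}, indicating the above as the mechanism and noting only the (trivial) adjustments: namely that here $F_t = -f\mathfrak n$ with $|f|$ uniformly bounded by compactness of $K$, so $|F_t| \le C$, which can be folded into the constants if one prefers the cleaner-looking bounds; but keeping $|F_t|$ explicit as in the statement is harmless and matches \cite{LeeMa}.
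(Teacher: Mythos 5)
Your proposal is correct and matches the paper's treatment: the paper likewise disposes of (2) by the uniform equivalence of the metrics, and handles (1), (3), (4), (5) by citing the corresponding results in \cite{LeeMa} (Lemma 2.1, Proposition 3.7, Propositions 3.1 and 4.2), which is exactly the citation-plus-elementary-checks structure you describe. The only cosmetic difference is that (1) needs nothing beyond the metric equivalence \eqref{eqn uniform equivalence of metrics} (no higher curvature estimates), but that does not affect the argument.
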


\begin{proof}
A proof of (1), (3) can be found in Lemma 2.1, Proposition 3.7 of \cite{LeeMa} respectively. (2) can be proved using (\ref{eqn uniform equivalence of metrics}). (4), (5) are proved in Proposition 3.1 and 4.2 of \cite{LeeMa} respectively. 
\end{proof}

From (2), (3) in Lemma \ref{lem basic inequality from Lee Ma}, we know that the term $P\widetilde F_* - F_*$ controls the difference $\tilde g -g$ and $ \tilde g^{-1} - g^{-1}$. The following lemma shows that it also controls the difference of the unit normal vectors. 

\begin{lem} \label{lem P tilde n-n le Ptilde F_* - F_*} One has 
\begin{equation} \label{eqn P tilde n-n le Ptilde F_* - F_*}
    |P\tilde{\mathfrak n} - \mathfrak n| \le C|P\widetilde F_* - F_*|.
\end{equation}
\end{lem}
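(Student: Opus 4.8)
The plan is to express both $\tilde{\mathfrak n}$ and $\mathfrak n$ as the (suitably oriented) unit normals to the immersions and to compare them by comparing the corresponding tangent spaces, for which we already have control via $P\widetilde F_* - F_*$. More precisely, write $\mathfrak n$ as the unique unit vector in $T_p\overline M$ orthogonal to the image of $F_*$ with the correct orientation, and similarly $\tilde{\mathfrak n}$ for $\widetilde F_*$. The idea is that $P\tilde{\mathfrak n}$ is the unique unit vector in $T_p\overline M$ (with respect to $\bar g$, since $P$ is a $\bar g$-isometry) orthogonal to $P(\widetilde F_*(T_xM))$ with the correct orientation (the homotopy $\gamma$ and $P$ being orientation-preserving). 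Thus $P\tilde{\mathfrak n}$ and $\mathfrak n$ are unit normals to two $n$-planes in $T_p\overline M$ that are close to each other, so they are close.

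To carry this out I would first fix $x$ and an oriented local chart from Proposition \ref{prop (h, g) in K_SS}, and set $e_i = F_i = F_*(\partial_i)$ and $\tilde e_i = \widetilde F_*(\partial_i)$, so that $\{e_1,\dots,e_n,\mathfrak n\}$ and $\{\tilde e_1,\dots,\tilde e_n,\tilde{\mathfrak n}\}$ are positively oriented bases of $T_p\overline M$ and $T_{\tilde p}\overline M$ respectively. By Lemma \ref{lem basic inequality from Lee Ma}(1) we have $|e_i|,|\tilde e_i|\le C_0$, and by assumption (\ref{eqn |A| le L}) together with (\ref{eqn g equivalent to delta_0}) the metrics $g,\tilde g$ are uniformly bounded above and below, so $\{e_i\}$ and $\{\tilde e_i\}$ are uniformly non-degenerate frames. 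Now decompose $P\tilde{\mathfrak n}$ in the basis $\{e_1,\dots,e_n,\mathfrak n\}$: write $P\tilde{\mathfrak n} = \sum_i a^i e_i + b\,\mathfrak n$. Since $P$ is a $\bar g$-isometry and $\tilde{\mathfrak n}\perp \tilde e_j$, we get $\bar g(P\tilde{\mathfrak n}, P\tilde e_j) = 0$ for all $j$, i.e. $\sum_i a^i \bar g(e_i, P\tilde e_j) = -b\,\bar g(\mathfrak n, P\tilde e_j)$. Writing $P\tilde e_j = e_j + (P\tilde e_j - e_j) = e_j + (P\widetilde F_* - F_*)(\partial_j)$ and using $\bar g(\mathfrak n, e_j)=0$, the right side is $-b\,\bar g(\mathfrak n, (P\widetilde F_* - F_*)(\partial_j))$, which is $O(|P\widetilde F_* - F_*|)$; and the matrix $\bar g(e_i, P\tilde e_j)$ is a small perturbation of the uniformly invertible Gram matrix $g_{ij}$. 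Solving this linear system gives $|a^i| \le C|P\widetilde F_* - F_*|$ for each $i$. Since $|P\tilde{\mathfrak n}| = 1$ we get $b^2 = 1 - \sum_{i,j} a^i a^j g_{ij} + (\text{higher order}) = 1 + O(|P\widetilde F_* - F_*|)$, hence $|b| = 1 + O(|P\widetilde F_* - F_*|)$ (after possibly shrinking $\bar\epsilon$ so the perturbation is genuinely small).

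It remains to fix the sign of $b$, and this is where the orientation hypotheses enter. Since $\gamma$ is a short geodesic (by (\ref{eqn d < min given bar epsilon small}) the distance $d$ is less than the injectivity radius and is as small as we like), parallel transport $P$ along $-\gamma$ depends continuously on $(x,t)$ and is close to the identity in a suitable sense; as the homotopy $\gamma$ is orientation-compatible, $\{P\tilde e_1,\dots,P\tilde e_n, P\tilde{\mathfrak n}\}$ is a positively oriented basis of $T_p\overline M$. Comparing with the positively oriented basis $\{e_1,\dots,e_n,\mathfrak n\}$ and using that $\{e_i\}$ and $\{P\tilde e_i\}$ are close, one concludes $b>0$, hence $b = 1 + O(|P\widetilde F_* - F_*|)$ with the correct sign. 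Then
\begin{equation*}
    |P\tilde{\mathfrak n} - \mathfrak n|^2 = \sum_{i,j} a^i a^j g_{ij} + (b-1)^2 + (\text{cross terms with } \mathfrak n\perp e_i) \le C|P\widetilde F_* - F_*|^2,
\end{equation*}
which is the claimed estimate (\ref{eqn P tilde n-n le Ptilde F_* - F_*}). The main obstacle is bookkeeping rather than conceptual: one must carefully track that all the constants depend only on the allowed data (via the uniform bounds on $g,\tilde g$ and $F_*,\widetilde F_*$ from Lemma \ref{lem basic inequality from Lee Ma} and Proposition \ref{prop (h, g) in K_SS}), and one must justify that $\bar\epsilon$ can be chosen small enough that the perturbation of the Gram matrix is invertible with uniformly bounded inverse and that the sign of $b$ is pinned down by orientation — both of which follow from $d\to 0$ as $t\to T$ together with the smoothness and non-degeneracy of the frames. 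A cleaner alternative, which I would mention, is to invoke directly the analogous estimate in \cite{LeeMa} for the Gauss map in the codimension-one case, since the unit normal is exactly the (oriented) Gauss map of the hypersurface and the argument there applies verbatim.
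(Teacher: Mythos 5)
Your proposal is correct and takes essentially the same route as the paper's proof: the paper also characterizes $P\tilde{\mathfrak n}$ as the oriented unit normal to $\operatorname{span}(P\widetilde F_1,\dots,P\widetilde F_n)$ (after normalizing $g_{ij}=\delta_{ij}$ at the point), solves the resulting orthogonality system against a small perturbation of the identity/Gram matrix to bound the tangential part, and uses the positivity of $\det(P\widetilde F_1,\dots,P\widetilde F_n,P\tilde{\mathfrak n})$ to fix the sign of the normal component. The only cosmetic difference is how smallness of $|P\widetilde F_*-F_*|$ is arranged: rather than shrinking $\bar\epsilon$ (note that $d\to0$ alone does not control the differentials uniformly on non-compact $M$), it is simpler to observe, as the paper implicitly does, that since $P\tilde{\mathfrak n}$ and $\mathfrak n$ are unit vectors the estimate is trivial whenever $|P\widetilde F_*-F_*|$ is bounded below, so one may assume it is as small as needed.
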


\begin{proof}
It suffices to check the inequality at each point $x\in M$. Since $| P \widetilde F_* - F_*|$ is independent of coordinates chosen, we may assume that $g_{ij} = \delta_{ij}$ at $p$, $\bar g_{\alpha\beta} = \delta_{\alpha\beta} $ at $p = F(x, t)$. Hence 
\begin{equation*}
| P \widetilde F_* - F_*|^2 = | P\widetilde F_1 - F_1|^2 + \cdots + | P\widetilde F_n - F_n|^2. 
\end{equation*}

Since $P$ is a linear isometry, $\mathfrak n_1:= P\tilde{\mathfrak n}$ is normal to the the $n$-dimensional plane spanned by $P\widetilde F_1, \cdots, P\widetilde F_n$. For each $i=1, \cdots, n$, define
$$ e_i = F_i, \ \  v_i = P\widetilde F_i.$$
Also, define $e_{n+1} = \mathfrak n$. Identitying $T_p N$ with $\mathbb R^{n+1}$ with the positively oriented, orthonormal basis $(e_1, \cdots, e_{n+1})$, the inequality (\ref{eqn P tilde n-n le Ptilde F_* - F_*}) is equivalent to the following: Let $v_1, \cdots, v_n$ be linearly independent vectors in $\mathbb R^{n+1}$ such that $\mathfrak n_1$ is the unique unit normal of $\operatorname{span} (v_1,\cdots, v_n)$ with $\det (v_1, \cdots, v_n, \mathfrak n_1)>0$. Then there is $C_0>0$ such that  
\begin{equation} \label{eqn |n_1 - e_n+1|^2 le sum | v_i - e_i|^2}
| \mathfrak n_1 - e_{n+1}|^2 \le C \sum_{i=1}^n | v_i - e_i|^2. 
\end{equation}
To prove (\ref{eqn |n_1 - e_n+1|^2 le sum | v_i - e_i|^2}), it suffices to assume that $\epsilon^2:= \sum_i  | v_i - e_i|^2$ is small. Write 
\begin{equation*}
    v_i = (w_i, \alpha_i), \ \ \mathfrak n_1 = (\vec x, \beta), 
\end{equation*}
where $w_i, \vec x \in \mathbb R^n$ and $\alpha_i, \beta\in \mathbb R$. Since $\langle \mathfrak n_1 , v_i\rangle = 0$ for $i=1, \cdots, n$, we have 
$$w_i \cdot \vec x = -\beta \alpha_i \ \ i=1, \cdots n. $$
Let $A$ be the $n\times n$ matrix $ \begin{bmatrix} w_1^T & \cdots & w_n^T\end{bmatrix}$, then 
$$ A \vec x = -\beta \vec \alpha = -\beta \begin{bmatrix} \alpha_1 \\ \vdots \\ \alpha_n\end{bmatrix}. $$
Let $\mathscr E$ be the $n\times n$ matrix defined by $\mathscr E = I-A$, or $A = I-\mathscr E$. Then 
$$ |\mathscr E|^2 := \sum_{i,j=1}^n \mathscr E_{ij}^2 = \sum_i |v_i -e_i|^2 - |\alpha|^2 \le \epsilon.$$
By assuming that $\epsilon$ is small, $A$ is invertible and 
$$ A^{-1} = I + \mathscr E+ \mathscr E^2 + \cdots . $$
Using $\vec x = A^{-1}(-\beta \vec \alpha)$ and that $|\beta|\le 1$, we have 
$$ |x|\le |\beta| |\vec\alpha| (1+ \epsilon + \epsilon^2 + \cdots ) = \frac{\beta |\vec \alpha|}{1-\epsilon} \le 2\epsilon.$$
Since $1=|\mathfrak n_1|^2 = |\vec x|^2 + \beta^2$, we have 
$$1-|\beta|  \le \epsilon .$$ 
Lastly since $\det (v_1, \cdots, v_n, \mathfrak n_1)>0$ and $v_i$ are closed to $e_i$ for $i=1, \cdots, n$, we must have $\beta >0$ and hence $1-\beta <\epsilon$. As a result, 
$$ |\mathfrak n_1 - e_{n+1}|^2 = |\vec x|^2 + (1-\beta)^2 <5\epsilon^2$$
and this finishes the proof of the lemma. 
\end{proof}

Lastly, we recall the notion of the restriction of ambient tensors along $F$ and some of their properties. Given a $(p, q)$ tensor $S$ on $\overline M$ and $F: M\to \overline M$, let $S|_F$ be the restriction of $S$ onto the pullback bundle $N$. Hence $S|_F$ is a section on the bundle $N^{\otimes p} \otimes (N^*)^{\otimes q}$. Since $P : \widetilde N \to N$ is a bundle isomorphism, it induces a bundle isomorphism 
$$P^* : \widetilde N^{\otimes p} \otimes  (\widetilde N^*)^{\otimes q} \to N^{\otimes p} \otimes (N^*)^{\otimes q}.$$
In \cite[Lemma 4.2]{LeeMa}, the following estimates is proved: for any ambient tensors $S$ on $\overline M$, 
\begin{equation*}
    |P^* S|_{\widetilde F} - S|_F|\le \sup |\bar\nabla S| d. 
\end{equation*}
Indeed, from the proof in \cite{LeeMa}, the supremum on the right hand side can be taken over the image of the smooth homotopy (\ref{eqn dfn of smooth homotopy gamma}). Hence for each $k = 0,1,2,\cdots$ we have 
\begin{equation} \label{eqn estimtes difference of restrictions under parallel transport}
    |P^* (\bar\nabla ^k \overline R|_{\widetilde F}) - \bar\nabla ^k \overline R|_F | \le B_{k+1} d ,
\end{equation}
where $B_{k+1}$ is the bounds in (\ref{eqn |nabla^j overline R| le B_j}).

\section{Estimates of time derivatives}
Let $F, \widetilde F$ be solutions to the extrinsic geometric flow (\ref{eqn extrinsic flow}) which satisfy (\ref{eqn |A| le L}), (\ref{eqn dist (kappa, partial Omega) > delta}), condition (I) and $F(\cdot, T) = \widetilde F(\cdot, T)$. In this section, we estimate the time derivatives of several geometric quantities which measure the distance between $F$ and $\widetilde F$.

Since we are comparing two immersions, we need uniform bounds of $\partial^l_hQ$, $l=0, 1, \cdots$ not only on $K^\lambda_{SS}$, but also on a slightly enlarged neighborhood of $K^\lambda_{SS}$: Since $K^\lambda_{SS}$ is a compact subset in $\Omega_{SS}$ by Proposition \ref{prop K_{SS} is a compact set}, there is $\bar \delta>0$ depending only on $\delta, L, q$ such that the compact set 
\begin{equation} \label{eqn dfn of K_bar delta}
    K_{\bar \delta} = \{ (X, g)\in S(n) \times S_+(n) : \operatorname{dist} ( (X, g), K^\lambda_{SS} ) \le \bar\delta\}
\end{equation}
still lies inside $\Omega_{SS}$. Using Lemma \ref{lem evolution of A under extrinsic geometric flow} and the $k=2$ case in Proposition \ref{prop higher order curvature estimates on [e, T]}, there is $\bar C = C_2>0 $ such that $|\partial_t h_{ij}| \le \bar C$. Using that $F(\cdot, T) = \widetilde F(\cdot, T)$ and choosing a smaller $\bar\epsilon$ depending on $\bar C$ if necessary, one obtain the following 
\begin{lem}
    For each $x\in M$, there is a smooth oriented local chart $(U, (x^i))\in \mathscr A$ such that 
    \begin{equation} \label{eqn (h, g), (tilde h, tilde g) lies in the same balls}
      (h_{ij}(x, t), g_{ij}(x, t)), (\tilde h_{ij} (x, t), \tilde g_{ij} (x, t)) \in B_{\bar\delta} ((h_{ij} (x, T), g_{ij}(x, T))) \subset K_{\bar\delta}
    \end{equation}
    for all $t\in [T-\bar\epsilon, T]$, where $K_{\bar\delta}$ is defined in (\ref{eqn dfn of K_bar delta}).
\end{lem}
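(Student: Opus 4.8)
The plan is to establish \eqref{eqn (h, g), (tilde h, tilde g) lies in the same balls} by a continuity-in-time argument starting from the coincidence $F(\cdot,T)=\widetilde F(\cdot,T)$. First I would fix, for each $x\in M$, the oriented local chart $(U,(x^i))\in\mathscr A$ provided by Proposition \ref{prop (h, g) in K_SS} (applied to \emph{both} $F$ and $\widetilde F$; since $F=\widetilde F$ at $t=T$ the metrics $g(x,T)$ and $\tilde g(x,T)$ agree, so the same atlas works for both flows). In this chart both $(h_{ij}(x,T),g_{ij}(x,T))=(\tilde h_{ij}(x,T),\tilde g_{ij}(x,T))$ lie in $K^\lambda_{SS}$, hence well inside $K_{\bar\delta}$, with a definite margin $\bar\delta$ before hitting $\partial K_{\bar\delta}$ (and $K_{\bar\delta}\subset\Omega_{SS}$ by the choice made just above the lemma). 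The point $(h_{ij}(x,T),g_{ij}(x,T))$ is the common center of the ball $B_{\bar\delta}$ in the statement.

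Next I would produce uniform time-derivative bounds. For the metric this is immediate: part (1) of Lemma \ref{lem basic evolution of extrinsic flow} gives $\partial_t g_{ij}=2f h_{ij}$ and $\partial_t\tilde g_{ij}=2\tilde f\tilde h_{ij}$, and since $\kappa(x,t),\tilde\kappa(x,t)\in K$ (compact) the speeds $f,\tilde f$ and the second fundamental forms are uniformly bounded, so $|\partial_t g_{ij}|,|\partial_t\tilde g_{ij}|\le C$ in the chart (using \eqref{eqn g equivalent to delta_0} to pass between the coordinate norm and the geometric one). For the second fundamental form I would invoke Lemma \ref{lem evolution of A under extrinsic geometric flow}, which expresses $\partial_t h_{ij}$ through $\Lambda^{kl}\nabla_k\nabla_l h_{ij}$, the quadratic term $\partial^2_h Q(A,g)*\nabla A*\nabla A$, lower-order curvature terms, and ambient curvature terms; combined with \eqref{eqn Lambda is equivalent to g}, \eqref{eqn partial^l_h Q uniformly bounded by Q_k}, the bounds $|A|\le L$ and the $k=2$ case of Proposition \ref{prop higher order curvature estimates on [e, T]} (which bounds $|\nabla A|$ and $|\nabla^2 A|$ uniformly on $M\times[e,T]$, once $e<T-\bar\epsilon$), this yields $|\partial_t h_{ij}|\le\bar C=C_2$ in the chart, and likewise for $\tilde h_{ij}$. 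Note the remark in the excerpt already asserts exactly this bound $\bar C=C_2$.

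With $|\partial_t h_{ij}|,|\partial_t g_{ij}|,|\partial_t\tilde h_{ij}|,|\partial_t\tilde g_{ij}|\le\bar C$ on $[T-\bar\epsilon_0,T]$ for some initial choice $\bar\epsilon_0$, I would integrate in time from $t$ up to $T$: for any $t\in[T-\bar\epsilon,T]$,
\begin{equation*}
\operatorname{dist}\big((h_{ij}(x,t),g_{ij}(x,t)),(h_{ij}(x,T),g_{ij}(x,T))\big)\le \bar C\,(T-t)\le \bar C\,\bar\epsilon,
\end{equation*}
and the identical estimate for $(\tilde h_{ij},\tilde g_{ij})$, using $\tilde h_{ij}(x,T)=h_{ij}(x,T)$, $\tilde g_{ij}(x,T)=g_{ij}(x,T)$. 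Hence if we shrink $\bar\epsilon$ (depending only on $\bar C$, and thus only on $n,q,L,\delta$ and $B_0,B_1,B_2$, together with the earlier $\bar\epsilon$-constraint) so that $\bar C\,\bar\epsilon\le\bar\delta$, both points stay in $B_{\bar\delta}\big((h_{ij}(x,T),g_{ij}(x,T))\big)$ for all $t\in[T-\bar\epsilon,T]$, which is contained in $K_{\bar\delta}$ by definition of the latter as the closed $\bar\delta$-neighborhood of $K^\lambda_{SS}$. That is precisely \eqref{eqn (h, g), (tilde h, tilde g) lies in the same balls}.

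The main obstacle is not the integration but securing the uniform bound $|\partial_t h_{ij}|\le\bar C$: the evolution equation \eqref{eqn evolution of A under extrinsic flow} contains the term quadratic in $\nabla A$, so one genuinely needs the interior gradient estimates $|\nabla A|,|\nabla^2 A|\le L^e_j$ from Proposition \ref{prop higher order curvature estimates on [e, T]}, which in turn rest on condition (I) (either the hypothesis $|\nabla A|\le L$ of Theorem \ref{thm backward uniqueness of extrinsic geometric flow}, or the convexity/concavity route of Theorem \ref{thm curvature estimates when F is convex and q concave}). One must also be slightly careful that the constant $\bar C$, and therefore the final $\bar\epsilon$, depends only on the admissible data $n,q,L,\delta$ and the low-order ambient curvature bounds, so that the lemma's implicit uniformity (one $\bar\epsilon$ working for every $x\in M$) holds; this follows because all the bounds in \eqref{eqn Lambda is equivalent to g}, \eqref{eqn partial^l_h Q uniformly bounded by Q_k} and Proposition \ref{prop higher order curvature estimates on [e, T]} are uniform in $x$, the chart in Proposition \ref{prop (h, g) in K_SS} being chosen so that $g(x,T)$ is uniformly comparable to $\delta_0$.
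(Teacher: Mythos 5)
Your proposal is correct and follows essentially the same route as the paper: bound $|\partial_t h_{ij}|$ (via the evolution equation, the bounds \eqref{eqn Lambda is equivalent to g}, \eqref{eqn partial^l_h Q uniformly bounded by Q_k} and the $k=2$ case of Proposition \ref{prop higher order curvature estimates on [e, T]}) and $|\partial_t g_{ij}|$, integrate in time from $T$ using $F(\cdot,T)=\widetilde F(\cdot,T)$, and shrink $\bar\epsilon$ so that $\bar C\,\bar\epsilon\le\bar\delta$. This is exactly the argument the paper sketches just before the lemma, with the uniformity in $x$ handled the same way through the atlas of Proposition \ref{prop (h, g) in K_SS}.
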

The above lemma implies the following estimates which will be frequently used in this and the next section.

\begin{lem} \label{lem eqn estimate of difference of partial^k Q(A,g)}
    For each $k =0, 1, \cdots$, there is a positive constant $C^Q_k$ depending on $k$, $n$, $L$, $\delta$, $q$ such that the following holds: for each $x\in M$, there is a local coordinates chart $(U, (x^i))$ around $x$ in $\mathscr A$ such that 
    \begin{equation} \label{eqn estimate of difference of partial^k Q(A,g)}
        \left| \partial^k_h Q(\tilde h_{ij}, \tilde g_{ij}) - \partial ^k_h Q (h_{ij} , g_{ij}) \right| \le C^Q_k ( |\widetilde A - A | + | P\widetilde F_* - F|). 
    \end{equation}
    for all $t\in [T-\bar\epsilon, T]$.
\end{lem}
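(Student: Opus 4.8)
The plan is to prove \eqref{eqn estimate of difference of partial^k Q(A,g)} by combining the previous lemma, which localizes $(h_{ij},g_{ij})$ and $(\tilde h_{ij},\tilde g_{ij})$ to the fixed compact set $K_{\bar\delta}\subset\Omega_{SS}$, with the elementary fact that a $C^1$ function is Lipschitz on a compact convex (or, here, sufficiently nice) set. First I would fix $x\in M$ and choose the oriented local chart $(U,(x^i))\in\mathscr A$ provided by the preceding lemma, so that for all $t\in[T-\bar\epsilon,T]$ both $(h_{ij}(x,t),g_{ij}(x,t))$ and $(\tilde h_{ij}(x,t),\tilde g_{ij}(x,t))$ lie in the ball $B_{\bar\delta}((h_{ij}(x,T),g_{ij}(x,T)))\subset K_{\bar\delta}$. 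Since this ball is convex and $\partial^k_h Q$ is smooth on $\Omega_{SS}\supset K_{\bar\delta}$, the map $(X,g)\mapsto \partial^k_h Q(X,g)$ is $C^1$ on a neighborhood of $K_{\bar\delta}$; hence its differential is bounded, say by a constant $C^Q_k$ depending only on $k$, $n$, $L$, $\delta$, $q$ (because $K_{\bar\delta}$, hence this bound, depends only on those data). The mean value theorem along the segment joining $(h,g)$ to $(\tilde h,\tilde g)$ inside $B_{\bar\delta}$ then gives
\begin{equation*}
    \left|\partial^k_h Q(\tilde h_{ij},\tilde g_{ij}) - \partial^k_h Q(h_{ij},g_{ij})\right| \le C^Q_k\left(|\tilde h_{ij}-h_{ij}|_0 + |\tilde g_{ij}-g_{ij}|_0\right),
\end{equation*}
where $|\cdot|_0$ is the Euclidean norm on the matrix entries.

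The remaining task is to convert the Euclidean entry-norms on the right into the intrinsic quantities $|\widetilde A - A|$ and $|P\widetilde F_* - F_*|$ appearing in the statement. For this I would use \eqref{eqn g equivalent to delta_0}, which holds in the charts of $\mathscr A$ and says $g$ is uniformly comparable to $\delta_0$; this converts $|\tilde h_{ij}-h_{ij}|_0$ into a constant multiple of $|\widetilde A - A|$ (the $g$-norm of the difference tensor, per the paper's norm convention), and similarly $|\tilde g_{ij}-g_{ij}|_0 \le C|\tilde g - g|$. Finally, by (3) of Lemma~\ref{lem basic inequality from Lee Ma}, $|\tilde g - g|\le C|P\widetilde F_* - F_*|$, which absorbs the metric-difference term into the asserted bound. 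Collecting constants, and enlarging $C^Q_k$ to also depend on the constants from \eqref{eqn g equivalent to delta_0} and Lemma~\ref{lem basic inequality from Lee Ma} (all of which depend only on $n$, $L$, $\delta$, $q$), yields \eqref{eqn estimate of difference of partial^k Q(A,g)}.

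I do not expect any genuine obstacle: the statement is a routine compactness-plus-Lipschitz estimate once the preceding lemma has done the real work of trapping $(h,g)$ and $(\tilde h,\tilde g)$ in a single compact subset of $\Omega_{SS}$. The only point requiring a little care is bookkeeping the dependence of $C^Q_k$: one must check that $\bar\delta$, the set $K_{\bar\delta}$, the constant $\lambda$ in $K^\lambda_{SS}$, and the comparability constants in \eqref{eqn g equivalent to delta_0} all depend only on $n$, $L$, $\delta$, $q$ (and not on $F$, $\widetilde F$ or $x$), which has already been arranged in the earlier constructions. A secondary subtlety is making sure the mean value theorem is applied inside the convex ball $B_{\bar\delta}((h(x,T),g(x,T)))$ rather than across all of $K_{\bar\delta}$, since $K_{\bar\delta}$ need not be convex; but the previous lemma conveniently places both points in the \emph{same} such ball, so the segment between them stays in $\Omega_{SS}$ and the argument goes through verbatim.
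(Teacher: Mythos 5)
Your proposal is correct and follows essentially the same route as the paper's proof: uniform bounds on $\partial^{k+1}_h Q$ from the compactness of $K_{\bar\delta}$, the mean value theorem along the segment joining $(h_{ij},g_{ij})$ and $(\tilde h_{ij},\tilde g_{ij})$ (which stays in $K_{\bar\delta}$ since both points lie in the same ball $B_{\bar\delta}$ by the preceding lemma), and then conversion of the Euclidean entry-norms via \eqref{eqn g equivalent to delta_0} and (3) of Lemma~\ref{lem basic inequality from Lee Ma}. Your extra remark about applying the mean value theorem inside the convex ball rather than across $K_{\bar\delta}$ is exactly the point the paper implicitly relies on, so nothing is missing.
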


\begin{proof}
    Since $K_{\bar\delta}$ is compact, for each $k$ there is $Q_k$ depending only on $k, L, \delta, q$ such that $|\partial^k Q| \le Q_k$ on $K_{\bar\delta}$. For each $x\in M$, let $(U, (x^i))$ be a smooth local coordinates around $x$ in $ \mathscr A$. Let $t\in [T-\bar\epsilon, T]$ be fixed. We write $h_{ij} = h_{ij} (x, t)$ and so on in this proof. By (\ref{eqn (h, g), (tilde h, tilde g) lies in the same balls}), the straight line in $\Omega_{SS}$ joining $(h_{ij}, g_{ij})$, $(\tilde h_{ij}, \tilde g_{ij})$ lies inside $K_{\bar\delta}$. Then one can apply the mean value theorem to $\partial ^k Q$ to conclude 
    \begin{equation*}
        |\partial^k_h Q(\tilde h_{ij}, \tilde g_{ij}) - \partial^k_h Q (h_{ij}, g_{ij}) |\le Q_{k+1} ( |\tilde h_{ij} - h_{ij}|_0 + | \tilde g_{ij} - g_{ij}|_0),
    \end{equation*}
    Together with (\ref{eqn g equivalent to delta_0}) and (3) in Lemma \ref{lem basic inequality from Lee Ma}, we obtain (\ref{eqn estimate of difference of partial^k Q(A,g)}). 
\end{proof}

Now we are ready to estimate the time derivatives. We start with the zeroth order term. 
\begin{lem} \label{lem estimates of D_tv} 
The vector fields $v$ along $F$ satisfies 
\begin{equation} \label{eqn estimates of D_t v}
       |D_t v| \le  C_1 (d^2 + | P\widetilde F_* -F_* | +  |\widetilde A- A|). 
\end{equation} 
\end{lem}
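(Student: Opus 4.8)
The plan is to start from the evolution equation for $v$ along $F$, which is essentially the formula in item (4) of Lemma \ref{lem basic inequality from Lee Ma}: we already have the crude bound $|D_t v| \le C_0 d^2 |F_t| + 2|P\widetilde F_t - F_t|$. Since $F$ satisfies the extrinsic flow \eqref{eqn extrinsic flow}, we have $F_t = -f \mathfrak n$ and $\widetilde F_t = -\tilde f \tilde{\mathfrak n}$, where $f = Q(h_{ij}, g_{ij})$, $\tilde f = Q(\tilde h_{ij}, \tilde g_{ij})$. Because $(h_{ij}, g_{ij}), (\tilde h_{ij}, \tilde g_{ij})$ both lie in the compact set $K_{\bar\delta}$ (by \eqref{eqn (h, g), (tilde h, tilde g) lies in the same balls}), the speed $|f| = |Q(h,g)|$ is uniformly bounded by $Q_0$, so the first term $C_0 d^2 |F_t|$ is already $\le C_1 d^2$. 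The work is entirely in estimating $|P\widetilde F_t - F_t|$.

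The key step is the decomposition
\begin{align*}
    P\widetilde F_t - F_t &= -\tilde f \, P\tilde{\mathfrak n} + f \, \mathfrak n = -\tilde f (P\tilde{\mathfrak n} - \mathfrak n) - (\tilde f - f)\mathfrak n,
\end{align*}
so that, using $|\mathfrak n| = 1$, $|P\tilde{\mathfrak n}| = 1$ and $|\tilde f| \le Q_0$,
\begin{align*}
    |P\widetilde F_t - F_t| \le Q_0 \, |P\tilde{\mathfrak n} - \mathfrak n| + |\tilde f - f|.
\end{align*}
For the first term I would invoke Lemma \ref{lem P tilde n-n le Ptilde F_* - F_*}, which gives $|P\tilde{\mathfrak n} - \mathfrak n| \le C |P\widetilde F_* - F_*|$. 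For the second term, $|\tilde f - f| = |Q(\tilde h_{ij}, \tilde g_{ij}) - Q(h_{ij}, g_{ij})|$ is precisely controlled by the $k=0$ case of Lemma \ref{lem eqn estimate of difference of partial^k Q(A,g)} (applied in the local chart guaranteed by that lemma), yielding $|\tilde f - f| \le C^Q_0(|\widetilde A - A| + |P\widetilde F_* - F_*|)$. Assembling these bounds gives $|P\widetilde F_t - F_t| \le C_1(|P\widetilde F_* - F_*| + |\widetilde A - A|)$, and combining with the $d^2$ term from the first part of the crude bound produces \eqref{eqn estimates of D_t v}.

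One subtlety to be careful about: Lemma \ref{lem eqn estimate of difference of partial^k Q(A,g)} produces the estimate only after passing to a specific local chart (the one adapted to $x$ in $\mathscr A$), whereas $|D_t v|$, $|P\widetilde F_* - F_*|$, $|\widetilde A - A|$ are all coordinate-independent quantities (the last two being measured in $g$, by the convention fixed in the previous section). So I would note that it suffices to verify \eqref{eqn estimates of D_t v} pointwise at each $x \in M$, choose the chart from Lemma \ref{lem eqn estimate of difference of partial^k Q(A,g)} at that point, run the above estimates there, and observe that since both sides are independent of the chart the inequality holds in general. There is no real obstacle here — the only thing requiring a moment's thought is making sure the $d^2$ term survives (rather than being absorbed): it must be kept because near $t=T$ we will want $d^2$ small compared to $d$, but $|P\widetilde F_* - F_*|$ and $|\widetilde A - A|$ are the quantities that will appear in the final ODE-PDE system, so keeping $d^2$ separate is the natural normalization. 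The main (mild) obstacle is simply bookkeeping the constants' dependencies so that $C_1$ depends only on the admissible parameters $n, q, L, \delta, B_0$ — this follows because $Q_0, C, C^Q_0$ all have the correct dependence and $C_0$ from Lemma \ref{lem basic inequality from Lee Ma} depends only on $n, q, L, \delta$ and $B_0$ via \eqref{eqn d < min given bar epsilon small}.
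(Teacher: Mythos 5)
Your proposal is correct and follows essentially the same route as the paper's proof: the crude bound $|D_t v| \le C_0 d^2 |F_t| + 2|P\widetilde F_t - F_t|$ from Lemma \ref{lem basic inequality from Lee Ma}, the decomposition $P\widetilde F_t - F_t = -\tilde f(P\tilde{\mathfrak n} - \mathfrak n) - (\tilde f - f)\mathfrak n$, Lemma \ref{lem P tilde n-n le Ptilde F_* - F_*} for the normal vectors, and the $k=0$ case of Lemma \ref{lem eqn estimate of difference of partial^k Q(A,g)} for $|\tilde f - f|$. Your added remarks on coordinate-independence and constant dependencies are fine and merely make explicit what the paper leaves implicit.
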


\begin{proof}
By (2) in Lemma \ref{lem basic inequality from Lee Ma},  
$$ |D_t v| \le  C_0 |F_t| d^2 +  |P \widetilde F_t - F_t|.$$
Since $\widetilde F_t = -\tilde f \tilde{\mathfrak  n}$, $F_t = -f \mathfrak n$, we have 
\begin{align*}
P \widetilde F_t - F_t &= -\tilde f P\tilde{\mathfrak  n} + f\mathfrak n \\
&=  -\tilde f (P\tilde{\mathfrak n} - \mathfrak n) - (\tilde f - f) \mathfrak n. 
\end{align*}
By (\ref{eqn estimate of difference of partial^k Q(A,g)}), Lemma \ref{lem basic inequality from Lee Ma} and the above inequality, we obtain (\ref{eqn estimates of D_t v}).
\end{proof}

Next, we estimate the first order quantities. By (2), (3) in Lemma \ref{lem basic inequality from Lee Ma} and Lemma \ref{lem P tilde n-n le Ptilde F_* - F_*}, the terms $\tilde g-g$, $\tilde g^{-1}-g^{-1}$ and $P\tilde{\mathfrak n} - \mathfrak n$ are bounded by $P\widetilde F_* - F_*$. For the latter quantity, we have the following estimates. 

\begin{prop} \label{prop estimates the time derivative of P tilde F_* - F_*} The time derivative of $P\widetilde F_* - F_*$ satisfies the estimate
\begin{equation} \label{eqn estimates the time derivative of P tilde F_* - F_*} 
    |D_t (P\widetilde F_* - F_*)| \le C_1( |v| + |P\widetilde F_* - F_*| + |\widetilde \nabla \widetilde A - \nabla A| + |\widetilde A - A|)
\end{equation}
\end{prop}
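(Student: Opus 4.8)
The plan is to differentiate $P\widetilde F_i-F_i$ in $t$ for each coordinate direction $x^i$, expand using the product rule for the bundle map $P$ together with the flow equation, substitute the formulas from Lemma \ref{lem basic evolution of extrinsic flow}, and then sort the resulting terms into two groups: ``parallel transport defect'' terms controlled by $|P\widetilde F_*-F_*|$ (via Lemmas \ref{lem basic inequality from Lee Ma}, \ref{lem P tilde n-n le Ptilde F_* - F_*} and \ref{lem eqn estimate of difference of partial^k Q(A,g)}) and the genuine first--order term $|\widetilde\nabla\widetilde A-\nabla A|$. Throughout one works on $[T-\bar\epsilon,T]\subset[e,T]$, where $|A|$, $|\nabla A|$, $|f|$, $|\widetilde\nabla\widetilde A|$ and $|F_*|$ are all uniformly bounded by (\ref{eqn |A| le L}), condition (I), Proposition \ref{prop higher order curvature estimates on [e, T]} and Lemma \ref{lem basic inequality from Lee Ma}(1), and where $d=|v|$ is uniformly small; these bounds absorb every ``coefficient'' factor into the constant $C_1$.

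First I would write $D_t(P\widetilde F_i)=(D_tP)\widetilde F_i+P(\widetilde D_t\widetilde F_i)$, and, using that the ambient connection is torsion--free, $D_tF_i=D_iF_t=-\nabla_if\,\mathfrak n-fg^{jk}h_{ij}F_k$ by (4) of Lemma \ref{lem basic evolution of extrinsic flow} (and the analogous identity for $\widetilde F$). The term $(D_tP)\widetilde F_i$ is estimated by (5) of Lemma \ref{lem basic inequality from Lee Ma} together with the boundedness of $|F_t|=|f|$ and $|\widetilde F_i|$ and the bound $|P\widetilde F_t-F_t|\le C(|P\widetilde F_*-F_*|+|\widetilde A-A|)$ that is already extracted in the proof of Lemma \ref{lem estimates of D_tv}; since $d=|v|$ is bounded, this contributes $C_1(|v|+|P\widetilde F_*-F_*|+|\widetilde A-A|)$.

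Next I would analyze $P(\widetilde D_t\widetilde F_i)-D_tF_i$. Inserting $P\tilde{\mathfrak n}=(P\tilde{\mathfrak n}-\mathfrak n)+\mathfrak n$ and $P\widetilde F_k=(P\widetilde F_k-F_k)+F_k$, this splits into a normal part $-\widetilde\nabla_i\tilde f\,(P\tilde{\mathfrak n}-\mathfrak n)-(\widetilde\nabla_i\tilde f-\nabla_if)\mathfrak n$ and a tangential part $-\tilde f\tilde g^{jk}\tilde h_{ij}(P\widetilde F_k-F_k)+(fg^{jk}h_{ij}-\tilde f\tilde g^{jk}\tilde h_{ij})F_k$. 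Using $\widetilde\nabla_i\tilde f=\partial_hQ(\tilde A,\tilde g)^{kl}\widetilde\nabla_i\tilde h_{kl}$ and $\nabla_if=\partial_hQ(A,g)^{kl}\nabla_ih_{kl}$ (since $\nabla g=\widetilde\nabla\tilde g=0$), the key identity is
\begin{equation*}
\widetilde\nabla_i\tilde f-\nabla_if=\partial_hQ(\tilde A,\tilde g)^{kl}(\widetilde\nabla_i\tilde h_{kl}-\nabla_ih_{kl})+\big(\partial_hQ(\tilde A,\tilde g)^{kl}-\partial_hQ(A,g)^{kl}\big)\nabla_ih_{kl},
\end{equation*}
whose first summand is controlled by $C|\widetilde\nabla\widetilde A-\nabla A|$ and whose second summand is controlled by $C(|\widetilde A-A|+|P\widetilde F_*-F_*|)$ using (\ref{eqn estimate of difference of partial^k Q(A,g)}). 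The remaining differences $P\tilde{\mathfrak n}-\mathfrak n$, $\tilde g^{-1}-g^{-1}$, $\tilde f-f$, $\tilde h_{ij}-h_{ij}$ and $P\widetilde F_k-F_k$ are each dominated by $|P\widetilde F_*-F_*|+|\widetilde A-A|$ via (2), (3) of Lemma \ref{lem basic inequality from Lee Ma}, Lemma \ref{lem P tilde n-n le Ptilde F_* - F_*}, and the $k=0$ case of Lemma \ref{lem eqn estimate of difference of partial^k Q(A,g)}; a telescoping estimate on the product $fg^{jk}h_{ij}-\tilde f\tilde g^{jk}\tilde h_{ij}$ finishes the tangential part. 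Collecting the bounds and summing over $i$ yields (\ref{eqn estimates the time derivative of P tilde F_* - F_*}).

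The hard part is not analytic but organizational: one must keep careful track of which connection acts on each object — $\nabla$ on $(M,g)$, $\widetilde\nabla$ on $(M,\tilde g)$, and the pullback connections $D_t$ on $N$ and $\widetilde N$ — and in particular be careful that $\widetilde\nabla\widetilde A-\nabla A$ is a genuine difference of $(0,3)$--tensors on the fixed domain $M$, so that measuring it with $g$ is legitimate, rather than attempting to compare it through $P$. Once the bookkeeping is in place, every term falls into one of the two buckets above and the estimate follows from the triangle inequality and the uniform bounds.
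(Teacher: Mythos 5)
Your proposal is correct and follows essentially the same route as the paper: the same decomposition $D_t(P\widetilde F_i-F_i)=(D_tP)\widetilde F_i+P D_t\widetilde F_i-D_tF_i$ with $D_tF_i=D_iF_t=-\nabla_i(f\mathfrak n)$, the same estimate of $(D_tP)\widetilde F_i$ via Lemma \ref{lem basic inequality from Lee Ma}(5), the same key identity splitting $\widetilde\nabla_i\tilde f-\nabla_if$ and applying Lemma \ref{lem eqn estimate of difference of partial^k Q(A,g)} together with Lemma \ref{lem P tilde n-n le Ptilde F_* - F_*}. The only cosmetic difference is that you telescope the tangential product $fg^{jk}h_{ij}-\tilde f\tilde g^{jk}\tilde h_{ij}$ directly, whereas the paper packages that computation as the separate Lemma \ref{lem estimates of times derivative of Ptilde nabla n - nabla n}; the content is identical.
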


We need the following Lemma in proving the above Proposition.
\begin{lem} \label{lem estimates of times derivative of Ptilde nabla n - nabla n}
The quantity $P\widetilde \nabla \tilde{\mathfrak n} - \nabla \mathfrak n$ satisfies the estimate
    \begin{equation} \label{eqn estimates of times derivative of Ptilde nabla n - nabla n}
    |P\widetilde \nabla \tilde{\mathfrak n} - \nabla \mathfrak n|\le C_0 (|P\widetilde F_* - F_*|+|\widetilde A-A|). 
    \end{equation}
\end{lem}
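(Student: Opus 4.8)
The plan is to express $\widetilde\nabla\tilde{\mathfrak n}$ and $\nabla\mathfrak n$ via item (4) of Lemma \ref{lem basic evolution of extrinsic flow}, which gives $\nabla_i\mathfrak n = g^{jk}h_{ij}F_k$ and $\widetilde\nabla_i\tilde{\mathfrak n} = \tilde g^{jk}\tilde h_{ij}\widetilde F_k$, and then compare the two after applying the parallel transport $P$. Concretely, I would write
\begin{align*}
P\widetilde\nabla_i\tilde{\mathfrak n} - \nabla_i\mathfrak n &= \tilde g^{jk}\tilde h_{ij}\,P\widetilde F_k - g^{jk}h_{ij}\,F_k \\
&= \tilde g^{jk}\tilde h_{ij}(P\widetilde F_k - F_k) + (\tilde g^{jk}\tilde h_{ij} - g^{jk}h_{ij})F_k,
\end{align*}
and then split the second term further as $(\tilde g^{jk}-g^{jk})\tilde h_{ij}F_k + g^{jk}(\tilde h_{ij}-h_{ij})F_k$. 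The point is that every factor other than the "difference" factors is uniformly bounded: $\tilde g^{-1}, g^{-1}$ are controlled by \eqref{eqn g equivalent to delta_0}, $\tilde h_{ij}, h_{ij}$ by \eqref{eqn |A| le L}, and $|F_*|, |\widetilde F_*|$ by item (1) of Lemma \ref{lem basic inequality from Lee Ma}.

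The three difference factors are then bounded as follows: $|P\widetilde F_* - F_*|$ is left as is; $|\tilde g^{-1}-g^{-1}|$ is bounded by $C|\tilde g - g|$ by item (2) of Lemma \ref{lem basic inequality from Lee Ma}, and $|\tilde g - g|$ by $C|P\widetilde F_* - F_*|$ by item (3); and $|\tilde h_{ij}-h_{ij}|$ is, up to the metric equivalence \eqref{eqn g equivalent to delta_0}, comparable to $|\widetilde A - A|$ in the sense fixed by the norm conventions of Section 4. Collecting terms yields the desired bound $|P\widetilde\nabla\tilde{\mathfrak n} - \nabla\mathfrak n| \le C_0(|P\widetilde F_* - F_*| + |\widetilde A - A|)$, with $C_0$ depending only on $n, q, L, \delta, e$ as in the stated convention.

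The main point to be careful about — rather than a genuine obstacle — is the bookkeeping of which base point the various objects live over and which connection/metric is used to measure norms. The quantity $P\widetilde\nabla_i\tilde{\mathfrak n}$ is a section of $T^*M\otimes N$ obtained by parallel-transporting a section of $T^*M\otimes\widetilde N$ along $-\gamma$, and the identity $\widetilde\nabla_i\tilde{\mathfrak n} = \tilde g^{jk}\tilde h_{ij}\widetilde F_k$ holds before transport, so one must apply $P$ to both sides and use that $P$ is a linear isometry commuting with the tensorial structure on the $M$-factor, as set up in Section 4. Since $P$ is an isometry, norms of transported sections equal norms of the originals, so no extra constants are introduced there. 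Once this is set up correctly, the estimate is a direct application of the triangle inequality together with the boundedness facts cited above.
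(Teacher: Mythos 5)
Your proposal is correct and follows essentially the same route as the paper: both start from item (4) of Lemma \ref{lem basic evolution of extrinsic flow}, apply $P$, insert the same add-and-subtract decomposition (the paper merely pairs $\tilde g^{-1}$ with $\tilde h - h$ and $\tilde g^{-1}-g^{-1}$ with $h$, a cosmetic difference from your splitting), and then invoke items (1)--(3) of Lemma \ref{lem basic inequality from Lee Ma} together with $|A|, |\widetilde A|\le L$ to conclude. Your extra remarks on bundle bookkeeping and the isometry property of $P$ are consistent with the paper's conventions and introduce no gap.
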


\begin{proof} Using (4) in Lemma \ref{lem basic evolution of extrinsic flow}, for each $i=1, \cdots, n$, 
\begin{align*}
    P\widetilde \nabla_i \tilde{\mathfrak n} - \nabla_i \mathfrak n &= \tilde g^{jk} \tilde h_{ij} P \widetilde F_k - g^{jk} h_{ij} F_k \\
    &= \tilde g^{jk} \tilde h_{ij} (P\widetilde F_k - F_k) + (\tilde g^{jk} (\tilde h_{ij} - h_{ij}) + (\tilde g^{jk}-g^{jk})h_{ij} )F_k 
\end{align*}
Hence 
\begin{equation*}
    |P\tilde \nabla_i \tilde{\mathfrak n} - \nabla_i \mathfrak n|\le C_0 |\widetilde A| |P\widetilde F_* - F_*| + C_0 |\widetilde A-A|  + C_0 |A| |\tilde g-g|. 
\end{equation*}
Together with Lemma \ref{lem basic inequality from Lee Ma}, the Lemma is proved. 
\end{proof}

\begin{proof}[Proof of Proposition \ref{prop estimates the time derivative of P tilde F_* - F_*}]: for each $i=1, \cdots, n$,
    \begin{align*}
        D_t (P\widetilde F_i - F_i) &= (D_t P)\widetilde F_i + PD_t\widetilde F_i - D_tF_i \\ &=(D_t P)\widetilde F_i - (P\widetilde \nabla_i(\tilde f \tilde{\mathfrak n}) - \nabla_i (f\mathfrak n)) \\ &= (D_t P)\widetilde F_i - (P\widetilde \nabla_i \tilde f \tilde{\mathfrak n} - \nabla_i f \mathfrak n) - (P\tilde f \widetilde \nabla_i \tilde{\mathfrak n} - f\nabla_i \mathfrak n). 
    \end{align*}
    We bound each term on the right one by one. First, by (5) in Lemma \ref{lem basic inequality from Lee Ma} and $d<1$, 
    $$ |(D_t P) \widetilde F_i| \le C_0 (d + |P\widetilde F_* - F_*|). $$
    Next, note that
    \begin{align*}
       P\widetilde \nabla_i \tilde f \tilde{\mathfrak n} - \nabla_i f \mathfrak n &= \widetilde \nabla_i \tilde f P\tilde{\mathfrak n} - \nabla_i f \mathfrak n \\ &= P\tilde{\mathfrak n}(\widetilde \nabla_i \tilde f - \nabla_i f) + \nabla_if(P\tilde{\mathfrak n} - \mathfrak n)
    \end{align*}
    and 
    \begin{align*}
        \widetilde \nabla_i \tilde f - \nabla_i f &= \widetilde \nabla_i Q(\widetilde A,\tilde g) - \nabla_iQ(A,g) \\&= \frac{\partial Q}{\partial h_{jk}}(\widetilde A,\tilde g) \widetilde \nabla_i \tilde h_{jk} - \frac{\partial Q}{\partial h_{jk}}(A,g) \nabla_ih_{jk} \\&=\left(\frac{\partial Q}{\partial h_{jk}}(\widetilde A,\tilde g) - \frac{\partial Q}{\partial h_{jk}}(A,g)\right)\widetilde \nabla_i \tilde h_{jk} + (\widetilde \nabla_i \tilde h_{jk} - \nabla_i h_{jk})\frac{\partial Q}{\partial h_{jk}}(A,g).
    \end{align*}
    Using (\ref{eqn estimate of difference of partial^k Q(A,g)}) with $k=1$ we have the estimate
    \begin{equation*}
        \left|\frac{\partial Q}{\partial h_{jk}}(\widetilde A,\tilde g) - \frac{\partial Q}{\partial h_{jk}}(A,g)\right| \le C_2(|P\widetilde F_* - F_*| + |\widetilde A - A|).
    \end{equation*}
    Together with Lemma \ref{lem P tilde n-n le Ptilde F_* - F_*} we have 
    $$ |P\widetilde \nabla_i \tilde f \tilde{\mathfrak n} - \nabla_i f \mathfrak n| \le C_2(|P\widetilde F_* - F_*| + |\widetilde A - A| + |\widetilde \nabla \widetilde A - \nabla A|). $$
    Lastly, 
    \begin{align*}
       P\tilde f \widetilde \nabla_i \tilde{\mathfrak n} - f\nabla_i \mathfrak n &= \tilde f P \widetilde \nabla_i \tilde{\mathfrak n} - f\nabla_i \mathfrak n \\&= \tilde f(P\widetilde \nabla_i \tilde{\mathfrak n} - \nabla_i \mathfrak n) + (\tilde f - f)\nabla_i \mathfrak n.
    \end{align*}
    Using (\ref{eqn estimate of difference of partial^k Q(A,g)}) with $k=1$ Lemma \ref{lem estimates of times derivative of Ptilde nabla n - nabla n},
    $$ |P\tilde f \widetilde \nabla_i \tilde{\mathfrak n} - f\nabla_i \mathfrak n| \le C_1 ( |P\widetilde F_* - F_*| + |\widetilde A-A|). $$
    Combining the above inequalities, we obtain (\ref{eqn estimates the time derivative of P tilde F_* - F_*}).
\end{proof}

We use $\Gamma = (\Gamma_{ij}^k)$, $\widetilde \Gamma = (\Gamma_{ij}^k)$ to denote the Christoffel symbols of $g, \tilde g$ respectively. Note that $\widetilde\Gamma - \Gamma$ is a $(1,2)$ tensor on $M$. In the next two Propositions, we estimate the time derivative of $\widetilde\Gamma-\Gamma$ and $\nabla (\widetilde\Gamma-\Gamma)$. 

\begin{prop}\label{eqn estimates the time derivative of tilde Gamma - Gamma}
    The time derivative of $\widetilde \Gamma - \Gamma$ satisfies the estimate
    \begin{equation*}
        |\partial_t (\widetilde \Gamma - \Gamma)| \le C_1(|P\widetilde F_* - F_*| + |\widetilde \nabla \widetilde A - \nabla A| + |\widetilde A - A|).
    \end{equation*}
\end{prop}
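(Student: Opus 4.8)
The plan is to compute $\partial_t(\widetilde\Gamma-\Gamma)$ directly from part (2) of Lemma \ref{lem basic evolution of extrinsic flow}, which gives
$$
\partial_t \Gamma^k_{ij} = g^{kl}\bigl[\nabla_i(fh_{jl}) + \nabla_j(fh_{il}) - \nabla_l(fh_{ij})\bigr],
$$
and the analogous formula for $\partial_t\widetilde\Gamma$. Expanding $\nabla_i(fh_{jl}) = (\nabla_i f) h_{jl} + f\nabla_i h_{jl}$ and recalling $f = Q(A,g)$, $\nabla_i f = \tfrac{\partial Q}{\partial h_{pq}}(A,g)\nabla_i h_{pq}$, one sees that $\partial_t\Gamma$ is a universal expression (built from contractions) in the data
$$
\bigl(g^{-1},\ A,\ \nabla A,\ Q(A,g),\ \partial_h Q(A,g)\bigr),
$$
and $\partial_t\widetilde\Gamma$ is the same expression in $(\tilde g^{-1},\widetilde A,\widetilde\nabla\widetilde A,Q(\widetilde A,\tilde g),\partial_h Q(\widetilde A,\tilde g))$. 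Therefore $\partial_t(\widetilde\Gamma-\Gamma)$ is a sum of terms, each of which is a difference of a product where exactly one factor has been replaced by its tilded version; by a standard telescoping (add and subtract intermediate products) each such term is controlled by the difference of one pair of factors times bounds on the remaining factors.

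The next step is to bound each factor and each factor-difference. The bounds on the undifferentiated factors are uniform: $|g^{-1}|,|\tilde g^{-1}|\le C$ and $|A|,|\widetilde A|\le L$ by hypothesis (\ref{eqn |A| le L}); $|Q(A,g)|,|\partial_h Q(A,g)|$ and their tilded versions are bounded by compactness of $K_{\bar\delta}$ (using (\ref{eqn (h, g), (tilde h, tilde g) lies in the same balls}) and (\ref{eqn partial^l_h Q uniformly bounded by Q_k})); and $|\nabla A|,|\widetilde\nabla\widetilde A|\le L_1^e$ by Proposition \ref{prop higher order curvature estimates on [e, T]} on the interval $[T-\bar\epsilon,T]\subset[e,T]$. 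For the factor-differences, we use: $|\tilde g^{-1}-g^{-1}|\le C|\tilde g-g|\le C|P\widetilde F_* - F_*|$ by parts (2),(3) of Lemma \ref{lem basic inequality from Lee Ma}; $|\widetilde A-A|$ appears directly; $|Q(\widetilde A,\tilde g)-Q(A,g)|$ and $|\partial_h Q(\widetilde A,\tilde g)-\partial_h Q(A,g)|$ are $\le C^Q_k(|\widetilde A-A| + |P\widetilde F_* - F_*|)$ by Lemma \ref{lem eqn estimate of difference of partial^k Q(A,g)}; and the remaining difference $|\widetilde\nabla\widetilde A-\nabla A|$ appears directly in the claimed estimate. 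Assembling, every term is bounded by $C_1(|P\widetilde F_* - F_*| + |\widetilde\nabla\widetilde A-\nabla A| + |\widetilde A-A|)$, which is exactly the assertion.

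One subtlety to address carefully: the quantity $\widetilde\nabla\widetilde A$ is the covariant derivative of $\widetilde A$ with respect to $\tilde g$, while $\nabla A$ uses $g$, so in the expansion of $\partial_t\widetilde\Gamma$ the ``$\nabla A$'' slot is literally $\widetilde\nabla\widetilde A$ and in $\partial_t\Gamma$ it is $\nabla A$; their difference is measured in the $g$-norm as per the convention fixed before Lemma \ref{lem basic inequality from Lee Ma}, so no conversion between connections is needed beyond what Lemma \ref{lem basic inequality from Lee Ma} already supplies. A second point is that all these identities and bounds must be read in the distinguished local chart around a given $x$ furnished by the lemma preceding Lemma \ref{lem eqn estimate of difference of partial^k Q(A,g)}, so that $(h_{ij},g_{ij})$ and $(\tilde h_{ij},\tilde g_{ij})$ lie in $K_{\bar\delta}$ and the mean-value argument applies; since $|\partial_t(\widetilde\Gamma-\Gamma)|$ is chart-independent as a tensor norm, this is harmless. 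I expect the main (though purely bookkeeping) obstacle to be organizing the telescoping expansion cleanly — there are several products of up to four factors — rather than any conceptual difficulty; the estimate is a formal consequence of the evolution formula together with the already-established comparison lemmas.
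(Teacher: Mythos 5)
Your argument is correct and is exactly the route the paper takes: its proof of this proposition is a one-line citation of (2) in Lemma \ref{lem basic evolution of extrinsic flow}, Lemma \ref{lem basic inequality from Lee Ma} and the estimate (\ref{eqn estimate of difference of partial^k Q(A,g)}), and your telescoping expansion (together with the bound on $|\nabla A|,|\widetilde\nabla\widetilde A|$ from Proposition \ref{prop higher order curvature estimates on [e, T]} and the chart/norm conventions you note) is precisely the bookkeeping the authors leave implicit.
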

\begin{proof}
    This is an immediate consequence of (2) in Lemma \ref{lem basic evolution of extrinsic flow}, Lemma \ref{lem basic inequality from Lee Ma} and (\ref{eqn estimate of difference of partial^k Q(A,g)}).
\end{proof}

Next we estimate the time derivative of the tensor $\nabla(\widetilde \Gamma - \Gamma)$. 
First by (\ref{eqn commutator of nabla and partial_t}),
\begin{equation} \label{eqn interchanging partial_t and nabla}
    \partial_t \nabla(\widetilde \Gamma - \Gamma) = \nabla \partial_t(\widetilde \Gamma - \Gamma) + (\partial_t \Gamma) * (\widetilde \Gamma - \Gamma).
\end{equation}
Denote 
\begin{align*}
    \widetilde K = \widetilde \nabla_i(\tilde f \tilde h_{jl}) + \widetilde \nabla_j(\tilde f \tilde h_{il}) - \widetilde \nabla_l(\tilde f \tilde h_{ij}),\\
    K = \nabla_i(fh_{jl}) + \nabla_j(fh_{il}) - \nabla_l(fh_{ij}).
\end{align*}
Then 
    \begin{align*}
    \partial_t(\widetilde \Gamma^k_{ij} - \Gamma^k_{ij}) &= \widetilde g^{kl}\widetilde K - g^{kl}K \\&= (\widetilde g^{kl} - g^{kl})\widetilde K - g^{kl}(\widetilde K - K). 
    \end{align*}
Together with (\ref{eqn interchanging partial_t and nabla}), 
    \begin{align*}
    \partial_t \nabla(\widetilde \Gamma - \Gamma) &= \nabla [(\widetilde g^{-1} - g^{-1}) * K_0 + (\tilde f - f) * K_1 + (\widetilde \nabla \tilde f - \nabla f) * K_2 \\
    &\ \ \ +(\widetilde A -A) * K_3 + (\widetilde \nabla \widetilde A - \nabla A) * K_4] + (\partial_t \Gamma) * (\widetilde \Gamma - \Gamma),
\end{align*}
where $K_i (i = 0, 1, 2, 3, 4)$ are smooth tensors. To deal with each terms in the above, We use
\begin{align*}
    \nabla (\tilde g^{-1} - g^{-1}) &= \nabla \tilde g^{-1} = (\widetilde \nabla - \nabla)*g^{-1} = (\widetilde \Gamma - \Gamma)*g^{-1},\\ \nabla(\tilde f - f) &= (\widetilde \nabla \tilde f - \nabla f) - (\widetilde \nabla - \nabla)\tilde f,\\ \nabla(\widetilde \nabla \tilde f -\nabla f) &= (\widetilde \nabla^2 \tilde f - \nabla^2 f) - (\widetilde \nabla - \nabla)\widetilde \nabla\tilde f
\end{align*}
and
\begin{align*}
    \widetilde \nabla_i\widetilde\nabla_j\tilde f - \nabla_i\nabla_jf &= \widetilde \nabla_i\widetilde\nabla_j\widetilde Q(\tilde h,\tilde g) - \nabla_i\nabla_jQ(h,g) \\
    &=\widetilde\nabla_i \left(\frac{\partial Q}{\partial h_{kl}}(\tilde h,\tilde g) \widetilde \nabla_j\tilde h_{kl}\right) - \nabla_i \left(\frac{\partial Q}{\partial h_{kl}}(h,g) \nabla_jh_{kl} \right) \\
    &=\left(\frac{\partial^2 Q}{\partial h_{kl}\partial h_{rs}}(\tilde h,\tilde g)\widetilde\nabla_j\tilde h_{kl}\widetilde\nabla_i\tilde h_{rs} - \frac{\partial^2 Q}{\partial h_{kl}\partial h_{rs}}(h,g)\nabla_j h_{kl}\nabla_i h_{rs}\right) \\
    &\ \ \ + \left(\frac{\partial Q}{\partial h_{kl}}(\tilde h,\tilde g) \widetilde \nabla_i \widetilde \nabla_j\tilde h_{kl} - \frac{\partial Q}{\partial h_{kl}}(h,g) \nabla_i \nabla_j h_{kl}\right).
\end{align*}
For the last term on the right hand side of the last equality, we have
\begin{align*}
    \frac{\partial Q}{\partial h_{kl}}(\tilde h,\tilde g) \widetilde \nabla_i \widetilde \nabla_j\tilde h_{kl} &- \frac{\partial Q}{\partial h_{kl}}(h,g) \nabla_i \nabla_j h_{kl} \\
    &= \left( \frac{\partial Q}{\partial h_{kl}}(\tilde h,\tilde g) - \frac{\partial Q}{\partial h_{kl}}(h,g) \right)\widetilde \nabla_i \widetilde \nabla_j\tilde h_{kl} \\
    &\ \ \ + \frac{\partial Q}{\partial h_{kl}}(h,g)((\widetilde \nabla_i - \nabla_i)\widetilde \nabla_j \tilde h_{kl} + \nabla_i(\widetilde \nabla_j\tilde h_{kl} - \nabla_jh_{kl})),
\end{align*}
the same trick using in the first term, together with (\ref{eqn estimate of difference of partial^k Q(A,g)}), Lemma \ref{lem basic evolution of extrinsic flow} and  Lemma \ref{lem basic inequality from Lee Ma}, one has 
\begin{prop} \label{eqn estimates the time derivative of nabla tilde Gamma - Gamma}
The time derivative of $\nabla(\widetilde \Gamma - \Gamma)$ satisfies the estimate
\begin{align*}
    |\partial_t \nabla(\widetilde \Gamma - \Gamma)| &\le C_3(|P\widetilde F_* - F_*| + |\widetilde \Gamma - \Gamma| + |\widetilde A - A| + |\nabla(\widetilde A - A)| \\ &+ |\widetilde \nabla \widetilde A - \nabla A| + |\nabla (\widetilde \nabla \widetilde A - \nabla A)|).
\end{align*}
\end{prop}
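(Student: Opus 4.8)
The plan is to chase through the chain of identities already assembled before the statement and reduce the estimate to a purely algebraic bookkeeping. The three external inputs are: the interior curvature bounds of Proposition~\ref{prop higher order curvature estimates on [e, T]}, applied with $k=2$ on $M\times[e,T]$ (which contains $[T-\bar\epsilon,T]$), giving $|\nabla^j A|+|\widetilde\nabla^j\widetilde A|\le L^e_j$ for $j=0,1,2$; the difference estimate~\eqref{eqn estimate of difference of partial^k Q(A,g)} for $\partial^k_h Q(\widetilde A,\tilde g)-\partial^k_h Q(A,g)$; and Lemma~\ref{lem basic inequality from Lee Ma} together with Lemma~\ref{lem P tilde n-n le Ptilde F_* - F_*}, which bound $\tilde g-g$, $\tilde g^{-1}-g^{-1}$ and $P\tilde{\mathfrak n}-\mathfrak n$ in terms of $|P\widetilde F_*-F_*|$. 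With these in hand, every tensor obtained by contractions from $g^{-1}$, $\tilde g^{-1}$, $f$, $\tilde f$, $\nabla f$, $\widetilde\nabla\tilde f$, the $\partial^l_h Q$ with $l\le 2$, $A$, $\widetilde A$, $\nabla A$, $\widetilde\nabla\widetilde A$, $\nabla^2 A$, $\widetilde\nabla^2\widetilde A$, and the pulled-back ambient curvature terms is bounded by a constant of the asserted type; call such a tensor \emph{bounded}.

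First I would split $\partial_t\nabla(\widetilde\Gamma-\Gamma)$ via~\eqref{eqn interchanging partial_t and nabla} as $\nabla\partial_t(\widetilde\Gamma-\Gamma)+(\partial_t\Gamma)*(\widetilde\Gamma-\Gamma)$; since $\partial_t\Gamma$ is bounded by (2) of Lemma~\ref{lem basic evolution of extrinsic flow}, the second piece is $\le C|\widetilde\Gamma-\Gamma|$. For the first piece I would feed in the decomposition $\partial_t(\widetilde\Gamma-\Gamma)=(\widetilde g^{-1}-g^{-1})*K_0+(\tilde f-f)*K_1+(\widetilde\nabla\tilde f-\nabla f)*K_2+(\widetilde A-A)*K_3+(\widetilde\nabla\widetilde A-\nabla A)*K_4$ with the $K_i$ bounded (bounded also after one $\nabla$, by~\eqref{eqn nabla of ambient tensors}), and apply the Leibniz rule. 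This reduces matters to bounding $\nabla$ of each of the five differences: $\nabla(\widetilde g^{-1}-g^{-1})=(\widetilde\Gamma-\Gamma)*g^{-1}$ gives $C|\widetilde\Gamma-\Gamma|$; $\nabla(\tilde f-f)=\widetilde\nabla\tilde f-\nabla f$ is already controlled inside the proof of Proposition~\ref{prop estimates the time derivative of P tilde F_* - F_*}; $\nabla(\widetilde A-A)$ and $\nabla(\widetilde\nabla\widetilde A-\nabla A)$ appear verbatim on the right-hand side of the claim; and $\nabla(\widetilde\nabla\tilde f-\nabla f)$ is left for the next step (the undifferentiated factors $\widetilde g^{-1}-g^{-1}$, $\tilde f-f$, $\widetilde\nabla\tilde f-\nabla f$, $\widetilde A-A$, $\widetilde\nabla\widetilde A-\nabla A$ produced by Leibniz are absorbed into the right-hand side via the cited lemmas and Proposition~\ref{prop estimates the time derivative of P tilde F_* - F_*}).

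The heart of the argument is $\nabla(\widetilde\nabla\tilde f-\nabla f)$, where second derivatives of the second fundamental form enter. I would write $\nabla(\widetilde\nabla\tilde f-\nabla f)=(\widetilde\nabla^2\tilde f-\nabla^2 f)-(\widetilde\nabla-\nabla)\widetilde\nabla\tilde f$; the last term equals $(\widetilde\Gamma-\Gamma)*\widetilde\nabla\tilde f$ with $\widetilde\nabla\tilde f=\partial_h Q(\widetilde A,\tilde g)*\widetilde\nabla\widetilde A$ bounded, hence $\le C|\widetilde\Gamma-\Gamma|$. For $\widetilde\nabla^2\tilde f-\nabla^2 f$ I would use the displayed Hessian identity, which writes it as a $\partial^2_h Q$-term quadratic in the first derivatives $\widetilde\nabla\widetilde A$ (resp. $\nabla A$) plus a $\partial_h Q$-term linear in the second derivatives $\widetilde\nabla^2\widetilde A$ (resp. $\nabla^2 A$); on each I apply the add-and-subtract splitting into (coefficient difference)$*$(bounded)$+$(bounded)$*$(term difference). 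The coefficient differences $\partial^2_h Q(\widetilde A,\tilde g)-\partial^2_h Q(A,g)$ and $\partial_h Q(\widetilde A,\tilde g)-\partial_h Q(A,g)$ are $\le C(|P\widetilde F_*-F_*|+|\widetilde A-A|)$ by~\eqref{eqn estimate of difference of partial^k Q(A,g)}; the quadratic term difference is $\widetilde\nabla\widetilde A*\widetilde\nabla\widetilde A-\nabla A*\nabla A=(\widetilde\nabla\widetilde A-\nabla A)*\widetilde\nabla\widetilde A+\nabla A*(\widetilde\nabla\widetilde A-\nabla A)$; and the second-derivative term difference is $\widetilde\nabla^2\widetilde A-\nabla^2 A=(\widetilde\Gamma-\Gamma)*\widetilde\nabla\widetilde A+\nabla(\widetilde\nabla\widetilde A-\nabla A)$. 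Collecting everything, each summand of $\partial_t\nabla(\widetilde\Gamma-\Gamma)$ is a bounded tensor times one of $|P\widetilde F_*-F_*|$, $|\widetilde\Gamma-\Gamma|$, $|\widetilde A-A|$, $|\nabla(\widetilde A-A)|$, $|\widetilde\nabla\widetilde A-\nabla A|$, $|\nabla(\widetilde\nabla\widetilde A-\nabla A)|$, and choosing $C_3$ to be the largest of the constants gives the estimate.

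The step I expect to be the genuine (if only technical) obstacle is this last one: controlling the Hessian difference $\widetilde\nabla^2\tilde f-\nabla^2 f$ without leaving any uncontrolled higher-order term. A priori the piece $\widetilde\nabla^2\widetilde A-\nabla^2 A$ contains the full second covariant derivative of the second fundamental form, which is not one of the admitted quantities; it is only after the add-and-subtract splitting — which transfers the troublesome derivative onto $\widetilde\Gamma-\Gamma$ and onto the already-admitted difference $\nabla(\widetilde\nabla\widetilde A-\nabla A)$ — and after invoking the interior bounds of Proposition~\ref{prop higher order curvature estimates on [e, T]} for $\nabla^j A$ and $\widetilde\nabla^j\widetilde A$ with $j\le 2$, that the right-hand side is seen to close up with exactly the six listed quantities and no loss of derivatives.
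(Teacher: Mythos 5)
Your argument is correct and follows the paper's own proof essentially step for step: the same commutator splitting of $\partial_t\nabla(\widetilde\Gamma-\Gamma)$, the same five-term decomposition of $\partial_t(\widetilde\Gamma-\Gamma)$ with Leibniz, and the same treatment of the Hessian difference $\widetilde\nabla^2\tilde f-\nabla^2 f$ by add-and-subtract together with (\ref{eqn estimate of difference of partial^k Q(A,g)}), Lemma \ref{lem basic inequality from Lee Ma} and the second-order curvature bounds from Proposition \ref{prop higher order curvature estimates on [e, T]}. No changes needed.
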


\section{The PDE-ODE inequality and the proof of the main Theorems}
In this section, we apply the general backward uniqueness theorem in \cite{KotschwarBU} to prove Theorem \ref{thm backward uniqueness of extrinsic geometric flow} and Theorem \ref{thm backward uniqueness of convex extrinsic geometric flow with concave speed}. As in \cite{LeeMa}, let $\mathcal X, \mathcal Y$ be the smooth vector bundles on $M$ given by 
\begin{equation} \label{eqn dfn of vb X and Y}
    \mathcal X = T^{0,2}M \oplus T^{0.3} M, \ \ \mathcal Y = N \oplus  ( T^{0,1}M \otimes N) \oplus T^{1,2}M \oplus T^{1,3}M. 
\end{equation}
Note that $\mathcal X$ is time-independent, while $\mathcal Y$ is time-dependent. On $\mathcal X$ one can take the time derivative $\partial_t ^{\mathcal X} := \partial _t \oplus \partial _t$, while on $Y$ one defines 
\begin{equation*}
    D_t ^{\mathcal Y} := D_t \oplus D_t \oplus \partial_t \oplus \partial_t. 
\end{equation*}

Recall that we have two solutions $F , \widetilde F : M \times [0, T]\to \overline M$ to the extrinsic flow (\ref{eqn extrinsic flow}) such that $F(\cdot, T) = \widetilde F(\cdot, T)$. From the discussion in section 3, there is $\bar \epsilon>0$ such that (\ref{eqn d < min given bar epsilon small}) is satisfied on $[T-\bar\epsilon, T]$, and hence the parallel transport from $\widetilde F$ to $F$ is well-defined. We remark that from (1) of Lemma \ref{lem basic evolution of extrinsic flow} and (\ref{eqn evolution of A under extrinsic flow}), $\bar\epsilon$ can be chosen independent of $T$. 

Let $X, Y$ be the sections of $\mathcal X, \mathcal Y$ respectively given by 
\begin{equation*}
    X = (\widetilde A - A) \oplus (\widetilde \nabla \widetilde A - \nabla A), \ \ \ Y = v \oplus (P\widetilde F_* - F_* )\oplus (\widetilde \Gamma - \Gamma) \oplus \nabla (\widetilde \Gamma - \Gamma). 
\end{equation*}
Note that $X$ contains terms that satisfy parabolic equations, while terms in $Y$ does not. 

By Lemma \ref{lem estimates of D_tv}, Proposition \ref{prop estimates the time derivative of P tilde F_* - F_*}, Proposition \ref{eqn estimates the time derivative of tilde Gamma - Gamma} and Proposition \ref{eqn estimates the time derivative of nabla tilde Gamma - Gamma}, we obtain the following Proposition. 

\begin{prop} \label{prop ODE ineq for Y}
    There exists a constant $C=C_2$ such that the section $X$ satisfies 
    \begin{equation} \label{eqn ODE ineq for Y}
    |D_t^{\mathcal Y} Y| \le C_2 (|X| + |\nabla X| + |Y|). 
    \end{equation}
\end{prop}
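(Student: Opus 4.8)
The plan is to obtain \eqref{eqn ODE ineq for Y} by assembling the four time-derivative estimates of Section 5 and re-expressing everything in terms of the composite sections $X$ and $Y$. First I would expand $D_t^{\mathcal Y}Y$ according to the block structure of $\mathcal Y$ in \eqref{eqn dfn of vb X and Y}: since $D_t^{\mathcal Y} = D_t\oplus D_t\oplus\partial_t\oplus\partial_t$ and $Y = v\oplus(P\widetilde F_* - F_*)\oplus(\widetilde\Gamma-\Gamma)\oplus\nabla(\widetilde\Gamma-\Gamma)$,
\[
|D_t^{\mathcal Y}Y| \le |D_t v| + |D_t(P\widetilde F_* - F_*)| + |\partial_t(\widetilde\Gamma-\Gamma)| + |\partial_t\nabla(\widetilde\Gamma-\Gamma)|,
\]
so it suffices to bound each of the four summands by $C(|X|+|\nabla X|+|Y|)$.

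For the four summands I would invoke, respectively, Lemma \ref{lem estimates of D_tv}, Proposition \ref{prop estimates the time derivative of P tilde F_* - F_*}, Proposition \ref{eqn estimates the time derivative of tilde Gamma - Gamma} and Proposition \ref{eqn estimates the time derivative of nabla tilde Gamma - Gamma}. The right-hand side of each of these is a linear combination, with constants depending only on the permitted data, of the quantities $d^2$, $|v|$, $|P\widetilde F_* - F_*|$, $|\widetilde\Gamma-\Gamma|$, $|\widetilde A-A|$, $|\nabla(\widetilde A-A)|$, $|\widetilde\nabla\widetilde A-\nabla A|$ and $|\nabla(\widetilde\nabla\widetilde A-\nabla A)|$. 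By the definitions of $X$ and $Y$, each of $|\widetilde A-A|$ and $|\widetilde\nabla\widetilde A-\nabla A|$ is $\le|X|$; each of $|\nabla(\widetilde A-A)|$ and $|\nabla(\widetilde\nabla\widetilde A-\nabla A)|$ is $\le|\nabla X|$; and each of $|v|$, $|P\widetilde F_*-F_*|$, $|\widetilde\Gamma-\Gamma|$ and $|\nabla(\widetilde\Gamma-\Gamma)|$ is $\le|Y|$. Substituting these and taking $C_2$ to be a fixed multiple of the largest of the constants from the four cited results gives \eqref{eqn ODE ineq for Y}.

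The one point needing a remark is the term $d^2$ in the bound for $|D_t v|$: since $v$ is the initial velocity of the minimizing geodesic from $F(x,t)$ to $\widetilde F(x,t)$ one has $|v| = d(x,t) = d_{\overline M}(F(x,t),\widetilde F(x,t))$, and $d<1$ on $[T-\bar\epsilon,T]$ by \eqref{eqn d < min given bar epsilon small}, so $d^2\le d = |v|\le |Y|$ and the quadratic term is absorbed. There is no real obstacle here — the content of the proposition is entirely contained in the Section 5 estimates and the step is bookkeeping; the only non-formal ingredient is the identification $|v| = d$ together with the smallness $d<1$ that lets $d^2$ be dominated by $|Y|$.
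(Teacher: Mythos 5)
Your proposal is correct and follows essentially the same route as the paper, which obtains the proposition directly by combining Lemma \ref{lem estimates of D_tv}, Proposition \ref{prop estimates the time derivative of P tilde F_* - F_*}, Proposition \ref{eqn estimates the time derivative of tilde Gamma - Gamma} and Proposition \ref{eqn estimates the time derivative of nabla tilde Gamma - Gamma} and absorbing each term into $|X|$, $|\nabla X|$ or $|Y|$. Your explicit remark that $d^2\le d=|v|\le |Y|$ (using \eqref{eqn d < min given bar epsilon small}) correctly supplies the one step the paper leaves implicit.
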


Next we estimate the terms $( \partial _t - \Delta^f) X$, or more precisely, 
$$ ( \partial_t- \Delta^f) (\widetilde \nabla^k \widetilde A - \nabla^k A)$$
for $k=0, 1$. 

\begin{prop} \label{prop estimate first order derivative of widetilde A and A}
    For any $k=0,1,2, \cdots$,
    \begin{align} \label{eqn estimate of widetilde nabla^k widetilde A - nabla^k A}
        |( \partial_t - \Delta^f)(\widetilde \nabla^k \widetilde A - \nabla^k A) | &\le C_{k+1}\bigg(d + |P\widetilde F_* - F_*| + |\widetilde \Gamma - \Gamma| + |\nabla(\widetilde \Gamma - \Gamma)| \\ &\ \ \ + \sum_{i=0}^k|\widetilde \nabla^i \widetilde A- \nabla^i A| + |\nabla(\widetilde \nabla^k \widetilde A - \nabla^k A)|\bigg).  \notag
    \end{align}
\end{prop}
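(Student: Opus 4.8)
The plan is to prove (\ref{eqn estimate of widetilde nabla^k widetilde A - nabla^k A}) by induction on $k$, using the evolution equation (\ref{eqn evolution equation for nabla^k A}) from Proposition \ref{prop evolution equation of nabla^k A} applied to both $F$ and $\widetilde F$, and then carefully subtracting the two equations. The crucial conceptual point is that $\nabla^k A$ and $\widetilde\nabla^k\widetilde A$ evolve under two \emph{different} operators $\Delta^f = \Lambda^{ij}\nabla_i\nabla_j$ and $\widetilde\Delta^{\tilde f} = \widetilde\Lambda^{ij}\widetilde\nabla_i\widetilde\nabla_j$, so before subtracting I would rewrite $(\partial_t - \widetilde\Delta^{\tilde f})\widetilde\nabla^k\widetilde A$ as $(\partial_t - \Delta^f)\widetilde\nabla^k\widetilde A + (\widetilde\Delta^{\tilde f} - \Delta^f)\widetilde\nabla^k\widetilde A$, where the operator-difference term is
\begin{equation*}
    (\widetilde\Delta^{\tilde f} - \Delta^f)\widetilde\nabla^k\widetilde A = (\widetilde\Lambda^{ij} - \Lambda^{ij})\widetilde\nabla_i\widetilde\nabla_j\widetilde\nabla^k\widetilde A + \Lambda^{ij}\big(\widetilde\nabla_i\widetilde\nabla_j - \nabla_i\nabla_j\big)\widetilde\nabla^k\widetilde A.
\end{equation*}
The first piece is controlled by Lemma \ref{lem eqn estimate of difference of partial^k Q(A,g)} (with the $k=1$ case giving $|\widetilde\Lambda - \Lambda| \le C(|\widetilde A - A| + |P\widetilde F_* - F_*|)$) times $|\widetilde\nabla^{k+2}\widetilde A|$, which is bounded on $[e,T]$ by Proposition \ref{prop higher order curvature estimates on [e, T]}; the second piece expands via $\widetilde\nabla - \nabla = \widetilde\Gamma - \Gamma$ and $\nabla(\widetilde\nabla - \nabla) = \nabla(\widetilde\Gamma - \Gamma)$ acting on $\widetilde\nabla^{\le k+1}\widetilde A$, again bounded using Proposition \ref{prop higher order curvature estimates on [e, T]}, producing the $|\widetilde\Gamma - \Gamma|$ and $|\nabla(\widetilde\Gamma - \Gamma)|$ terms on the right-hand side of (\ref{eqn estimate of widetilde nabla^k widetilde A - nabla^k A}).

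Once both quantities satisfy equations with the \emph{same} leading operator $\partial_t - \Delta^f$, I would subtract and organize the difference of the right-hand sides of (\ref{eqn evolution equation for nabla^k A}). The principal term $\partial^2_h Q(A,g) * \nabla A * \nabla^{k+1}A$ and its tilded counterpart differ by a sum in which each factor is replaced one at a time: the $\partial^2_h Q$ difference is handled by Lemma \ref{lem eqn estimate of difference of partial^k Q(A,g)}, the $\nabla A$ difference contributes $|\widetilde\nabla\widetilde A - \nabla A| \le C(|\widetilde A - A| + |\nabla(\widetilde A - A)| + |\widetilde\Gamma - \Gamma|)$ (using $\widetilde\nabla - \nabla = \widetilde\Gamma - \Gamma$ on $A$), and the $\nabla^{k+1}A$ difference gives exactly the $|\nabla(\widetilde\nabla^k\widetilde A - \nabla^k A)|$ term after commuting one derivative past, with lower-order leftovers absorbed. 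The lower-order term $\mathcal L_k$, which by Proposition \ref{prop evolution equation of nabla^k A} is a linear combination of $\mathscr F * \nabla^{i_1}A * \cdots * \nabla^{i_j}A$ with $\mathscr F$ built from $(\partial^l_h Q(A,g))_{l\le k+2}$, $g^{-1}$ and pullbacks $F^*(\iota^m_{\mathfrak n}\bar\nabla^j\overline R)$, is treated the same way: the $\mathscr F$-difference splits into a $\partial^l_h Q$ difference (Lemma \ref{lem eqn estimate of difference of partial^k Q(A,g)}), a $g^{-1}$ difference (parts (2),(3) of Lemma \ref{lem basic inequality from Lee Ma}), and a difference of the ambient-curvature pullbacks, which is controlled by $d$ via (\ref{eqn estimtes difference of restrictions under parallel transport}) together with Lemma \ref{lem P tilde n-n le Ptilde F_* - F_*} to handle the $\mathfrak n$ versus $\tilde{\mathfrak n}$ and the $F_*$ versus $\widetilde F_*$ discrepancies; the $\nabla^{i_s}A$-differences each contribute $|\widetilde\nabla^{i_s}\widetilde A - \nabla^{i_s}A|$ with $i_s \le k$, which for $i_s \ge 1$ are further reduced to $\sum_{i\le k}|\widetilde\nabla^i\widetilde A - \nabla^i A| + |\widetilde\Gamma - \Gamma| + |\nabla(\widetilde\Gamma - \Gamma)|$ by repeatedly pulling out $\widetilde\nabla - \nabla$ factors (this is where an inner induction on $i_s$, or the induction hypothesis on smaller $k$, is used).

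The main obstacle I expect is bookkeeping rather than any single hard estimate: one must ensure that every term produced by the subtraction is either (a) of the schematic form $[\text{difference of a geometric quantity}] \times [\text{a quantity bounded on } M\times[e,T]]$, where the bounded factor comes from Proposition \ref{prop higher order curvature estimates on [e, T]} and the compactness statements (\ref{eqn Lambda is equivalent to g}), (\ref{eqn partial^l_h Q uniformly bounded by Q_k}), or (b) directly one of the allowed terms $d$, $|P\widetilde F_* - F_*|$, $|\widetilde\Gamma - \Gamma|$, $|\nabla(\widetilde\Gamma - \Gamma)|$, $\sum_{i\le k}|\widetilde\nabla^i\widetilde A - \nabla^i A|$, $|\nabla(\widetilde\nabla^k\widetilde A - \nabla^k A)|$. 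A subtle point is that the highest-derivative term $\widetilde\nabla^{k+2}\widetilde A$ genuinely appears (from both the operator-difference and the principal term of $\mathcal L$ at order $k$), but it only ever multiplies a \emph{difference} factor such as $\widetilde\Lambda - \Lambda$ or $\widetilde\Gamma - \Gamma$ that is itself one of the allowed terms — so the estimate closes provided $k+1 \le$ the order up to which $\overline R$ is assumed bounded, which is why the constant $C_{k+1}$ depends on $B_0,\dots,B_{k+2}$ and on $e$. Finally, since all the comparison estimates of sections 4 and 5 are valid only on $[T-\bar\epsilon,T]$ (where the parallel transport and the local charts of the Lemma before Lemma \ref{lem eqn estimate of difference of partial^k Q(A,g)} are defined), the inequality (\ref{eqn estimate of widetilde nabla^k widetilde A - nabla^k A}) is asserted on that interval; for the application in Proposition \ref{prop ODE ineq for Y} and the main theorems only the cases $k=0,1$ are needed, so the induction is short.
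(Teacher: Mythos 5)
Your proposal is correct and follows essentially the same route as the paper: the same splitting into $[(\partial_t-\widetilde\Delta^{\tilde f})\widetilde\nabla^k\widetilde A-(\partial_t-\Delta^f)\nabla^k A]+(\widetilde\Delta^{\tilde f}-\Delta^f)\widetilde\nabla^k\widetilde A$, the operator difference controlled via Lemma \ref{lem eqn estimate of difference of partial^k Q(A,g)} and $\widetilde\Gamma-\Gamma$, $\nabla(\widetilde\Gamma-\Gamma)$, the term-by-term comparison of the evolution equations using multilinearity, the ambient-curvature pullbacks handled through the parallel transport estimates (\ref{eqn estimtes difference of restrictions under parallel transport}) and Lemma \ref{lem P tilde n-n le Ptilde F_* - F_*}, and the top-order difference rewritten as $(\widetilde\nabla-\nabla)\widetilde\nabla^k\widetilde A+\nabla(\widetilde\nabla^k\widetilde A-\nabla^k A)$. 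The only deviation is organizational: the paper treats each $k$ directly (no induction) since the lower-order differences $|\widetilde\nabla^i\widetilde A-\nabla^i A|$, $i\le k$, are already allowed terms on the right-hand side, so your proposed outer/inner induction is unnecessary but harmless.
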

\begin{proof}
    We start with the equation 
     \begin{align} \label{eqn estiamte of (partial t - Lambda)(widetilde nabla wildetilde A - nabla A)}
         (\partial_t - \Delta^f)(\widetilde \nabla^k \widetilde A - \nabla^k A) &= [ (\partial_t - \widetilde \Delta^{\tilde f} )\widetilde \nabla^k \widetilde A - (\partial_t - \Delta^f)\nabla^k A] + (\widetilde \Delta^{\tilde f} - \Delta^f)\widetilde \nabla^k \widetilde A \\& = (i) + (ii) \notag . 
     \end{align}
     Note that 
    \begin{align} \label{eqn difference of Delta's}
     \widetilde \Delta^{\tilde f} - \Delta^f &= \frac{\partial Q}{\partial h_{ij}} (\widetilde A, \tilde g) \widetilde\nabla_i\widetilde\nabla_j - \frac{\partial Q}{\partial h_{ij}} (A, g) \nabla_i\nabla_j \\
     &=\left(\frac{\partial Q}{\partial  h_{ij}}(\widetilde A, \tilde g) - \frac{\partial Q}{\partial h_{ij}} (A, g)\right)\widetilde\nabla_i\widetilde\nabla_j \notag \\ 
     &\ \ \ + \frac{\partial Q}{\partial h_{ij}} (A, g) ((\widetilde\nabla_i - \nabla_i)\widetilde \nabla_j + \nabla_i(\widetilde\nabla_j - \nabla_j)),\notag
    \end{align}
by (\ref{eqn estimate of difference of partial^k Q(A,g)}) with $k=1$ we have the estimates
     \begin{align} \label{eqn estimate of (ii)}
    |(ii)| \le C_{k+2} (|P\widetilde F_* - F_*| + |\widetilde A - A| + |\nabla(\widetilde \Gamma - \Gamma)| + |\widetilde \Gamma - \Gamma|). 
\end{align}
Next we estimate $(i)$. By Proposition \ref{prop evolution equation of nabla^k A}, 
$$ (\partial_t -\Delta^f) \nabla ^k A = \mathscr F_k, $$
where 
$$\mathscr F_k = \mathscr F_k ((\partial^l_hQ(A, g))_{l=0,1,\cdots,k+2}, g^{-1}, (\nabla^i A)_{i=0, 1, \cdots, k+1}, ( F^* (\iota_{\tilde{\mathfrak n}}^m\bar\nabla^j \overline R))_{j,m=0, 1, \cdots, k+1})$$
Hence 
\begin{align*}
    (i) &=  \mathscr F_k ((\partial^l_hQ(\widetilde A,\tilde g))_{l=0,1,\cdots,k+2}, \tilde g^{-1}, (\widetilde\nabla^i \widetilde A)_{i=0, 1, \cdots, k+1}, (\widetilde F^* (\iota_{\tilde{\mathfrak n}}^m\bar\nabla^j \overline R))_{j,m=0, 1, \cdots, k+1} ) \\
    &\ \ \ \ -\mathscr F_k ((\partial^l_hQ(A,g))_{l=0,1,\cdots,k+2}, g^{-1}, (\nabla^i A)_{i=0, 1, \cdots, k+1}, (F^*(\iota_{\mathfrak n}^m\bar\nabla^j \overline R))_{j,m=0, 1, \cdots, k+1} ). 
\end{align*}
Since $\mathscr F_k$ is multi-linear, one has 
\begin{align}\label{eqn first estimate of (i)}
    |(i)| &\le C_{k+1} \bigg( \sum_{l=0}^{k+2}|\partial^l_hQ(\widetilde A,\tilde g) - \partial^l_hQ(A,g)|+ |\tilde g^{-1} - g^{-1} | + \sum_{i=0}^{k+1} | \widetilde\nabla^i \widetilde A - \nabla^i A |  \\
    &\ \ \  + \sum_{j,m=0}^{k+1} \big|\widetilde F^* (\iota_{\tilde{\mathfrak n}}^m\bar\nabla^j \overline R) - F^*(\iota_{\mathfrak n}^m\bar\nabla^j \overline R)\big|\bigg). \notag
\end{align}

First, we estimate the last term on the right hand side. When $m=0$, the terms measures the difference between the pullback of an ambient tensor $\overline \nabla ^j \overline R$ along two maps $F, \widetilde F$. When $j=0$, one can rewrite the difference as 

\begin{align*}
    &(\widetilde F^*\bar R)_{ijkl} - (F^* \bar R)_{ijkl} \\&=  \bar R|_{\widetilde F}(\widetilde F_i,\widetilde F_j,\widetilde F_k,\widetilde F_l) - \bar R|_F(F_i,F_j,F_k,F_l) \notag \\ 
    &= P^* \bar R|_F (P \widetilde F_i,P \widetilde F_j, P \widetilde F_k, P \widetilde F_l) - \bar R(F_i, F_j, F_k, F_l)  \notag \\
    &=P^*(\bar R|_{\widetilde F})(P\widetilde F_i - F_i,P\widetilde F_j,P\widetilde F_k,P\widetilde F_l) + P^*\bar R|_{\widetilde F}(F_i,P\widetilde F_j - F_j,P\widetilde F_k,P\widetilde F_l) \notag \\ 
    &\ \ \ + P^*\bar R|_{\widetilde F}(F_i,F_j,P\widetilde F_k - F_k,P\widetilde F_l) + P^*\bar R|_{\widetilde F}(F_i,F_j,F_k,P\widetilde F_l - F_l) \notag \\ 
    &\ \ \ + (P^*\bar R|_{\widetilde F} - \bar R|_F)(F_i,F_j,F_k,F_l). \notag
\end{align*}
One can deal with the term $\widetilde F^*(\overline\nabla ^j\overline R) - F^* (\overline \nabla ^j \overline R)$ similarly, and by applying (1) in Lemma \ref{lem basic inequality from Lee Ma} and (\ref{eqn estimtes difference of restrictions under parallel transport}), we have 
\begin{equation} \label{eqn difference of Riemannian curvature tensors I}
|\widetilde F^*(\bar\nabla ^j  \overline R) - F^* (\overline \nabla ^j \overline R)|\le C( B_j |P\widetilde F_* - F_*| + B_{j+1} d). 
\end{equation}

When $m>0$, $F^* (\iota^m_{\mathfrak n} \bar\nabla ^j \bar R)$ is not a pullback of ambient tensors, however the estimates is similar:  e.g. for $m=1$ and $j=0$, 
\begin{align*} 
    &(\widetilde F^*(\iota_{\tilde{\mathfrak n}}\bar R))_{ijk} - (F^*(\iota_{\mathfrak n}\bar R))_{ijk} \\&=  \bar R|_{\widetilde F}(\tilde{\mathfrak n},\widetilde F_i,\widetilde F_j,\widetilde F_k) - \bar R|_F(\mathfrak n,F_i,F_j,F_k) \notag \\
    &= P^* \bar R|_{\widetilde F}(P\tilde{\mathfrak n},P\widetilde F_i,P\widetilde F_j,P\widetilde F_k) - \bar R|_F(\mathfrak n,F_i,F_j,F_k) \notag \\
    &=P^*\bar R|_{\widetilde F}(P\tilde{\mathfrak n} - \mathfrak n,P\widetilde F_i,P\widetilde F_j,P\widetilde F_k) + P^*\bar R|_{\widetilde F}(\mathfrak n,P\widetilde F_i - F_i,P\widetilde F_j,P\widetilde F_k) \notag \\ &\ \ \ + P^*\bar R|_{\widetilde F}(\mathfrak n,F_i,P\widetilde F_j - F_j,P\widetilde F_k) + P^*\bar R|_{\widetilde F}(\mathfrak n,F_i,F_j,P\widetilde F_k - F_k) \notag \\ &  \ \ \ + (P^*\bar R|_{\widetilde F} - \bar R|_F)(\mathfrak n,F_i,F_j,F_k). \notag
\end{align*}
Arguing similarly for $j>0$ and use (1) in Lemma \ref{lem basic inequality from Lee Ma} and Lemma \ref{lem P tilde n-n le Ptilde F_* - F_*}, one obtains
\begin{equation} \label{eqn difference of Riemannian curvature tensors II}
|\widetilde F^*(\iota^m_{\tilde{\mathfrak n}}\overline \nabla ^j  \overline R) - F^* (\iota^m_{\mathfrak n} \overline \nabla ^j \overline R)|\le C( B_j|P\widetilde F_* - F_*| + B_{j+1} d). 
\end{equation}

Next, the term $\widetilde \nabla^{k+1} \widetilde A - \nabla^{k+1} A$ can be written as 
\begin{equation*}
    \widetilde \nabla^{k+1} \widetilde A - \nabla^{k+1} A = (\widetilde \nabla - \nabla)\widetilde \nabla^k \widetilde A + \nabla(\widetilde \nabla^k \widetilde A - \nabla^kA).
\end{equation*}
Together with (\ref{eqn first estimate of (i)}), (\ref{eqn difference of Riemannian curvature tensors I}), (\ref{eqn difference of Riemannian curvature tensors II}), and (\ref{eqn estimate of difference of partial^k Q(A,g)}) one has
\begin{align} \label{eqn second estimate of (i)}
    |(i)| &\le C_{k+1}\bigg(|\tilde g - g| + |\tilde{g}^{-1} - g^{-1}| + |\widetilde \Gamma - \Gamma| \\ &+ \sum_{i=0}^{k}|\widetilde \nabla^i \widetilde A - \nabla^i A| + |\nabla(\widetilde \nabla^k \widetilde A - \nabla^k A)| + |P\widetilde F_* - F_*| + d \bigg). \notag
\end{align}
Now by Lemma \ref{lem basic inequality from Lee Ma}, (\ref{eqn estimate of (ii)}) and (\ref{eqn second estimate of (i)}) one obtains (\ref{eqn estimate of widetilde nabla^k widetilde A - nabla^k A}).
\end{proof}

Proposition \ref{prop estimate first order derivative of widetilde A and A} implies the following inequality.
\begin{prop} \label{prop PDE inequality for X}
    The section $X$ satisfies the PDE inequality
    \begin{equation} \label{eqn PDE inequality for X}
        \left | \left(\partial_t ^{\mathcal X} -\Delta^f \oplus \Delta^f \right) X \right | \le C_2(|X| + |\nabla X| + |Y|),
    \end{equation}
\end{prop}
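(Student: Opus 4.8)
The plan is to deduce Proposition~\ref{prop PDE inequality for X} directly from Proposition~\ref{prop estimate first order derivative of widetilde A and A} by specializing to $k=0$ and $k=1$ and then absorbing all the ``extra'' terms on the right-hand side into $|X|$, $|\nabla X|$ and $|Y|$. Recall that
\[
X = (\widetilde A - A) \oplus (\widetilde \nabla \widetilde A - \nabla A), \qquad Y = v \oplus (P\widetilde F_* - F_*) \oplus (\widetilde \Gamma - \Gamma) \oplus \nabla(\widetilde \Gamma - \Gamma),
\]
and that $\partial_t^{\mathcal X} - \Delta^f \oplus \Delta^f$ acts componentwise as $(\partial_t - \Delta^f)(\widetilde A - A)$ on the first factor and $(\partial_t - \Delta^f)(\widetilde \nabla \widetilde A - \nabla A)$ on the second.

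First I would apply Proposition~\ref{prop estimate first order derivative of widetilde A and A} with $k=0$ to bound the first component: this gives
\[
|(\partial_t - \Delta^f)(\widetilde A - A)| \le C_1\big(d + |P\widetilde F_* - F_*| + |\widetilde \Gamma - \Gamma| + |\nabla(\widetilde \Gamma - \Gamma)| + |\widetilde A - A| + |\nabla(\widetilde A - A)|\big).
\]
Then I would apply it again with $k=1$ to bound the second component:
\[
|(\partial_t - \Delta^f)(\widetilde \nabla \widetilde A - \nabla A)| \le C_2\big(d + |P\widetilde F_* - F_*| + |\widetilde \Gamma - \Gamma| + |\nabla(\widetilde \Gamma - \Gamma)| + |\widetilde A - A| + |\widetilde \nabla \widetilde A - \nabla A| + |\nabla(\widetilde \nabla \widetilde A - \nabla A)|\big).
\]
Adding these and recognizing the right-hand terms: $|\widetilde A - A| + |\widetilde \nabla \widetilde A - \nabla A| \lesssim |X|$, the gradient terms $|\nabla(\widetilde A - A)| + |\nabla(\widetilde \nabla \widetilde A - \nabla A)| \lesssim |\nabla X|$, and $|P\widetilde F_* - F_*| + |\widetilde \Gamma - \Gamma| + |\nabla(\widetilde \Gamma - \Gamma)| \lesssim |Y|$. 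The only term not yet of the desired form is $d = |v|$ (the length of the geodesic joining $F$ to $\widetilde F$, which is precisely $|v|$ since $\gamma(s) = \exp_p(sv)$ is the unit-speed-rescaled geodesic), and $|v| \le |Y|$ as well since $v$ is the first summand of $Y$. Choosing $C_2$ to be the larger of the constants appearing (and renaming), we obtain (\ref{eqn PDE inequality for X}).

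The main subtlety—though it is not really an obstacle since it is already handled inside Proposition~\ref{prop estimate first order derivative of widetilde A and A}—is that the constant in the $k=1$ estimate depends on one more order of the ambient curvature bounds than the $k=0$ estimate; taking $C_2$ (with its stated dependence on $B_0,\dots,B_3$) covers both, which is consistent with the constant in Proposition~\ref{prop ODE ineq for Y} so that both inequalities can be fed simultaneously into Kotschwar's theorem. One should also note that the identification $d = |v|$ uses the convention from (\ref{eqn dfn of smooth homotopy gamma}) that $v$ is the initial velocity of the minimizing geodesic parametrized on $[0,1]$, so $|v|_{\bar g} = \operatorname{length}(\gamma) = d_{\overline M}(F,\widetilde F) = d$; with this, every term on the right-hand side of the $k=0,1$ estimates has been accounted for and the proof is complete.
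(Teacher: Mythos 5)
Your proposal is correct and follows the same route as the paper, which obtains Proposition \ref{prop PDE inequality for X} directly from Proposition \ref{prop estimate first order derivative of widetilde A and A} applied with $k=0,1$, absorbing the curvature-difference and Christoffel terms into $|X|$, $|\nabla X|$, $|Y|$ and the distance term via $d=|v|\le |Y|$. The only difference is that you spell out the bookkeeping (including the identification $d=|v|$ and the constant dependence) which the paper leaves implicit.
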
 

After all the preparations we are ready to prove Theorem \ref{thm backward uniqueness of extrinsic geometric flow}.

\begin{proof}[Proof of Theorem \ref{thm backward uniqueness of extrinsic geometric flow} and Theorem \ref{thm backward uniqueness of convex extrinsic geometric flow with concave speed}] 

We employ \cite[Theorem 3]{KotschwarBU} for the interval $[T-\bar\epsilon, T]$. Let $\tau = T - t$. Define $X_1(\tau) := X(\tau)$, $Y_1(\tau) := Y(\tau)$. First of all, by (\ref{eqn uniform equivalence of metrics}) and the completeness of $g(T)$, $g(\tau)$ is a complete metric on $M$ for all $\tau$. Note also that $\Lambda^{ij} = L_Q^{ij} (h, g)$ satisfies 
$$ \Lambda^{ij} \ge \theta g^{ij}. $$
for some positive $\theta$ by (\ref{eqn Lambda is equivalent to g}).

Next we check that (8) in \cite[Theorem 3]{KotschwarBU} is satisfied. First, denote $b=\partial_t g$. Then by (1) in Lemma \ref{lem basic evolution of extrinsic flow}, (\ref{eqn evolution of A under extrinsic flow}) and the bounds (\ref{eqn |bar nabla^k bar R| le B_k}), $|b|+ |\partial_\tau b|$ is uniformly bounded. 

The Riemann curvature tensors $R$ associated with the induced metrics $g_t$ are uniformly bounded by the Gauss equation, (\ref{eqn |A| le L}) and bounds on $\overline R$. Hence (9) in \cite[Theorem 3]{KotschwarBU} is satisfied, and $|\Lambda|$, $|\nabla \Lambda|$, $|\partial_t \Lambda|$ are uniformly bounded. As in \cite[Theorem 3]{KotschwarBU}, denote $\hat \nabla$ the connections induced on $\mathcal X$, then the bracket $[\partial_t, \hat \nabla] := \partial_t\hat\nabla - \hat\nabla\partial_t$ is a smooth section on the bundle $T^*M \oplus End(\mathcal X)$. Again by compactness $|[\partial_t, \hat \nabla]|$ is bounded. Then the inequality (8) in \cite[Theorem 3]{KotschwarBU} holds. By Proposition \ref{prop ODE ineq for Y} and Proposition \ref{prop PDE inequality for X}, $X_1$ and $Y_1$ satisfy the ODE-PDE inequalities (10) in \cite[Theorem 3]{KotschwarBU} on $[0,\bar\epsilon]$, here $\bar \epsilon$ is defined in section 3. By the backward uniqueness theorem \cite[Theorem 3]{KotschwarBU}, $X_1$ and $Y_1$ vanish on $[0,\bar\epsilon]$, that is $X$ and $Y$ vanish on $M^n \times [T-\bar \epsilon,T]$. Since $\bar \epsilon$ is independent of $T$, using the same argument we can show that $X, Y$ also vanish on $[T-2\bar\epsilon,T-\bar\epsilon]$. Argue inductively, we have shown that $X$ and $Y$ vanish identically on $M\times [e,T]$. In particular $d \equiv 0$ and hence $F = \widetilde F$ on $[e, T]$. Since $e <T-\bar\epsilon$ is arbitrary and $F, \widetilde F$ are continuous, we have $F = \widetilde F$ on $M\times [0,T]$ and this finishes the proof of Theorem \ref{thm backward uniqueness of extrinsic geometric flow} and Theorem \ref{thm backward uniqueness of convex extrinsic geometric flow with concave speed}. 
\end{proof}

\section{The compact case}
When $M$ is compact, it is probably of no surprise we can drop all assumptions on $(\overline M, \bar g)$ to prove the backward uniqueness:

\begin{thm} \label{thm backward uniqueness compact case}
    Let $(\overline M, \bar g)$ be an oriented $(n+1)$-dimensional Riemannian manifold, and let $M$ be a compact oriented $n$-dimensional manifold. Assume that $F , \widetilde F : M \times [0,T]\to \overline M$ are two solutions to the extrinsic geometric flow (\ref{eqn extrinsic flow}), where the curvature function is smooth, symmetric and monotone. If $F(\cdot, T) = \widetilde F(\cdot, T)$, then $F = \widetilde F$ on $M\times [0,T]$.  
\end{thm}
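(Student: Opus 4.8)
The plan is to reduce Theorem~\ref{thm backward uniqueness compact case} to Theorem~\ref{thm backward uniqueness of extrinsic geometric flow}. The point is that when $M$ is compact every hypothesis of Theorem~\ref{thm backward uniqueness of extrinsic geometric flow} holds automatically, once we first replace $(\overline M,\bar g)$ by a complete ambient manifold with bounded geometry that agrees with $\bar g$ on a neighborhood of the (compact) traces of $F$ and $\widetilde F$.

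First I would dispose of the intrinsic hypotheses. Since $F,\widetilde F:M\times[0,T]\to\overline M$ are smooth and $M\times[0,T]$ is compact, the second fundamental forms and their first covariant derivatives are uniformly bounded, so (\ref{eqn |A| le L}) and (\ref{eqn |nabla A| le L}) hold for some $L>0$; in particular condition (I) of section~4 holds via its first alternative. Likewise the set of all principal-curvature vectors $\kappa(x,t)$ and $\tilde\kappa(x,t)$, $(x,t)\in M\times[0,T]$, is a compact subset of the open set $\Omega$ (it lies in $\Omega$ because $q(\kappa)$, $q(\tilde\kappa)$ are defined along the flow), hence has positive distance to $\partial\Omega$, giving (\ref{eqn dist (kappa, partial Omega) > delta}) for some $\delta>0$. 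Finally the induced metric $g(T)$ on the compact manifold $M$ is complete, so $F(\cdot,T)=\widetilde F(\cdot,T)$ is a complete immersion.

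Next I would arrange the ambient hypotheses. Let $K=F(M\times[0,T])\cup\widetilde F(M\times[0,T])$, a compact subset of $\overline M$. By a standard construction (take a relatively compact open neighborhood $W$ of $K$, a proper smooth exhaustion $\phi$ of $W$, and a regular value $c>\max_K\phi$) choose relatively compact open sets with smooth boundary and $K\subset U\Subset V\Subset\overline M$. Let $\widehat M$ be the double of the compact manifold-with-boundary $\overline V$: it is a smooth closed oriented $(n+1)$-manifold in which one copy of $V$, and hence $U$, sits as an open set, with the induced orientation restricting to that of $\overline M$ on $U$. Equip $\widehat M$ with any smooth metric $\hat g$ that coincides with $\bar g$ on (the copy of) $U$, say $\hat g=\beta\,\bar g+(1-\beta)g_0$ for a bump function $\beta$ supported in $V$ with $\beta\equiv1$ on $U$ and any auxiliary metric $g_0$ on $\widehat M$. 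Being compact, $(\widehat M,\hat g)$ is complete, has positive injectivity radius, and satisfies $|\overline\nabla^j\widehat R|\le B_j$ for $j=0,1,2,3$. Since the flow equation (\ref{eqn extrinsic flow}) and the speed $f=q(\kappa)$ depend only on the ambient metric near the image, $F,\widetilde F:M\times[0,T]\to U\subset\widehat M$ are two solutions of the same extrinsic geometric flow in $(\widehat M,\hat g)$ with $F(\cdot,T)=\widetilde F(\cdot,T)$.

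At this point all hypotheses of Theorem~\ref{thm backward uniqueness of extrinsic geometric flow} are in force for $F,\widetilde F$ viewed in $(\widehat M,\hat g)$, and that theorem yields $F(\cdot,t)=\widetilde F(\cdot,t)$ for all $t\in[0,T]$. There is no substantial obstacle here: the statement is essentially a localization of Theorem~\ref{thm backward uniqueness of extrinsic geometric flow}, and the only points needing care are that the ambient surgery above does not disturb the flow (it does not, since everything used is local near the image) and that one may equally well bypass the surgery by re-running the proof of Theorem~\ref{thm backward uniqueness of extrinsic geometric flow} verbatim — every estimate in sections 3, 5 and 6 uses only the local curvature bounds of $\overline M$ over the compact image, and the parallel transport $P$ is needed only for $t$ near $T$, where $d_{\overline M}(F(x,t),\widetilde F(x,t))$ is uniformly small by compactness of $M$ and hence smaller than the injectivity radius, which is bounded below on the compact set $F(M,T)$.
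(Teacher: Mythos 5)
Your argument is correct, but it takes a genuinely different route from the paper. The paper does not perform any ambient surgery: its proof of Theorem \ref{thm backward uniqueness compact case} simply observes that, by compactness of $M\times[0,T]$, one gets (\ref{eqn |A| le L}), (\ref{eqn |nabla A| le L}), (\ref{eqn dist (kappa, partial Omega) > delta}) and all higher derivative bounds, a uniform lower bound on the exponential map's injectivity radius along the image of $F$, and hence a $\bar\epsilon$ for which the homotopy (\ref{eqn dfn of smooth homotopy gamma}) and the parallel transport are defined; it then notes that every estimate in sections 3--6 (in particular (\ref{eqn estimtes difference of restrictions under parallel transport}) and the ODE--PDE inequalities) only needs curvature bounds $B_k$ on the image of the homotopy, so the argument of Theorem \ref{thm backward uniqueness of extrinsic geometric flow} runs verbatim with these local bounds --- exactly the ``bypass the surgery'' alternative you sketch at the end. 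Your primary route instead reduces to Theorem \ref{thm backward uniqueness of extrinsic geometric flow} as a black box: you isometrically place a neighborhood $U$ of the compact set $K=F(M\times[0,T])\cup\widetilde F(M\times[0,T])$ inside the double of a compact domain $\overline V$, glue a metric $\hat g$ agreeing with $\bar g$ on $U$, and note that the closed manifold $(\widehat M,\hat g)$ automatically satisfies completeness, positive injectivity radius and the bounds $|\overline\nabla^j\widehat R|\le B_j$, while the flow equation, the second fundamental forms, $\kappa$, $\tilde\kappa$ and the terminal coincidence are unchanged since they depend only on $\hat g|_U=\bar g|_U$; Theorem \ref{thm backward uniqueness of extrinsic geometric flow} then gives $F=\widetilde F$ in $\widehat M$, hence in $\overline M$. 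The checks you need (orientability of the double and compatibility of orientations on $U$, positivity of the interpolated metric, and the purely local character of $A$, $\mathfrak n$ and $f=q(\kappa)$) are all routine, so the reduction is sound. What each approach buys: your extension trick avoids re-auditing the proof of the non-compact theorem and keeps the logic modular, at the cost of an auxiliary gluing construction; the paper's localization avoids any modification of the ambient manifold and makes transparent that no global hypotheses on $(\overline M,\bar g)$ enter, at the cost of having to verify that only local curvature bounds over the (image of the) homotopy are used in sections 3--6.
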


\begin{proof} [Sketch of proof] (\ref{eqn |A| le L}), (\ref{eqn |nabla A| le L}) and (\ref{eqn dist (kappa, partial Omega) > delta}) are satisfied since $M$ is compact. Indeed, by the compactness of $M\times [0,T]$, we have all higher derivative estimates of $A$, $\widetilde A$.

Again, by the compactness of $M$ there is $i_0 >0$ such that for all $(x, t) \in M \times [0,T]$, the exponential map of $(\overline M, \bar g)$ at $p = F(x, t)$ is a diffeomorphism from $\{ X\in T_pM : |X|_{\bar g} < i_0 \}$ onto its image. Since (\ref{eqn |A| le L}) and (\ref{eqn dist (kappa, partial Omega) > delta}) imply that $f$ is uniformly bounded, there is $\bar\epsilon >0$ so that (\ref{eqn d < min given bar epsilon small}) is satisfied on $[T-\bar\epsilon, T]$. Again using (\ref{eqn higher order gradient estimates of h}) we may choose $\bar\epsilon>0$ so that (\ref{eqn (h, g), (tilde h, tilde g) lies in the same balls}) is also satisfied. 

By our choice of $i_0$ and $\bar\epsilon$, the smooth homotopy $\gamma$ in (\ref{eqn dfn of smooth homotopy gamma}) is well-defined on $M\times [T-\bar\epsilon, T]$. Choosing a larger $B_k$ if necessary, we may assume that 
$$ |\overline\nabla^k \overline R| \le B_k$$
on the image of $\gamma$. These local bounds are sufficient to derive (\ref{eqn estimtes difference of restrictions under parallel transport}), which is then used to prove the time derivatives in section 5, and eventually the ODE-PDE inequalities (\ref{prop ODE ineq for Y}), (\ref{eqn PDE inequality for X}) in section 6. 
\end{proof}

\bibliographystyle{abbrv}

\vspace{0.75cm}

    \noindent Dasong Li\\
    Department of Mathematics, Southern University of Science and Technology \\
    No 1088, xueyuan Rd., Xili, Nanshan District, Shenzhen, Guangdong, China 518055 \\
    email: 12332876@mail.sustech.edu.cn

\bigskip

    \noindent John Man Shun Ma  \\
    Department of Mathematics, Southern University of Science and Technology \\
    No 1088, xueyuan Rd., Xili, Nanshan District, Shenzhen, Guangdong, China 518055 \\
    email: hunm@sustech.edu.cn

\end{document}